\documentclass[12pt]{amsart}
\usepackage{latexsym}
\usepackage{amsmath,amsthm,amsfonts,amscd,eucal,amssymb}
\usepackage{epsfig}
\usepackage{enumerate}
\usepackage{xcolor}
\usepackage{tikz}


\addtolength{\textwidth}{4cm} 
\addtolength{\hoffset}{-2cm} 
\addtolength{\voffset}{-2cm} 
\addtolength{\textheight}{3cm} 

\vfuzz2pt 
\hfuzz2pt 
\newtheorem{thm}{Theorem}[section]
\newtheorem{cor}[thm]{Corollary}

\newtheorem{lem}[thm]{Lemma}

\newtheorem{prop}[thm]{Proposition}

\theoremstyle{definition}
\newtheorem{dfn}[thm]{Definition}
\theoremstyle{remark}
\newtheorem{rem}[thm]{Remark}
\newtheorem{ex}[thm]{Example}
\newtheorem*{ack}{Acknowledgments}

\numberwithin{equation}{section}

\def\ca{{\mathcal A}}
\def\cb{{\mathcal B}}

\def\ce{{\mathcal E}}

\def\ch{{\mathcal H}}

\def\cl{{\mathcal L}}

\def\car{{\mathcal R}}

\def\bc{{\mathbb C}}

\newcommand{\bn}{\mathbb N}
\def\br{{\mathbb R}}

\renewcommand{\a}{\alpha}
\def\b{\beta}
        
\def\d{\delta}        
\def\eps{\varepsilon}

\def\th{\vartheta}

\def\l{\lambda}

\def\x{\xi}

\def\r{\rho}
\def\s{\sigma}       
    
\def\t{\tau}

\def\f{\varphi}

\def\o{\omega}        


\DeclareMathOperator{\tr}{tr}
\DeclareMathOperator{\Res}{Res}

\DeclareMathOperator{\UHF}{UHF}

\DeclareMathOperator{\osc}{Osc}

\DeclareMathOperator{\diag}{diag}

\begin{document}

\title{A noncommutative Sierpinski Gasket}
\author{Fabio Cipriani}
\address{(F.C.) Politecnico di Milano, Dipartimento di Matematica, piazza Leonardo da Vinci 32, 20133 Milano, Italy.} \email{fabio.cipriani@polimi.it}
\author{Daniele Guido}
\address{(D.G.) Dipartimento di Matematica, Universit\`a di Roma ``Tor Vergata'', I--00133 Roma, Italy.} \email{guido@mat.uniroma2.it}
\author{Tommaso Isola}
\address{(T.I.) Dipartimento di Matematica, Universit\`a di Roma ``Tor Vergata'', I--00133 Roma, Italy} \email{isola@mat.uniroma2.it}
\author{Jean-Luc Sauvageot}
\address{(J.-L.S.) Institut de Math\'ematiques de Jussieu - Paris rive gauche, CNRS -  Universit\'e de Paris - Universit\'e Paris Sorbonne, F 75205 PARIS Cedex 13, France.}
\email{jean-luc.sauvageot@imj-prg.fr}


\subjclass[2010]{58B34, 28A80, 47D07, 46LXX}%
\keywords{Self-similar fractals,  Noncommutative geometry and spectral triples, Dirichlet forms.}%
\date{}

\begin{abstract}
A quantized version of the Sierpinski gasket is proposed, on purely topological grounds, as a $C^*$-algebra $\ca_\infty$ with a suitable form of self-similarity. Several properties of $\ca_\infty$ are studied, in particular its nuclearity, the structure of ideals as well as the description of irreducible representations and extremal traces. A harmonic structure is introduced, giving rise to a self-similar Dirichlet form $\mathcal{E}$. A spectral triple is also constructed, extending one already known for the classical gasket, from which $\mathcal{E}$ can be reconstructed. Moreover we show that $\ca_\infty$ is a compact quantum metric space.
\end{abstract}
\maketitle

\section{Introduction}

In this note we describe an example of a noncommutative fractal, more precisely a noncommutative analogue of a Sierpinski gasket.

By the particular structure of the Sierpinski gasket $K$, the self-similarity relation can be reformulated as a family of embeddings of finite-dimensional algebras, which give rise to a suitable infinite-dimensional $C^*$-algebra which turns out to consist of the continuous functions on $K$. Such procedure may be quantized, producing a noncommutative $C^*$-algebra $\ca_\infty$ with trivial centre which we call the algebra of the noncommutative Sierpinski gasket.

Such $C^*$-algebra contains $C(K)$ as a maximal abelian sub-algebra, and is contained in the UHF algebra $\UHF(3^\infty)=M_3(\bc)^{\otimes \bn}$. The irreducible representations of $\ca_\infty$ are completely described, and are either finite-dimensional or faithful. In the latter case they are restrictions of representations of the UHF algebra.

Then we show that the noncommutative Sierpinski gasket admits a harmonic structure, giving rise to a self-similar closed Dirichlet form on $\ca_\infty$ which restricts to the standard Dirichlet form on $C(K)$.

Finally, we build a spectral triple on $\ca_\infty$, which is strictly related to the spectral triple considered in \cite{GuIs16}. This triple has metric dimension equal to the Hausdorff dimension of the gasket $d=\frac{\log 3}{\log 2}$, and the Connes' trace formula gives back the restriction to $\ca_\infty$ of the unique trace on the UHF algebra, and coincides with the Hausdorff integral when computed on the continuous functions on $K$. The commutator with the Dirac operator produces a Lip-norm in the sense of Rieffel \cite{Rief}, hence $\ca_\infty$ is a compact quantum metric space. The Dirichlet form on $\ca_\infty$ may also be recovered through the spectral triple, according to a singular trace formula already used in \cite{CGIS1} and \cite{GuIs16}.

The Sierpinski gasket $K\subset\mathbb{R}^2$ is among the most studied self-similar fractal sets: it can be reconstructed as a whole from any arbitrary small piece of it. It is the only compact subset of $\br^2$ satisfying
\[
K=w_1(K)\cup w_2(K)\cup w_3(K)
\]
where the  $w_i$'s are the similitudes of scaling parameter $1/2$ fixing the vertices of an equilateral triangle in the plane. Its geometry is governed by the {\it pseudo semigroup} of local homeomorphisms generated by the $w_i$. It has been introduced by Sierpinski \cite{Sierpinski} as a somewhat paradoxical ``curve" whose points ramify. \\
During the 1970s and 1980s physicists \cite{RT,R} became interested in studying the Schr\"odinger equation on $K$ due to the appearance of {\it localized states}, thus suggesting that fractals like $K$ lie in between periodic crystalline and disordered structures (modeled by random potentials). In these works Laplacian-like operators and their spectra were only defined and analyzed as limits of Laplacians on approximating graphs (decimation method) (see however also \cite{BarKi}, \cite{BST}).\\

\noindent On the probabilistic side, $K$ started to be investigated in \cite{Ku1}, \cite{BarPer} and \cite{Gold} where self-similar diffusions where constructed. These processes, though  playing a role similar to the Brownian motion in the plane, exhibit unusual peculiarities summarized by different non integral Hausdorff, metric and random-walk dimensions.\\
However, since no open set in $K$ is homeomorphic to a Euclidean disk, the study of $K$ is challenging in several directions. Difficulties arise already at the topological level as the fundamental group $\pi_1(K)$ is locally compact but not discrete, in a suitable topology (\cite{ADTW}).\\

\noindent Hence $K$ does not admit a manifold structure so that differentiability of functions has to be introduced unconventionally (\cite{CiSa}, \cite{Cip}). This can be done by the analogue $\mathcal{E}$ of the Dirichlet integral on $K$, constructed by Kusuoka \cite{Ku} using probabilistic methods, and fully understood in the late 1980s within Kigami's theory of harmonic structures (\cite{Kiga}). This framework, allowing the use of the theory of Dirichlet forms to reconcile the approaches followed by probabilists and physicists, also provided a firm ground to study spectra of Laplacian-like operators (\cite{FS}, \cite{KiLa1, KiLa2}). The object of the theory is the construction and the study of self-similar Dirichlet forms \cite{FOT}. These are quadratic forms $\mathcal{E}$ on the algebra $C(K)$ of continuous functions, which on the one hand satisfy the fundamental contraction property of the Dirichlet integrals in Euclidean spaces on which the potential theory is based
\[
\mathcal{E}[u\wedge 1]\le\mathcal{E}[u]
\]
and, on the other hand, match the self-similar nature of $K$ in the sense that
\[
\mathcal{E}[u]=r_1^{-1}\cdot \mathcal{E}[u\circ w_1]+r_2^{-1}\cdot\mathcal{E}[u\circ w_2]+r_3^{-1}\cdot\mathcal{E}[u\circ w_3]
\]
for some fixed set of ``energy ratios" $r_1,r_2,r_3>0$. Starting from the differential structure underlying any regular Dirichlet form \cite{CiSa}, the potential theory on $K$ is then developed in such a way that analogues of the Hodge and de Rham theorems are available \cite{CGIS1} (see also \cite{SZSY}). Here a first peculiarity of $K$, compared with smooth compact Riemannian manifolds, is that the space of harmonic 1-forms is infinite dimensional. A second one is related to the fact that $K$ is not semilocally simply connected so that it does not admit a universal (simply connected) covering space and potentials of locally exact 1-forms have to be realized on the Uniform Universal Abelian Covering (\cite{PW}, \cite{CGIS1}). Finally, volume and energy distribution on the Sierpinski gasket can be described in the framework of Noncommutative Geometry by a suitable Dirac operator \cite{CGIS2} (see also \cite{CIL}, \cite{CIS}). In agreement with the discovery by Kusuoka and Hino that energy measures on $K$ associated to $\mathcal{E}$ are singular with respect to any self-similar Bernoulli measure (\cite{Ku}, \cite{Hino}), \cite{HiNa}, it results that the (non integer) dimensions of $K$ relative to the volume and energy distributions associated to $D$ differ (in contrast to the Riemannian situation).
\vskip0.2truecm

\noindent We now describe our idea of the construction of a noncommutative fractal modeled on the Sierpinski gasket.
\vskip0.2truecm\noindent
The self-similarity equation for a fractal means that the union of a finite number of (rescaled) copies of the space coincides with the space itself. Up to more or less trivial cases, the union is not disjoint, and, for the gasket, only involves finitely many vertices, so that the fractal can be completely reconstructed  by the projections from the disjoint union of vertices onto the non-disjoint one. From a functional point of view, this amounts to an embedding of finite-dimensional algebras, namely  the map $\psi_{n+1}^*:\bc(V_{n+1})\hookrightarrow\bc^3\otimes \bc(V_{n})$ (the space of complex valued functions on $V_n$). The family $\{\bc(V_n)\}_{n\geq0}$ is not an inductive family, however any of the algebras, viewed as an algebra of  cylindrical functions, naturally embeds  into the algebra of functions on the infinite product $\Sigma=\{0,1,2\}^\bn$ endowed with the product topology.
It turns out that the algebra of continuous functions on $K$ is a suitable limit of the $\bc(V_n)$'s seen as sub-algebras of $C(\Sigma)$.

\noindent As observed before, the maps $\psi_{n+1}^*$ are induced by surjections $\psi_{n+1}:V_n\sqcup V_n\sqcup V_n\to V_{n+1}$. Such maps are injective up to their restriction to the external vertices in $V_n$, the non-injectivity pattern being the same as that for the projection map $\psi_1:V_0\sqcup V_0\sqcup V_0\to V_{1}$. This means that the algebras $\bc(V_n)$ and the maps $\psi_n^*$, and hence the algebra $C(K)$, can be inductively reconstructed starting from $V_0$, $V_1$ and $\psi_1$.

\noindent The quantisation procedure consists in replacing the algebras $\bc(V_{n})$ with (generally) noncommutative finite-dimensional $C^*$-algebras $\ca_n$ endowed with embeddings
(which we denote with the same symbol) $\psi_{n+1}^*:\ca_{n+1}\hookrightarrow M_3(\bc)\otimes \ca_n$ (see Definition \ref{IndDef}). As in the commutative case, the algebras $\ca_n$ and the embeddings $\psi_n^*$ are determined inductively by $\ca_0$, $\ca_1$ and $\psi_1^*$. The base of the induction consists in posing $\ca_0=\bc(V_0)$, $\ca_1=\bc(V_1)$ and $\psi_1^*$ equal to its commutative counterpart. However, the replacement of $\bc^3$ with $M_3(\bc)$ allows $\ca_n$, $n\geq2$, to be noncommutative.
A visualization of the difference between the embeddings $\bc(V_{2})\hookrightarrow\bc^3\otimes \bc(V_{1})$ and $\ca_{2}\hookrightarrow M_3(\bc)\otimes \ca_1$ is shown by the Bratteli diagrams in Fig. \ref{fig:diagrams}.

\noindent Because of the inclusions $\psi_n^*$, all algebras $\ca_n$ naturally embed in $\UHF(3^\infty)=M_3(\bc)^{\otimes \bn}$. As in the classical case, the sequence $\{\ca_n\}_{n\geq0}$ is not inductive, however norm-convergent sections $n \in\bn \mapsto a_n\in\ca_n$ form a $C^*$-subalgebra of $\UHF(3^\infty)$ (see Theorem \ref{AinfAlgebra}), which we call the quantum gasket $\ca_\infty$.

\noindent Let us notice that the construction of $C(K)$ outlined above is purely topological, since it  uses neither the metric information nor the harmonic structure of the gasket, but only the topology of $\Sigma$. The same is true for the quantum gasket $\ca_\infty$, where the topological information is given by the embeddings of the $\ca_n$'s in $\UHF(3^\infty)$.


\noindent Once the $C^*$-algebra of quantum gasket is defined, we study its representations. A key step here is the existence of restriction maps $\r_n:\ca_\infty\to\ca_n$ which are $C^*$-epimorphisms. Such maps, which are the counterpart of the natural embeddings $V_n\to K$, turn out to describe all irreducible finite-dimensional representations of $\ca_\infty$.

\noindent In order to define the maps $\r_n$ we introduce a non-standard numbering of the vertices of the gasket, which gives rise to a labelling of the generators of $\ca_n$ expressed as tensor products of $3\times 3$ matrices. The map from $M_3(\bc)^{\otimes (n+1)}\to M_3(\bc)^{\otimes n}$, which gives 0 if the last tensor factor is not the matrix unit $e_{22}$ and erase the last tensor factor otherwise, turns out to restrict to a morphism $\ca_{n+1}\to\ca_n$. Combining such morphisms we produce the maps $\r_n$.
We also associate to such restrictions a 1-parameter family of symmetric extensions, namely completely positive linear maps $\ca_n\to\ca_\infty$ which are right inverses of the $\r_n$, and commute with the symmetries of the triangle (see paragraph 2.2.4). By using these extensions one can prove that $\ca_\infty$ is nuclear.
Among these symmetric extensions, particularly useful are the harmonic extensions, and the affine extensions, see below for details.

\noindent We then study the ideals of $\ca_\infty$ which lead us to a classification of the irreducible representations: such representations are either finite dimensional, and in this case factorise through the maps $\r_n$, or faithful, and in this case they are restrictions of representations $\pi:\UHF(3^\infty)\to\cb(H)$ with $\pi(\ca)''=\pi(\UHF(3^\infty))''$.
Primitive ideals and tracial states are fully described too (cf. sections \ref{primid} and \ref{traces}).

\noindent As mentioned above, our description of $\ca_\infty$ rests upon purely topological data. However, the self-similar structure provides a harmonic structure.
Indeed the symmetric energy form on $\ca_0=\bc(V_0)$ generates, by self-similarity, a sequence of energy forms on $\ca_n$, while the harmonic extension gives the compatibility between the energies of different levels, finally providing a densely defined closed symmetric Dirichlet form on $\ca_\infty$.
In other terms, topology plus self-similarity endow $\ca_\infty$ with a further structure  given by the Dirichlet form and a dense sub-algebra, which consists of the elements with finite energy.

\noindent The last step consists in a further enrichment of the structure of $\ca_\infty$, which has a metric content. It is based on the observation that the data $(\ch,D)$ of the spectral triple on the classical gasket considered in \cite{GuIs16} works perfectly well for the noncommutative gasket: $\ca_\infty$ acts naturally on $\ch$ and the bounded commutator property is satisfied for the dense $*-$algebra consisting of the affine extension of the elements of $\ca_n$, $n\in\bn$.
As mentioned above, the pair $(\ch,D)$ adds a metric information, since we prescribed that all edges of the classical gasket are eigenvectors of $|D|$ with eigenvalues equal to the inverse of the length of the edge in the Euclidean metric of $\mathbb{R}^2$.

\noindent We obtain that $(\ca_\infty,L)$, with $L(a)=\|[D,a]\|$, is a quantum metric space in the sense of Rieffel \cite{Rief}.
Besides of some results which are directly inherited form the commutative case, such as the value of the metric dimension being $\log3/\log2$, we show that the spectral triple fits well with the features of $\ca_\infty$ described above. The trace on $\ca_\infty$ obtained from the Dixmier trace via the Connes trace formula is simply the restriction of the trace on $\UHF(3^\infty)$. Finally, with a formula already used in \cite{CGIS1,GuIs16}, one also recovers from $D$ the self-similar Dirichlet form, explicitly described above.

\noindent We conclude this introduction mentioning that some preliminary forms of the results contained in this paper have been illustrated in 2017 during conferences in Toulouse, Cornell and Warsaw.

\section{The algebras of the classical and quantum Sierpinski gasket}

\subsection{A functional description of the classical Sierpinski gasket}

The Sierpinski gasket may be defined via three similitudes $w_j$, $j=1,2,3$, of the Euclidean plane with contraction parameter equal to $1/2$ and centered in the vertices of an equilateral triangle. Such maps give a contraction $W$ on the space of compact subsets of $\br^2$ endowed with the Hausdorff distance, $W(C)=\cup_{i=1,2,3}w_i(C)$, and the gasket $K$ is defined as the unique fixed point. Setting $V_0$ as the set of vertices of the triangle, the vertices $V_n$ are defined inductively via $V_{n+1}=W(V_n)$. Since $V_0$ consists of fixed points, we easily get $V_n\subset V_{n+1}$. We shall also consider the set  $E_1=\{\overrightarrow{e_1},\overrightarrow{e_2},\overrightarrow{e_3},\overleftarrow{e_1},\overleftarrow{e_2},\overleftarrow{e_3}\}$ consisting of the six oriented edges of the triangle, and the families $E_n$ of oriented edges of level $n$ defined inductively by $E_{n+1}=W(E_n)$. We note in passing that with our notation any geometrical edge is counted twice in $E_n$. For each oriented edge $e$ we denote by $e^+$, resp. $e^-$ the target, resp. the source of $e$.

The self-similarity equation $K=\bigcup_{j=1,2,3}w_j(K)$ gives a family of relations for the set of vertices (essential fixed points), namely $V_{n+1}=\bigcup_{j=1,2,3}w_j(V_n)$.
The non triviality of the construction lies in the non disjointness of the  three copies of $V_n$, namely we get a non-injective projection $\psi_{n+1}:w_1(V_n)\sqcup w_2(V_n)\sqcup w_3(V_n)\to V_{n+1}$.
From the functional point of view this gives rise to a non-surjective embedding $\psi_{n+1}^*:\bc(V_{n+1})\hookrightarrow\bc^3\otimes \bc(V_{n})$.
Let us now consider the map $\psi_{1}:w_1(V_0)\sqcup w_2(V_0)\sqcup w_3(V_0)\to V_{1}$  described pictorially in fig. \ref{psi1}.

\begin{figure}
\begin{tikzpicture}
\draw[gray, thick] (0,0) -- (2,0);
\draw[gray, thick] (0,0) -- (1,1.73);
\draw[gray, thick] (2,0) -- (1,1.73);
\filldraw[black] (0,0) circle (2pt) node[anchor=east] {$2$};
\filldraw[black] (2,0) circle (2pt) node[anchor=north] {$1$};
\filldraw[black] (1,1.73) circle (2pt) node[anchor=east] {$3$};
\filldraw[black] (1,0.86) circle (0pt) node[anchor=center] {$1$}; 
\draw[gray, thick] (1.5,2.6) -- (3.5,2.6);
\draw[gray, thick] (2.5,4.33) -- (1.5,2.6);
\draw[gray, thick] (2.5,4.33) -- (3.5,2.6);
\filldraw[black] (2.5,4.33) circle (2pt) node[anchor=south] {$2$};
\filldraw[black] (3.5,2.6) circle (2pt) node[anchor=west] {$3$};
\filldraw[black] (1.5,2.6) circle (2pt) node[anchor=east] {$1$};
\filldraw[black] (2.5,3.46) circle (0pt) node[anchor=center] {$2$}; 
\draw[gray, thick] (3,0) -- (5,0);
\draw[gray, thick] (3,0) -- (4,1.73);
\draw[gray, thick] (4,1.73) -- (5,0);
\filldraw[black] (3,0) circle (2pt) node[anchor=north] {$3$};
\filldraw[black] (5,0) circle (2pt) node[anchor=west] {$2$};
\filldraw[black] (4,1.73) circle (2pt) node[anchor=west] {$1$};
\filldraw[black] (4,0.86) circle (0pt) node[anchor=center] {$3$}; 
\draw[black, very thick,->] (5.5,2.16) -- (6.5,2.16);
\filldraw[black] (6,2.16) circle (0pt) node[anchor=south] {$\psi_1$};
%
\draw[gray, thick] (7,0) -- (9,0);
\draw[gray, thick] (9,0) -- (11,0);
\draw[gray, thick] (7,0) -- (8,1.73);
\draw[gray, thick] (8,1.73) -- (10,1.73);
\draw[gray, thick] (8,1.73) -- (9,3.46);
\draw[gray, thick] (9,0) -- (8,1.73);
\draw[gray, thick] (10,1.73) -- (9,3.46);
\draw[gray, thick] (9,0) -- (10,1.73);
\draw[gray, thick] (10,1.73) -- (11,0);
\filldraw[black] (7,0) circle (2pt) node[anchor=east] {12};
\filldraw[black] (9,3.46) circle (2pt) node[anchor=south] {22};
\filldraw[black] (11,0) circle (2pt) node[anchor=west] {32};
\filldraw[black] (10,1.73) circle (2pt) node[anchor=west] {31=23};
\filldraw[black] (9,0) circle (2pt) node[anchor=north] {11=33};
\filldraw[black] (8,1.73) circle (2pt) node[anchor=east] {21=13};
\end{tikzpicture}
\caption{The map $\psi_1$}
 \label{psi1}
\end{figure}

\noindent
From the functional point of view, $\psi_1$ gives rise to an embedding $\psi_{1}^*:\bc(V_{1})\hookrightarrow\bc^3\otimes \bc(V_{0})$. By identifying $\bc^3$ with the diagonal sub-algebra of $ M_3(\bc)$, we get the embedding $\bc(V_{1})\hookrightarrow M_3(\bc)\otimes\bc(V_{0})$, which we denote by the same symbol. The Bratteli diagram for the embedding $\psi_1^*$ is in Fig. \ref{diag01},
     \begin{figure}[ht]
 	 \centering
\begin{tikzpicture}
\draw[gray, thick] (-0.86,0.5) -- (0.86,0.5);
\draw[gray, thick] (0,-1) -- (-0.86,0.5);
\draw[gray, thick] (0,-1) -- (0.86,0.5);
\draw[gray, thick] (0,0.5) -- (0,2);
\draw[gray, thick] (-1.73,-1) -- (-0.43,-0.25);
\draw[gray, thick] (1.73,-1) -- (0.43,-0.25);
\filldraw[black] (0,0.5) circle (2pt) ;
\filldraw[black] (0.43,-0.25) circle (2pt) ;
\filldraw[black] (-0.43,-0.25) circle (2pt) ;
\draw[black] (0,2) circle (2pt) ;
\draw[black] (1.73,-1) circle (2pt) ;
\draw[black] (-1.73,-1) circle (2pt) ;
\draw[black] (0,-1) circle (2pt) ;
\draw[black] (0.86,0.5) circle (2pt) ;
\draw[black] (-0.86,0.5) circle (2pt) ;
\end{tikzpicture}
	  \caption{Bratteli diagram of the inclusion $\psi_1^*:\bc(V_{1})\hookrightarrow M_3(\bc)\otimes\bc(V_{0})$}
	 \label{diag01}
     \end{figure}
where black dots have label 3 and describe the minimal central projections in $ M_3(\bc)\otimes\bc(V_0)$, while white dots have label 1 and describe the minimal (central) projections in $\bc(V_1)$.

The non-injectivity of the map $\psi_n$ is  related to a suitable gluing of the 3 external vertices of each copy of $V_n$ producing the 6 vertices of level 1 in $V_{n+1}$. Apart from that, the map is indeed injective.
This implies that the algebras $\bc(V_n)$ satisfy the inductive relations
\begin{equation}\label{identification}
\bc(V_{n+1}\setminus V_1)\cong\bc^3\otimes \bc(V_{n}\setminus V_0),\quad n\geq1,
\end{equation}
and the map $\psi^*_{n+1}$ acts diagonally w.r.t. the decomposition $\bc(V_{n+1})=\bc(V_1)\oplus\bc(V_{n+1}\setminus V_1)$;
it acts as $\psi_1^*$ on the first summand  and consists of the identification \eqref{identification} on the second summand:
$$
\psi_{n+1}^*(\bc(V_{n+1}))=
\psi^*_1(\bc(V_1))\oplus \bc^3\otimes\bc(V_n\setminus V_0)
\hookrightarrow \bc^3\otimes\bc(V_n).
$$

We have just shown that the map $\psi_1^*$ determines all the embeddings $\psi_n^*$. We now show how the inductive relations \eqref{identification} together with the map $\psi_1^*$ completely determine the algebra of continuous functions on $K$.

Setting $\psi^*_{1,0}:=\psi^*_1$ and $\psi^*_{n+1,0}:=(id\otimes \psi^*_{n,0})\circ \psi^*_{n+1}:\bc(V_{n+1})\to(\bc^3)^{\otimes (n+2)}$, defines inductively the maps $\psi^*_{n,0}:\bc(V_{n})\to(\bc^3)^{\otimes (n+1)}$. Composing again with the natural embedding of $(\bc^3)^{\otimes (n+1)}$ into the inductive limit $C^*$-algebra $(\bc^3)^{\otimes \infty}$, we get  injective maps $\f_n$ from $\bc(V_n)$ into $(\bc^3)^{\otimes \infty}$, namely all $\bc(V_n)$'s may be identified with suitable unital sub-algebras of $(\bc^3)^{\otimes \infty}$.

We may also identify the continuous functions on $\Sigma_\infty=$ the infinite words on three letters $\cong$ the boundary of the infinite ternary tree, with $(\bc^3)^{\otimes\infty}$. Since the classical analysis of Kigami \cite{Kiga} provides a continuous projection $P:\Sigma_\infty\to K$, by which $C(K)$ embeds in $(\bc^3)^{\otimes\infty}$, from now on we shall always consider the algebras $\bc(V_n)$ and $C(K)$ as subalgebras of $(\bc^3)^{\otimes\infty}$. Our aim now is to show that in a natural sense $\bc(V_n)$ converges to $C(K)$
as $n\to\infty$.


\begin{thm}\label{Kinfinity}
$C(K) = \{a\in(\bc^3)^{\otimes \infty}:a=\lim\f_n(a_n), a_n\in\bc(V_n)\}$.
\end{thm}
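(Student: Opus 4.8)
The plan is to identify $C(K)$ with its image $P^*C(K)$ inside $C(\Sigma_\infty)=(\bc^3)^{\otimes\infty}$, which, $P$ being a continuous surjection of compact Hausdorff spaces, is exactly the closed $*$-subalgebra of those functions that are constant on the fibres of $P$. Everything then reduces to one concrete formula for the embeddings $\f_n$. I claim that for $a\in\bc(V_n)$ and $w=(w_1,w_2,\dots)\in\Sigma_\infty$,
\begin{equation}\label{eq:plan-key}
\f_n(a)(w)=a\big(P(w_1\cdots w_n\,\overline{w_{n+1}})\big),
\end{equation}
where $\overline{k}:=kkk\cdots$ and $P(w_1\cdots w_n\overline{w_{n+1}})$ is the vertex of $V_n=w_{w_1}\cdots w_{w_n}(V_0)$ with that eventually-constant address. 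In particular $\f_n(a)$ depends only on $w_1,\dots,w_{n+1}$, consistent with $\f_n(\bc(V_n))\subseteq(\bc^3)^{\otimes(n+1)}$.

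I expect \eqref{eq:plan-key} to be the main obstacle: it requires unwinding the inductive definition $\psi^*_{n,0}=(\mathrm{id}\otimes\psi^*_{n-1,0})\circ\psi^*_n$, matching the $k$-th tensor factor of $(\bc^3)^{\otimes\infty}$ with the $k$-th letter of the address, and checking that the non-standard numbering of the vertices in Fig.~\ref{psi1} does not disturb the underlying \emph{geometric} gluing, i.e.\ that $\psi_n^*(a)(i,v)=a(w_i(v))$ for $v\in V_{n-1}$. I would prove it by induction on $n$. The base case records that $\psi_1$ sends the vertex $v$ of the $i$-th copy of $V_0$ to $w_i(v)$, so $\psi_1^*(a)(w_1,w_2)=a(w_{w_1}(P(\overline{w_2})))=a(P(w_1\overline{w_2}))$. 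For the step, writing $b_i(v):=a(w_i(v))$ and using the inductive hypothesis together with the self-similarity $P(iu)=w_i(P(u))$,
\[
\f_n(a)(w)=\f_{n-1}(b_{w_1})(w_2,w_3,\dots)=b_{w_1}\big(P(w_2\cdots w_n\overline{w_{n+1}})\big)=a\big(P(w_1\cdots w_n\overline{w_{n+1}})\big),
\]
which is \eqref{eq:plan-key}.

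Granting \eqref{eq:plan-key}, the inclusion $C(K)\subseteq\{\lim\f_n(a_n)\}$ is immediate: given $f\in C(K)$ take $a_n:=f|_{V_n}$. Then $\f_n(a_n)(w)=f(P(w_1\cdots w_n\overline{w_{n+1}}))$ while $P^*f(w)=f(P(w))$, and both points $P(w_1\cdots w_n\overline{w_{n+1}})$ and $P(w)$ lie in the common level-$n$ cell $w_{w_1}\cdots w_{w_n}(K)$, of diameter $\le 2^{-n}\diam K$. Uniform continuity of $f$ on the compact set $K$ then forces $\|\f_n(f|_{V_n})-P^*f\|\to 0$, so $P^*f$ is a limit of the required form.

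For the reverse inclusion, let $a=\lim\f_n(a_n)$ with $a_n\in\bc(V_n)$; as a uniform limit $a\in C(\Sigma_\infty)$, and it suffices to show $a$ is constant on the fibres of $P$. If $P(u)=P(u')$ with $u\ne u'$, the common value is a junction vertex $v\in V_m\setminus V_{m-1}$ for some $m$, and $u,u'$ are eventually constant past position $m$; hence for every $n\ge m$ one checks $u_1\cdots u_n\overline{u_{n+1}}=u$ and $u'_1\cdots u'_n\overline{u'_{n+1}}=u'$, so by \eqref{eq:plan-key} $\f_n(a_n)(u)=a_n(v)=\f_n(a_n)(u')$. Passing to the limit gives $a(u)=a(u')$. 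Thus $a$ descends to a continuous function on $K$, i.e.\ $a\in P^*C(K)$, which completes the identification. The crux throughout is \eqref{eq:plan-key}; once it is in place both inclusions are short, and the characterization holds despite $\{\bc(V_n)\}$ failing to be an inductive system.
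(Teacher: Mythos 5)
Your key formula is the crux, as you say, and it is exactly where the argument breaks: its base case is incompatible with the paper's non-standard numbering. From Fig.~\ref{psi1} and the formulas $\psi^*_1(\a^1_j)=e_j\otimes\a^0_2$, $\psi^*_1(\b^1_j)=e_j\otimes\a^0_1+e_{j+2}\otimes\a^0_3$, the vertex of the $i$-th copy of $V_0$ that lands on the outer corner $v_i$ is the one with \emph{internal label} $2$; so the gluing is $\psi_1(i,v)=w_i(\pi_i(v))$ where $\pi_i$ is a rotation of the triangle, nontrivial for $i=1,3$ --- it is \emph{not} $w_i(v)$, which is precisely the compatibility you flagged as needing a check and then asserted. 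Concretely: $\f_1(\b^1_1)$ is the indicator of the cylinders $11$ and $33$, so its value at any word beginning $11$ is $1$; your formula instead gives $\b^1_1\bigl(P(1\overline{1})\bigr)=\b^1_1(v_1)=0$ if $P$ is the standard Kigami coding (where $P(\overline{1})=v_1$), while for the coding $P$ adapted to the paper's labelling the point $P(\overline{1})$ is the fixed point of the rotated contraction $w_1\circ\pi_1$, which lies in no $V_m$, so the right-hand side is not even defined. Thus the formula with tail $\overline{w_{n+1}}$, and the self-similarity step $P(iu)=w_i(P(u))$ you use to propagate it, fail for every choice of $P$, and since both of your inclusions route through this formula, the gap is genuine rather than presentational.

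The repair is small and restores exactly the paper's argument. The labelling is engineered so that appending the letter $2$ is ``aging'' (Remark~\ref{GenVertex}); hence the projection $P$ that makes the statement true assigns to each vertex the address(es) obtained from its label(s) in \eqref{CVn} by appending $\overline{2}$ --- equivalently it is the coding map of the twisted IFS $\iota_i=w_i\circ\pi_i$, for which $\iota_i(v_2)=v_i$ and $P(iu)=\iota_i(P(u))$. The correct closed formula is then $\f_n(a)(w)=a\bigl(P(w_1\cdots w_{n+1}\,\overline{2})\bigr)$, i.e.\ the value of $a$ at the unique vertex of $V_n$ lying in the closed level-$(n+1)$ cell indexed by $w_1\cdots w_{n+1}$; this is Lemma~\ref{indices} in disguise and is precisely the cylindrical-function description on which the paper's proof rests. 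With that correction your two inclusions go through essentially verbatim: the forward one because $P(w_1\cdots w_{n+1}\overline{2})$ and $P(w)$ share a cell of diameter at most $2^{-(n+1)}\diam K$, and the reverse one because the nontrivial fibers of $P$ are the doubletons $\{\s j12^{k-1}\overline{2},\ \s(j+2)32^{k-1}\overline{2}\}$ over junction vertices --- eventually constant equal to $2$, which is the one place where your original formula and the corrected one happen to agree, so that computation survives unchanged. The repaired proof then coincides in substance with the paper's (cylindricity, uniform continuity on cells, constancy on the identified pairs), merely made explicit.
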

\begin{proof}
If $f\in\bc(V_n)$,  $\f_n(f)$ is a cylindrical function on $\Sigma_\infty$, namely its value on a point $\sigma\in\Sigma_\infty$ is completely determined by the first $n+1$ letters of $\sigma$. Therefore $f$ may be identified with a function on $K$ which is constant on the open cells $C$ of level $n+1$, with $f(C)=f(v)$ if $v\in\partial C\cap V_n$, and is double-valued on $V_{n+1}\setminus V_n$. Therefore a sequence $\{a_n\}$, $a_n\in\bc(V_n)$, such that $\f_n(a_n)$ converges in $(\bc^3)^{\otimes\infty}$ gives rise to a continuous function on $\Sigma_\infty$ which respects all identifications, namely to a continuous function on $K$. Conversely, given a function $f\in C(K)$, one may set $a_n$ to be its restriction to $V_n$. By the uniform continuity of $f$, $\f(a_n)$ converges in norm to a uniformly continuous function on $V$ which coincides with $f$ on all vertices.
\end{proof}

\begin{rem}
As observed in the proof above,  for any closed cell $C$ of level $n\in\bn$ there exists a unique vertex $v$ of level $(n-1)$ such that $v\in C$. Therefore, $\bc(V_n)$  can be identified with the algebra of functions on the gasket which are constant on open cells of level $n+1$ and have jump discontinuities at most on $V_{n+1}\setminus V_{n}$.
This shows that, as sub-algebras of $(\bc^3)^{\otimes\infty}$, $\bc(V_n)\cap\bc(V_m)=\bc$ if $n\ne m$, and
$\bc(V_n)\cap C(K)=\bc$ for any $n\in\bn$.
\end{rem}

\subsubsection{Numbering  the vertices}

We now give a more concrete description of the algebras $\bc(V_n)$ as sub-algebras of $(\bc^3)^{\otimes(n+1)}$. Let us describe $\psi_1$
as in Fig\,\ref{psi1}.
Correspondingly, setting $\a^0_j=e_{j},j=1,2,3$, the characteristic functions in $\bc(V_0)=\bc^3$  of the vertices in $V_0$, denoting by $\a^1_j,j=1,2,3$, the characteristic functions in $\bc(V_1)$  of the vertices in $V_0$ and by  $\b^1_j,j=1,2,3$, the characteristic functions in $\bc(V_1)$ of the vertices in $V_1\setminus V_0$, we get $\psi^*_1(\a^1_j)=e_j\otimes\a^0_2$ and $\psi^*_1(\b^1_j)=e_{j}\otimes \a^0_{1}+e_{(j+2)} \otimes \a^0_{3}$, where the elements on the right are in $\bc^3\otimes\bc^3$, the $e_{j}$'s denote the matrix units in $\bc^3$, and the indices are meant mod $3$.  Let us now denote by $\a^n_j,j=1,2,3$, the characteristic functions in $\bc(V_n)$  of the vertices in $V_0$ and by  $\b^n_j,j=1,2,3$, the characteristic functions in $\bc(V_n)$ of the vertices in $V_1\setminus V_0$, $n\in\bn$. Then, by the inductive definition of $\psi^*_{n+1}$, the algebra $\psi_{n+1}^*(\bc(V_{n+1}))\subset\bc^3\otimes\bc(V_n)$ is generated by the elements
\begin{align*}
\{\psi_{n+1}^*(\a^{n+1}_j)=e_{j}\otimes\a^n_2,\
&\psi_{n+1}^*(\b^{n+1}_j)=e_{j}\otimes\a^n_1+e_{j+2} \otimes \a^n_3,
\ e_j\otimes \xi , \\
&j=1,2,3, \xi\in\bc(V_n\setminus V_0)\}.
\end{align*}
Now, for $\s\in\Sigma_n := \{1,2,3\}^n$, we use the shortcut $\s:=e_{\s_1}\otimes e_{\s_2}\otimes\dots\otimes e_{\s_n}$, therefore $\psi_1^*(\bc(V_1))\subset\bc^3\otimes\bc^3$ is generated by the elements $\{j2,\ j1+(j+2)3:j=1,2,3\}$.

\begin{lem}\label{indices}
The algebras $\psi^*_{n,0}(\bc(V_n))\subset(\bc^3)^{\otimes(n+1)}$ are linearly generated as follows
$$
\psi^*_{n,0}(\bc(V_n))=
\langle \psi^*_{n,0}(\a^n_j),\
x\otimes \psi^*_{k,0}(\b^{k}_j):
j=1,2,3, k=1,\dots n,
x\in(\bc^3)^{\otimes (n-k)}\rangle,
$$
with $\psi^*_{n,0}(\a^n_j)=j2^n$, $\psi^*_{k,0}(\b^k_j)=(j1+(j+2)3)2^{k-1}
=\b^1_j\otimes 2^{k-1}$, $j=1,2,3$.
\end{lem}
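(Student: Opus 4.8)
The plan is to argue by induction on $n$, exploiting the recursion $\psi^*_{n+1,0}=(id\otimes\psi^*_{n,0})\circ\psi^*_{n+1}$ together with the explicit list of generators of $\psi^*_{n+1}(\bc(V_{n+1}))\subset\bc^3\otimes\bc(V_n)$ displayed just before the statement. Since the generators of the form $e_j\otimes\xi$ involve only $\xi\in\bc(V_n\setminus V_0)$, the clean way to run the induction is to \emph{strengthen} the hypothesis so as to track separately the two summands of the decomposition $\bc(V_n)=\bc(V_0)\oplus\bc(V_n\setminus V_0)$: namely that $\psi^*_{n,0}(\bc(V_0))=\langle\psi^*_{n,0}(\a^n_j):j=1,2,3\rangle$ (immediate, as $\psi^*_{n,0}$ is a homomorphism and the $\a^n_j$ span $\bc(V_0)$), while $\psi^*_{n,0}(\bc(V_n\setminus V_0))=\langle x\otimes\psi^*_{k,0}(\b^k_j):j=1,2,3,\,k=1,\dots,n,\,x\in(\bc^3)^{\otimes(n-k)}\rangle$. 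The asserted generating set for $\psi^*_{n,0}(\bc(V_n))$ is then just the union of these two.

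Before the induction I would record the two closed forms by a one-line computation each. From $\psi^*_{n+1}(\a^{n+1}_j)=e_j\otimes\a^n_2$ and $\psi^*_{n,0}(\a^n_2)=2^{n+1}$ one gets $\psi^*_{n+1,0}(\a^{n+1}_j)=e_j\otimes 2^{n+1}=j2^{n+1}$, with the base case $\psi^*_{1,0}(\a^1_j)=j2$ being exactly the defining formula for $\psi^*_1$; this yields $\psi^*_{n,0}(\a^n_j)=j2^n$. Likewise $\psi^*_{n+1}(\b^{n+1}_j)=e_j\otimes\a^n_1+e_{j+2}\otimes\a^n_3$ gives $\psi^*_{n+1,0}(\b^{n+1}_j)=e_j\otimes\psi^*_{n,0}(\a^n_1)+e_{j+2}\otimes\psi^*_{n,0}(\a^n_3)=(j1+(j+2)3)2^n$, that is $\psi^*_{k,0}(\b^k_j)=(j1+(j+2)3)2^{k-1}=\b^1_j\otimes 2^{k-1}$, the base case again coming from $\psi^*_1$.

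For the inductive step I would apply $id\otimes\psi^*_{n,0}$ to the three families of generators of $\psi^*_{n+1}(\bc(V_{n+1}))$. The $\a^{n+1}$- and $\b^{n+1}$-generators produce, by the computations above, exactly the level-$(n+1)$ elements $\psi^*_{n+1,0}(\a^{n+1}_j)$ and the $k=n+1$ generators $\psi^*_{n+1,0}(\b^{n+1}_j)$ (with trivial prefactor $x\in(\bc^3)^{\otimes 0}=\bc$). For the remaining generators $e_j\otimes\xi$, $\xi\in\bc(V_n\setminus V_0)$, I invoke the strengthened hypothesis: $\psi^*_{n,0}(\xi)$ is a linear combination of the $x\otimes\psi^*_{k,0}(\b^k_{j'})$ with $k\le n$, whence $e_j\otimes\psi^*_{n,0}(\xi)$ is a combination of $(e_j\otimes x)\otimes\psi^*_{k,0}(\b^k_{j'})$. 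The crux is the reindexing: as $j$ ranges over $\{1,2,3\}$ and $x$ over the basis $\{e_\sigma:\sigma\in\{1,2,3\}^{n-k}\}$, the element $x'=e_j\otimes x$ runs through the basis of $(\bc^3)^{\otimes((n+1)-k)}$, so these contributions are precisely the level-$(n+1)$ generators with $k\le n$. Collecting the three families reproduces the asserted generating set for $\psi^*_{n+1,0}(\bc(V_{n+1}))$, and the same bookkeeping — the $\a$-family hitting $\bc(V_0)$, the other two hitting $\bc(V_{n+1}\setminus V_0)$ in view of the diagonal action of $\psi^*_{n+1}$ on $\bc(V_{n+1})=\bc(V_1)\oplus\bc(V_{n+1}\setminus V_1)$ — re-establishes the strengthened statement at level $n+1$.

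The only genuinely delicate point, and the step I expect to be the main obstacle, is this reindexing together with the correct identification of summands: one must verify that absorbing the new tensor slot $e_j$ into the prefactor $x$ promotes the index range from $(\bc^3)^{\otimes(n-k)}$ to $(\bc^3)^{\otimes((n+1)-k)}$ for every $k=1,\dots,n$, and that the generator coming from $\b^{n+1}$ supplies exactly the missing $k=n+1$ term. Everything else is the routine use of the homomorphism property of $\psi^*_{n,0}$ and of the explicit generator list preceding the lemma.
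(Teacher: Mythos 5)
Your proof is correct and takes essentially the same route as the paper: induction on $n$ through the recursion $\psi^*_{n+1,0}=(id\otimes\psi^*_{n,0})\circ\psi^*_{n+1}$, the closed-form computations $\psi^*_{n+1,0}(\a^{n+1}_j)=j2^{n+1}$ and $\psi^*_{n+1,0}(\b^{n+1}_j)=\b^1_j\otimes 2^{n}$, and the reindexing that absorbs the new tensor slot $e_j$ into the prefactor $x$, promoting $(\bc^3)^{\otimes(n-k)}$ to $(\bc^3)^{\otimes(n+1-k)}$. If anything, your strengthened hypothesis tracking $\bc(V_n\setminus V_0)$ is the more careful bookkeeping: the paper's displayed inductive step writes $\bc(V_n\setminus V_1)$ (with the corresponding exponent range $k=0,\dots,n-2$) where the identification \eqref{identification} actually gives $\bc(V_n\setminus V_0)$, and your version correctly supplies the $k=n$ generators $x\otimes\psi^*_{n,0}(\b^n_j)$, $x\in\bc^3$, that this range omits.
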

\begin{proof}
We  prove the formula by induction.
The statement is true for $n=1$. Assuming it is true for $n$, one gets
\begin{align*}
\psi^*_{n+1,0}(\a^{n+1}_j)
&=(id\otimes \psi^*_{n,0})\circ \psi^*_{n+1}(\a^{n+1}_j)
=(id\otimes \psi^*_{n,0})(e_j\otimes \a^{n}_2)
=j2^{n+1},\\
\psi^*_{n+1,0}(\b^{n+1}_j)
&=(id \otimes \psi^*_{n,0})\circ \psi^*_{n+1}(\b^{n+1}_j)
=(id \otimes \psi^*_{n,0})(e_{j}\otimes\a^n_1+e_{j+2} \otimes \a^n_3)\\
&=e_{j}\otimes\psi^*_{n,0}(\a^n_1)+e_{j+2} \otimes \psi^*_{n,0}(\a^n_3)
=j12^n+(j+2)32^n=\b^1_j\otimes 2^{n}.
\end{align*}
The formula above states $\psi^*_{n,0}(\bc(V_n\setminus V_1))= \langle x\otimes (j1+(j+2)3)2^{k-1},j=1,2,3, k=1,\dots n-1, x\in(\bc^3)^{\otimes (n-k)}\rangle$. Then
\begin{align*}
\psi^*_{(n+1),0}(\bc(&V_{n+1}\setminus V_1))
=(id\otimes \psi^*_{n,0})\cdot \psi^*_{n+1}(\bc(V_{n+1}\setminus V_1))
=(id\otimes \psi^*_{n,0})(\bc^3\otimes\bc(V_{n}\setminus V_1))\\
&=\langle y\otimes x\otimes (j1+(j+2)3)2^k,j=1,2,3,\ k=0,\dots n-2, y\in\bc^3, x\in(\bc^3)^{\otimes (n-1-k)}\rangle\\
&=\langle x\otimes (j1+(j+2)3)2^k:j=1,2,3,\ k=0,\dots n-2, x\in(\bc^3)^{\otimes (n-k)}\rangle
\end{align*}
\end{proof}

The description above provides a labelling of the vertices of level $n$ by identifying them with their characteristic functions hence with the labels
\begin{equation}\label{CVn}
V_n=\{j2^n,\ \s (j1+(j+2)3)2^{k-1}:j=1,2,3, k=1,\dots n, \s\in \Sigma_{n-k} \}
\end{equation}

\begin{rem}\label{GenVertex}
In the previous labelling of vertices, the exponent $p$ of 2 on the right side of the label of a given vertex $v$ describes its age in $V_n$, i.e. $p=0$ means that $v$ firstly appeared in $V_n$, $p=1$ that it firstly appeared in $V_{n-1}$, and so on. In this way any vertex in $V_{n-1}$ appears also in $V_n$ simply by adding a 2 on the right to its label. This gives a non-unital embedding of $\bc(V_n)$ in $\bc(V_{n+1})$. Dually, we get a restriction map for functions, which is a unital $*$-algebra morphism from $\bc(V_{n+1})$ to $\bc(V_n)$, in particular the restriction of the characteristic function of a vertex is 0 if $p=0$ and is the vertex with the same label, but $p$ replaced by $p-1$, if $p>0$. Restrictions may be composed, and the restriction of the characteristic function of a vertex $v\in V_{n+k}$ to $\bc(V_n)$ is non-zero if and only if $v$ is at least $k$-generations old. In this case its restriction is obtained simply by subtracting $k$ to the exponent of 2.
\end{rem}

%
%
%
%

\subsection{The quantum gasket}

\subsubsection{A result on sequences of subalgebras of a given C$^*$-algebra}

Suppose we have a C$^*$-algebra $\cb$ and a sequence of sub-C$^*$-algebras $\cb_n$. We may define
\begin{align}
\cb_\infty&=\{b\in\cb:b=\lim_n b_n, b_{n}\in\cb_{n}\},\label{Ainfty-}
\\
\cb^\infty&=\bigcap_{n\in\bn}C^*(\bigcup_{k\geq n}\cb_k)\label{Ainfty+}
\end{align}

\begin{prop}\label{Ainfty}
Both $\cb_\infty$ and $\cb^\infty$ are C$^*$-algebras, and $\cb_\infty\subseteq\cb^\infty$.
\end{prop}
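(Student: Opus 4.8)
The plan is to treat the two algebras separately and then verify the inclusion last. For $\cb^\infty$ there is essentially nothing to prove: each $C^*(\bigcup_{k\geq n}\cb_k)$ is by definition a norm-closed $*$-subalgebra of $\cb$, and an arbitrary intersection of $C^*$-subalgebras of $\cb$ is again norm-closed, closed under the linear operations, under multiplication and under the adjoint. Hence $\cb^\infty$ is a $C^*$-subalgebra of $\cb$ immediately.

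For $\cb_\infty$ I would first check that it is a $*$-subalgebra, using only the (joint) continuity of the algebraic operations. If $a=\lim_n a_n$ and $b=\lim_n b_n$ with $a_n,b_n\in\cb_n$, then $a_n+b_n\to a+b$, $\lambda a_n\to\lambda a$, $a_n^*\to a^*$, and, since a convergent sequence is bounded so that $\|a_nb_n-ab\|\le\|a_n\|\,\|b_n-b\|+\|a_n-a\|\,\|b\|\to 0$, also $a_nb_n\to ab$. As each $\cb_n$ is a $*$-subalgebra, the approximants $a_n+b_n$, $\lambda a_n$, $a_n^*$, $a_nb_n$ all lie in $\cb_n$, so each of the four limits lies in $\cb_\infty$.

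The real content is the norm-closedness of $\cb_\infty$, and this is the one step I expect to require care, via a diagonal argument. Suppose $c^{(k)}\in\cb_\infty$ with $c^{(k)}\to c$ in $\cb$, and for each $k$ fix approximants $c^{(k)}_n\in\cb_n$ with $c^{(k)}_n\to c^{(k)}$ as $n\to\infty$. The idea is to select, for each $m$, an index $k(m)$ with $\|c^{(k(m))}-c\|<\tfrac1{2m}$, and then a threshold $N(m)$, which I may take strictly increasing in $m$, beyond which $\|c^{(k(m))}_n-c^{(k(m))}\|<\tfrac1{2m}$ for all $n\ge N(m)$. Setting $c_n:=c^{(k(m))}_n$ for $N(m)\le n<N(m+1)$ (and, say, $c_n:=0\in\cb_n$ for $n<N(1)$) produces a section $c_n\in\cb_n$ satisfying $\|c_n-c\|<\tfrac1m$ on the block $[N(m),N(m+1))$, whence $c_n\to c$ and $c\in\cb_\infty$. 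The only delicate point is the bookkeeping of the two indices $k$ and $n$ so that both error terms are controlled simultaneously.

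Finally, for the inclusion $\cb_\infty\subseteq\cb^\infty$, take $a=\lim_n a_n\in\cb_\infty$ with $a_n\in\cb_n$ and fix $N$. For every $n\ge N$ one has $a_n\in\cb_n\subseteq C^*(\bigcup_{k\ge N}\cb_k)$; since altering finitely many terms does not affect a limit, $a=\lim_{n\ge N}a_n$ lies in the norm-closed algebra $C^*(\bigcup_{k\ge N}\cb_k)$. As $N$ was arbitrary, $a\in\bigcap_N C^*(\bigcup_{k\ge N}\cb_k)=\cb^\infty$, which completes the argument.
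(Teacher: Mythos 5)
Your proposal is correct, and all three steps go through: $\cb^\infty$ as an intersection of $C^*$-subalgebras, the $*$-algebra property of $\cb_\infty$ from continuity of the operations, and the inclusion via the tail argument (altering finitely many terms of an approximating section is indeed harmless, and $0\in\cb_n$ makes your padding legitimate). The only place you diverge from the paper is in the closedness of $\cb_\infty$: the paper first reformulates membership as $b\in\cb_\infty$ if and only if $d(b,\cb_n)\to 0$, after which closedness is a two-line $\eps/2$ triangle-inequality argument — given $b\in\overline{\cb_\infty}$, pick $b_\eps\in\cb_\infty$ with $\|b-b_\eps\|<\eps/2$ and note $d(b,\cb_n)\le \|b-b_\eps\|+d(b_\eps,\cb_n)<\eps$ eventually. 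Your diagonal construction of an explicit section $c_n\in\cb_n$ with blocks $[N(m),N(m+1))$ proves exactly the same thing, at the cost of the two-index bookkeeping you flag yourself; the distance-function reformulation is what lets the paper avoid ever selecting a concrete approximating sequence. You also gain something the paper skips: the written proof in the paper never addresses $\cb_\infty\subseteq\cb^\infty$ at all (it is treated as obvious), whereas your tail argument makes it explicit, which is a worthwhile addition.
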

\begin{proof}
The C$^*$-algebra property of $\cb^\infty$ is obvious by construction.
The $*-$algebra property of $\cb_\infty$ follows by the properties of norm-limits. As for its closure, let us observe that $b=\lim_nb_n$, $b_n\in\cb_n$ {\it iff} $d(b,\cb_n)\to0$, which in turn is equivalent to $\forall\eps>0\,\exists n_\eps:n>n_\eps\Rightarrow d(b,\cb_n)<\eps$. Therefore, given $b\in\overline{\cb_\infty}$, $\eps>0$, we find $b_\eps\in\cb_\infty: d(b,b_\eps)<\eps/2$ and $n_\eps\in\bn$:  $n>n_\eps\Rightarrow d(b_\eps,\cb_n)<\eps/2$. As a consequence $d(b,\cb_n)<\eps$ for $n>n_\eps$, namely $b\in\cb_\infty$.
\end{proof}
\begin{rem}\label{Xinfinity}
Let us observe that $\cb_\infty$ can be strictly smaller than $\cb^\infty$.
For example, setting $\cb_n=\bc I$ for $n$ odd and $\cb_n=\cb$ for $n$ even, $\cb_\infty=\bc I$ while $\cb^\infty=\cb$.
\end{rem}

\subsubsection{The algebras $\ca_n$}

We have shown that the family of algebras $\bc(V_n)$, their embeddings in $(\bc^3)^{\otimes\infty}$, and finally the $C^*$-algebra $C(K)$ can be defined in terms of the map $\psi^*_1$ and of the inductive relations \eqref{identification}. We propose now a simple quantisation of this procedure, and call the corresponding $C^*$-algebra $\ca_\infty$ (the algebra of functions on) the quantum gasket. In the definition below $\ca_n$ is the quantised version of $\bc(V_n)$ and $\car_n$ is the quantised version of $\bc(V_n\setminus V_1)$.

\begin{dfn}[The algebras $\ca_n$]\label{IndDef}
Set $\car_1=\{0\}$ and define inductively $\car_n$ such that $\car_{n+1} = M_3(\bc) \otimes (\bc^3\oplus \car_n)$, $n\in\bn$. Then set $\ca_0=\bc^3$, $\ca_n=\bc^3\oplus\bc^3\oplus\car_n$, $n\in\bn$.
\end{dfn}

\begin{lem}\label{AntoUHF}
The map $\psi^*_1$ for the classical gasket determines inclusions $\psi^*_{n+1}:\ca_{n+1}\hookrightarrow M_3(\bc)\otimes\ca_n$. Therefore we get embeddings $\ca_n
\mathop{\longrightarrow}\limits^{\psi^*_{n,0}}
M_3(\bc)^{\otimes(n+1)} \mathop{\longrightarrow}\limits^{i_{n+1}}\UHF(3^\infty)$, where
$\psi^*_{n,0}$ is defined inductively by  $\psi^*_{1,0}:=\psi^*_1$, $\psi^*_{n+1,0}:=(id\otimes \psi^*_{n,0})\circ \psi^*_{n+1}$  and $i_n$ is the natural embedding of
$M_3(\bc)^{\otimes n}$ into the inductive limit $\UHF(3^\infty)$.
\end{lem}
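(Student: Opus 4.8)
The plan is to construct the maps $\psi^*_{n+1}$ by induction on $n$, copying the block (``diagonal'') structure of the classical embeddings $\psi^*_{n+1}$ displayed after \eqref{identification}, to check by hand that each is an injective $*$-homomorphism, and finally to obtain the embeddings into $\UHF(3^\infty)$ as a formal composition. First I would fix the bookkeeping of the summands. Writing $\ca_n=\bc^3\oplus\bc^3\oplus\car_n$ (with $\car_1=\{0\}$, so $\ca_1=\bc^3\oplus\bc^3$), I read the first $\bc^3$ as the quantisation of $\bc(V_0)$, the second as that of $\bc(V_1\setminus V_0)$, and $\car_n$ as that of $\bc(V_n\setminus V_1)$; thus $\ca_1=\bc^3\oplus\bc^3$ quantises $\bc(V_1)$ and the last two summands $\bc^3\oplus\car_n$ quantise $\bc(V_n\setminus V_0)$. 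I then introduce the non-unital inclusion $j_n:\bc^3\oplus\car_n\hookrightarrow\ca_n$, $j_n(\xi,r)=(0,\xi,r)$, onto these last two summands; it is the quantum counterpart of $\bc(V_n\setminus V_0)\hookrightarrow\bc(V_n)$.

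Taking $\psi^*_1:\ca_1\to M_3(\bc)\otimes\ca_0$ to be the given classical map as base case, for $n\geq 1$ I would define, using $\ca_{n+1}=\ca_1\oplus\car_{n+1}$ and $\car_{n+1}=M_3(\bc)\otimes(\bc^3\oplus\car_n)$ from Definition \ref{IndDef},
$$
\psi^*_{n+1}(a\oplus r)=\psi^*_1(a)+(id\otimes j_n)(r)\in M_3(\bc)\otimes\ca_n,\qquad a\in\ca_1,\ r\in\car_{n+1}.
$$
The crux is that the two terms of this formula have orthogonal ranges. Indeed, from the explicit formulas $\psi^*_1(\a^1_j)=e_j\otimes\a^0_2$ and $\psi^*_1(\b^1_j)=e_j\otimes\a^0_1+e_{j+2}\otimes\a^0_3$, the image of $\psi^*_1$ involves only the $V_0$-part $\bc(V_0)$ and hence sits in $M_3(\bc)\otimes\bc^3_{(1)}$, the first summand, whereas $(id\otimes j_n)(\car_{n+1})$ sits in $M_3(\bc)\otimes(\bc^3_{(2)}\oplus\car_n)$, the last two; these are complementary, mutually annihilating ideals of $M_3(\bc)\otimes\ca_n$.

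Consequently $\psi^*_{n+1}$ is a $*$-homomorphism, being a sum of two $*$-homomorphisms with orthogonal ranges, and it is injective: both $\psi^*_1$ and $id\otimes j_n$ are injective (the latter since $j_n$ is), and if $\psi^*_1(a)+(id\otimes j_n)(r)=0$ then, multiplying on the left by $\psi^*_1(a)^*=\psi^*_1(a^*)$ and using orthogonality, $\psi^*_1(a)^*\psi^*_1(a)=0$, so $a=0$ and then $r=0$. A short computation, summing over the minimal projections and using $\sum_j e_{jj}=1_{M_3(\bc)}$, shows moreover that $\psi^*_{n+1}$ is unital, so that the construction is compatible with the unital structure of the UHF algebra.

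The composites are then automatic: setting $\psi^*_{1,0}=\psi^*_1$ and $\psi^*_{n+1,0}=(id\otimes\psi^*_{n,0})\circ\psi^*_{n+1}$, each factor is an injective unital $*$-homomorphism, so $\psi^*_{n,0}:\ca_n\to M_3(\bc)^{\otimes(n+1)}$ is an embedding, the target being reached through $\ca_n\hookrightarrow M_3(\bc)\otimes\ca_{n-1}\hookrightarrow\dots\hookrightarrow M_3(\bc)^{\otimes n}\otimes\bc^3\hookrightarrow M_3(\bc)^{\otimes(n+1)}$, the last arrow the diagonal embedding $\bc^3\hookrightarrow M_3(\bc)$; composing with the canonical unital injection $i_{n+1}$ of $M_3(\bc)^{\otimes(n+1)}$ into the inductive limit $\UHF(3^\infty)$ yields the stated embeddings. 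The main obstacle I anticipate is exactly the orthogonality/diagonality bookkeeping in the second and third steps: one must keep track of which copy of $\bc^3$ inside $\ca_n$ receives the image of $\psi^*_1$ and which receives that of the identification map $id\otimes j_n$, and confirm from the explicit classical formulas that $\psi^*_1$ lands entirely in the $V_0$-summand, so that the two ranges genuinely do not interfere; once this is settled, both the homomorphism property and injectivity are routine.
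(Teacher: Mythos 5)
Your proposal is correct and follows essentially the same route as the paper's proof: you define $\psi^*_{n+1}$ exactly as the paper does, acting as $\psi^*_1$ on the $\ca_1$-summand (landing in $M_3(\bc)\otimes\bc^3_{(1)}$, the quantised $\bc(V_0)$-part) and as the identification of $\car_{n+1}=M_3(\bc)\otimes(\bc^3\oplus\car_n)$ with $M_3(\bc)\otimes(\{0\}\oplus\bc^3\oplus\car_n)$ on the remainder, then compose to get $\psi^*_{n,0}$ and $i_{n+1}$. The only difference is that you spell out the verification (orthogonality of the two ranges as complementary ideals, injectivity, unitality) that the paper dismisses as obvious by definition, and these checks are accurate.
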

\begin{proof}
According to the definition above, $\psi^*_{1}$ is a map from $\bc^3\oplus\bc^3$ into $\bc^3\otimes\bc^3$. Then if $(x,y,0)\in\ca_{n+1}=\bc^3\oplus\bc^3\oplus\car_{n+1}$, $n\in\bn$, we set $\psi^*_{n+1}(x,y,0)=\psi^*_1(x,y)\oplus0\oplus0\in M_3(\bc) \otimes \big(\bc^3\oplus\{0\}\oplus\{0\}\big)\subset M_3(\bc)\otimes\ca_n$, and define $\psi^*_{n+1}|_{\car_{n+1}}$  simply as the identification of $\car_{n+1}=M_3(\bc)\otimes(\bc^3\oplus \car_n)$ with $M_3(\bc)\otimes(\{0\}\oplus\bc^3\oplus \car_n)\subset M_3(\bc)\otimes\ca_n$.
The ranges of the maps $\psi^*_{n,0}$ are obvious by definition.
\end{proof}

In the following we identify $\ca_n$ with its image in $M_3(\bc)^{\otimes(n+1)}\subset\UHF(3^\infty)$.

\begin{rem}
As mentioned above, $\car_n$ is the quantised version of $\bc(V_n\setminus V_1)$, in fact $\bc(V_{n+1}\setminus V_1) = \bc^3 \otimes (\bc^3\oplus \bc(V_n\setminus V_1))$, $n\in\bn$, namely passing from $\bc(V_n\setminus V_1)$ to $\car_n$ consists in replacing the leftmost factor $\bc^3$ with $M_3(\bc)$. Then the $\bc(V_n)$ can be defined in analogy with the definition of the $\ca_n$'s: $\bc(V_n)=\bc^3\oplus\bc^3\oplus\bc(V_{n+1}\setminus V_1)$, $n\in\bn$. This shows in particular that $\bc(V_n)\subset \ca_n\subset M_3(\bc)^{\otimes n}\otimes \bc(V_0)$.
\end{rem}
\begin{thm}\label{AinfAlgebra}
The set $\displaystyle
\ca_\infty=\{a\in\UHF(3^{\infty}):a=\lim_n a_n, a_{n}\in\ca_{n}\}$
is a $C^*$-algebra containing $C(K)$ as an abelian sub-algebra.
\end{thm}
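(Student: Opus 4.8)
The plan is to separate the statement into two parts — the $C^*$-algebra property of $\ca_\infty$, which should be purely formal, and the inclusion $C(K)\subseteq\ca_\infty$, which needs the compatibility of the classical and quantum embeddings.

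First I would dispose of the $C^*$-algebra property by appealing to Proposition \ref{Ainfty}. Using Lemma \ref{AntoUHF} to identify each $\ca_n$ with its image $i_{n+1}\circ\psi^*_{n,0}(\ca_n)$ inside $\cb:=\UHF(3^\infty)$, the set $\ca_\infty$ becomes exactly the set $\cb_\infty$ of \eqref{Ainfty-} for the sequence $\cb_n:=\ca_n$. Proposition \ref{Ainfty} then delivers immediately that $\ca_\infty$ is a $C^*$-algebra, with nothing further to check.

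Next I would establish $C(K)\subseteq\ca_\infty$. The crucial observation is that the quantum inclusions restrict to the classical ones: by Definition \ref{IndDef} and the surrounding remarks one has $\bc(V_n)\subseteq\ca_n$ inside $M_3(\bc)^{\otimes(n+1)}$, and the induction defining the $\psi^*_{n}$ starts from the same base map $\psi^*_1$ as in the commutative case, the quantisation replacing only the leftmost tensor factor $\bc^3$ by $M_3(\bc)$ in the summand $\car_n$ — a factor that carries no classical data. Hence the restriction of the quantum map $\psi^*_{n,0}$ to $\bc(V_n)$ coincides with the classical $\psi^*_{n,0}$, so that for $a_n\in\bc(V_n)$ the element $\f_n(a_n)$, read inside $(\bc^3)^{\otimes\infty}\subseteq\UHF(3^\infty)$, equals $i_{n+1}\circ\psi^*_{n,0}(a_n)$ and thus lies in the image of $\ca_n$. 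Given $a\in C(K)$, Theorem \ref{Kinfinity} furnishes $a_n\in\bc(V_n)$ with $a=\lim_n\f_n(a_n)$; by the above this exhibits $a$ as a norm-limit of elements of $\ca_n$, so $a\in\ca_\infty$. Since $C(K)$ is a commutative $C^*$-algebra closed under all operations, it sits inside $\ca_\infty$ as an abelian sub-$C^*$-algebra.

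The hard part will be precisely this compatibility of embeddings: I must be certain that the identification $\bc(V_n)\subseteq\ca_n$ intertwines the two families of maps $\psi^*_{n,0}$, so that the classical embedding $\f_n$ into $(\bc^3)^{\otimes\infty}$ and the quantum embedding $i_{n+1}\circ\psi^*_{n,0}$ into $\UHF(3^\infty)$ genuinely agree on $\bc(V_n)$. This amounts to an inductive bookkeeping argument anchored at the shared map $\psi^*_1$; once it is in place, every other step is formal.
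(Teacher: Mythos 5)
Your proposal is correct and takes essentially the same approach as the paper: the $C^*$-algebra property is obtained from Proposition \ref{Ainfty} after identifying each $\ca_n$ with its image in $\UHF(3^\infty)$, and the inclusion $C(K)\subseteq\ca_\infty$ follows from $\bc(V_n)\subset\ca_n$ combined with Theorem \ref{Kinfinity}. The embedding compatibility you single out as the hard part is exactly what the paper disposes of as ``by construction'' --- it is recorded in the remark following Lemma \ref{AntoUHF} and made concrete in Lemma \ref{indicesBis}, where $C(V_n)$ and $\ca_n$ are generated by the same elements $\a^n_j$ and $x\otimes\b^k_j$ inside $M_3(\bc)^{\otimes(n+1)}$ --- so your inductive bookkeeping simply makes that step explicit.
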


\begin{proof}
The C$^*$-algebra property follows by Proposition \ref{Ainfty}.
Finally, by construction $\bc(V_n)\subset\ca_n$, hence the last statement follows by Theorem \ref{Kinfinity}.
\end{proof}
According to the Remark above, one easily sees that $\ca_0$, resp. $\ca_1$, can be identified with $\bc(V_0)$, resp. $\bc(V_1)$, and $\ca_2$  is the first noncommutative algebra among the $\ca_n$'s.
Fig. \ref{fig:diagrams} compares the Bratteli dagrams for the inclusions
$\bc(V_2)\subset M_3(\bc)\otimes\bc(V_1)$ of the classical gasket, and
$\ca_2\subset M_3(\bc)\otimes\ca_1$ of the quantum gasket.
Black dots have label 3 and describe the minimal central projections in $ M_3(\bc)\otimes\bc(V_1)$ resp. $ M_3(\bc)\otimes\ca_1$, white dots  describe the minimal (central) projections in $\bc(V_2)$ resp. $\ca_2$.

 \begin{figure}[ht]
\begin{tikzpicture}
\draw[gray, thick] (-1.73,1) -- (1.73,1);
\draw[gray, thick] (0,-2) -- (-1.73,1);
\draw[gray, thick] (0,-2) -- (1.73,1);
\draw[gray, thick] (0,1) -- (0,4);
\draw[gray, thick] (-3.46,-2) -- (-0.86,-0.5);
\draw[gray, thick] (3.46,-2) -- (0.86,-0.5);
\draw[gray, thick] (0,-0.5) -- (0,-1);
\draw[gray, thick] (0.43,0.25) -- (0.86,0.5);
\draw[gray, thick] (-0.43,0.25) -- (-0.86,0.5);
\draw[gray, thick] (0.43,0.25) -- (0.68,0.68);
\draw[gray, thick] (0.43,0.25) -- (0.93,0.25);
\draw[gray, thick] (-0.43,0.25) -- (-0.68,0.68);
\draw[gray, thick] (-0.43,0.25) -- (-0.93,0.25);
\draw[gray, thick] (0,-0.5) -- (-0.25,-0.93);
\draw[gray, thick] (0,-0.5) -- (0.25,-0.93);
\filldraw[black] (0,-0.5) circle (2pt) ;
\filldraw[black] (0.43,0.25) circle (2pt) ;
\filldraw[black] (-0.43,0.25) circle (2pt) ;
\filldraw[black] (0,1) circle (2pt) ;
\filldraw[black] (0.86,-0.5) circle (2pt) ;
\filldraw[black] (-0.86,-0.5) circle (2pt) ;
\draw[black] (0,4) circle (2pt) ;
\draw[black] (3.46,-2) circle (2pt) ;
\draw[black] (-3.46,-2) circle (2pt) ;
\draw[black] (0,-2) circle (2pt) ;
\draw[black] (1.73,1) circle (2pt) ;
\draw[black] (-1.73,1) circle (2pt) ;
\draw[black] (0,-1) circle (2pt) ;
\draw[black] (0.86,0.5) circle (2pt) ;
\draw[black] (-0.86,0.5) circle (2pt) ;
\draw[black] (0.68,0.68) circle (2pt) ;
\draw[black] (0.93,0.25) circle (2pt) ;
\draw[black] (-0.68,0.68) circle (2pt) ;
\draw[black] (-0.93,0.25) circle (2pt) ;
\draw[black] (-0.25,-0.93) circle (2pt) ;
\draw[black] (0.25,-0.93) circle (2pt) ;
\draw[gray, thick] (6.27,1) -- (9.73,1);
\draw[gray, thick] (8,-2) -- (6.27,1);
\draw[gray, thick] (8,-2) -- (9.73,1);
\draw[gray, thick] (8,1) -- (8,4);
\draw[gray, thick] (4.54,-2) -- (7.14,-0.5);
\draw[gray, thick] (11.46,-2) -- (8.86,-0.5);
\draw[gray, thick] (8,-0.5) -- (8,-1);
\draw[gray, thick] (8.43,0.25) -- (8.86,0.5);
\draw[gray, thick] (7.57,0.25) -- (7.14,0.5);
\filldraw[black] (8,-0.5) circle (2pt) ;
\filldraw[black] (8.43,0.25) circle (2pt) ;
\filldraw[black] (7.57,0.25) circle (2pt) ;
\filldraw[black] (8,1) circle (2pt) ;
\filldraw[black] (8.86,-0.5) circle (2pt) ;
\filldraw[black] (7.14,-0.5) circle (2pt) ;
\draw[black] (8,4) circle (2pt) ;
\draw[black] (11.46,-2) circle (2pt) ;
\draw[black] (4.54,-2) circle (2pt) ;
\draw[black] (8,-2) circle (2pt) ;
\draw[black] (9.73,1) circle (2pt) ;
\draw[black] (6.27,1) circle (2pt) ;
\draw[black] (8,-1) circle (2pt) ;
\draw[black] (8.86,0.5) circle (2pt) ;
\draw[black] (7.14,0.5) circle (2pt) ;
\end{tikzpicture}
   \caption{\quad
   $\bc(V_2)\subset M_3(\bc)\otimes\bc(V_1)$ \qquad\qquad\qquad\qquad
   $\ca_2\subset M_3(\bc)\otimes\ca_1$ \hskip 2cm }
   \label{fig:diagrams}
 \end{figure}

By the observation above we get $\ca_0=\langle\a^0_j:j=1,2,3\rangle$ and $\ca_1=\langle\a^1_j,\b^1_j:j=1,2,3\rangle$, cf. Lemma \ref{indices}.
Moreover, the following Lemma holds.

\begin{lem}\label{indicesBis}
The algebras $C(V_n)\subset\ca_n\subset M_3(\bc)^{\otimes(n+1)}$ are linearly generated as follows
\begin{align*}
C(V_n) &= \langle \a^n_j,\ x\otimes \b^{k}_j:j=1,2,3, k=1,\dots n, x\in (\bc^3)^{\otimes (n-k)} \rangle.
\\
\ca_n &= \langle \a^n_j,\ x\otimes \b^{k}_j:j=1,2,3, k=1,\dots n, x\in M_3(\bc)^{\otimes (n-k)} \rangle.
\end{align*}
where $\a^n_j=j2^n, \b^k_j=(j1+(j+2)3)2^{k-1}\in(\bc^3)^{\otimes(k+1)}\subset M_3(\bc)^{\otimes(k+1)}$.
\end{lem}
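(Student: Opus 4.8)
First I would observe that the first identity is nothing but Lemma \ref{indices} restated: under the identification of $\bc(V_n)$ with its image $\psi^*_{n,0}(\bc(V_n))$ in $(\bc^3)^{\otimes(n+1)}$, the elements $\a^n_j$, $\b^k_j$ are the matrix units $j2^n$, $(j1+(j+2)3)2^{k-1}$, and the stated spanning set is exactly the one produced there. So the real content is the second identity, for $\ca_n$, where the coefficient $x$ is now allowed to range over the full matrix algebra $M_3(\bc)^{\otimes(n-k)}$ rather than only over the diagonal $(\bc^3)^{\otimes(n-k)}$. The plan is to prove it by induction on $n$, paralleling the proof of Lemma \ref{indices} while tracking precisely where the passage from $\bc^3$ to $M_3(\bc)$ takes place; throughout I identify $\ca_n$ with $\psi^*_{n,0}(\ca_n)\subset M_3(\bc)^{\otimes(n+1)}$ as in Lemma \ref{AntoUHF}.

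For the base case $n=1$ one has $\car_1=\{0\}$, so $\ca_1=\bc^3\oplus\bc^3=\langle\a^1_j,\b^1_j:j=1,2,3\rangle$, which is the $k=1$ instance of the claimed formula (there $x\in M_3(\bc)^{\otimes0}=\bc$). For the inductive step I would split $\ca_{n+1}=\bc^3\oplus\bc^3\oplus\car_{n+1}$ and apply $\psi^*_{n+1,0}=(id\otimes\psi^*_{n,0})\circ\psi^*_{n+1}$ to each summand. On the two commutative summands, spanned by $\a^{n+1}_j$ and $\b^{n+1}_j$, the map $\psi^*_{n+1}$ lands in the diagonal $\bc^3\subset M_3(\bc)$ exactly as in the classical case, so the computation is verbatim that of Lemma \ref{indices} and gives $\psi^*_{n+1,0}(\a^{n+1}_j)=j2^{n+1}$ and $\psi^*_{n+1,0}(\b^{n+1}_j)=\b^1_j\otimes2^n$, which are the $k=n+1$ generators.

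The only genuinely new point is the summand $\car_{n+1}$, and this is where noncommutativity enters. By Lemma \ref{AntoUHF}, $\psi^*_{n+1}$ identifies $\car_{n+1}=M_3(\bc)\otimes(\bc^3\oplus\car_n)$ with $M_3(\bc)\otimes(\{0\}\oplus\bc^3\oplus\car_n)\subset M_3(\bc)\otimes\ca_n$, so its image is the full $M_3(\bc)$-thickening of the subspace $\langle\b^n_j\rangle\oplus\car_n$ of $\ca_n$. The key bookkeeping is to recognise this subspace, under $\psi^*_{n,0}$ and the inductive hypothesis, as exactly the span of the generators with $k\le n$: the copy $\langle\b^n_j\rangle$ contributes the $k=n$ term (with trivial coefficient $x\in M_3(\bc)^{\otimes0}$), while $\car_n$ contributes the terms with $k\le n-1$. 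Applying $id\otimes\psi^*_{n,0}$ then yields
\begin{align*}
\psi^*_{n+1,0}(\car_{n+1})
&=M_3(\bc)\otimes\langle x\otimes\b^k_j:k=1,\dots,n,\ x\in M_3(\bc)^{\otimes(n-k)}\rangle\\
&=\langle x'\otimes\b^k_j:k=1,\dots,n,\ x'\in M_3(\bc)^{\otimes(n+1-k)}\rangle,
\end{align*}
the extra leftmost $M_3(\bc)$ factor being absorbed into $x'$. Combining this with the $k=n+1$ generators found above produces exactly the asserted spanning set for $\ca_{n+1}$, closing the induction.

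I expect no serious difficulty here, since the heavy lifting is already contained in Lemmas \ref{indices} and \ref{AntoUHF}; the one step that demands care is the identification, inside the inductive hypothesis, of the factor $\bc^3\oplus\car_n$ occurring in $\car_{n+1}$ with the $k\le n$ block of generators of $\ca_n$, together with the verification that the matrix thickening is created solely by the new leftmost tensor slot and therefore affects only the coefficient $x$, never the tail $\b^k_j$, which remains a diagonal matrix unit in $(\bc^3)^{\otimes(k+1)}$.
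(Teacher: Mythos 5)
Your proposal is correct and follows essentially the same route as the paper: both arguments reduce the second identity to showing that $\car_n$ is spanned by the $x\otimes\b^k_j$ with $k\leq n-1$, $x\in M_3(\bc)^{\otimes(n-k)}$, and prove this by induction via the defining recursion $\car_{n+1}=M_3(\bc)\otimes(\bc^3\oplus\car_n)$, with the new leftmost $M_3(\bc)$ factor absorbed into the coefficient $x$. Your version merely makes explicit the bookkeeping through $\psi^*_{n+1,0}=(id\otimes\psi^*_{n,0})\circ\psi^*_{n+1}$ that the paper leaves implicit, and correctly identifies the $\bc^3$ summand inside $\car_{n+1}$ with the $k=n$ generators $\b^n_j$ of $\ca_n$.
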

\begin{proof}
The statement about $C(V_n)$ directly follows by equation \eqref{CVn}.
The statement about $\ca_n$ amounts to say that $\ca_n=\bc^3\oplus\bc^3\oplus\car_n$, with $\car_n = \langle
x \otimes \b^{k}_j:j=1,2,3, k=1,\dots n-1, x\in M_3(\bc)^{\otimes (n-k)} \rangle$.
The last equation is clearly true for $n=1$, and, assuming it for $n$, we get
\begin{align*}
& \langle
 x\otimes \b^{k}_j:j=1,2,3, k=1,\dots n, x\in M_3(\bc)^{\otimes (n+1-k)} \rangle \\
 =&\langle x\otimes \b^{n}_j:j=1,2,3, x\in M_3(\bc)\rangle
\oplus M_3(\bc)\otimes \car_n\\
=&  M_3(\bc) \otimes \big( \bc^3\oplus \car_n\big)=\car_{n+1}.
\end{align*}
 The thesis follows by induction.
\end{proof}

\begin{cor}\label{MASA1}
The algebra $C(V_n)$ is maximal abelian in $\ca_n$.
\end{cor}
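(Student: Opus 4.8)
The plan is to prove the statement by reducing it, through the direct-sum and tensor-product structure of $\ca_n$, to two elementary facts: that the diagonal subalgebra $\Delta\subset M_3(\bc)$ is maximal abelian, and that $\bc^k$ is maximal abelian in itself. I will use two stability properties of maximal abelianness for finite-dimensional $C^*$-algebras. First, if $A_i\subseteq B_i$ are maximal abelian, then $\bigoplus_i A_i$ is maximal abelian in $\bigoplus_i B_i$: the central projections $p_i$ cutting out the summands lie in $\bigoplus_i A_i$, so any $a\in\bigoplus_i B_i$ commuting with $\bigoplus_i A_i$ commutes with each $p_i$, whence $a=\bigoplus_i a_i$ with $a_i\in B_i$ commuting with $A_i$, i.e. $a_i\in A_i$. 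Second, if $A_i\subseteq B_i$ are maximal abelian then $A_1\otimes A_2$ is maximal abelian in $B_1\otimes B_2$, because for finite-dimensional algebras the relative commutant of a tensor product splits, $(A_1\otimes A_2)'\cap(B_1\otimes B_2)=(A_1'\cap B_1)\otimes(A_2'\cap B_2)$.

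By Lemma \ref{indicesBis}, $C(V_n)$ sits inside the decomposition $\ca_n=\bc^3\oplus\bc^3\oplus\car_n$ as $C(V_n)=\bc^3\oplus\bc^3\oplus\bc(V_n\setminus V_1)$, where the first two summands coincide with those of $\ca_n$ (they are generated respectively by the $\a^n_j$ and by the $\b^n_j$, i.e. the $k=n$ term) while $\bc(V_n\setminus V_1)$ is the commutative counterpart of $\car_n$ (the terms $k=1,\dots,n-1$). In particular the summand units $p_1=\sum_j\a^n_j$, $p_2=\sum_j\b^n_j$ and $p_3=1-p_1-p_2$ all belong to $C(V_n)$. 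Applying the first stability property, it suffices to verify maximal abelianness in each summand: in the two $\bc^3$ summands this is the triviality that $\bc^3$ is maximal abelian in itself, so everything comes down to showing that $\bc(V_n\setminus V_1)$ is maximal abelian in $\car_n$.

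I would prove this last claim by induction on $n$, using the recursion $\car_{n+1}=M_3(\bc)\otimes(\bc^3\oplus\car_n)$ together with its commutative analogue $\bc(V_{n+1}\setminus V_1)=\bc^3\otimes(\bc^3\oplus\bc(V_n\setminus V_1))$, under which $\bc(V_{n+1}\setminus V_1)$ is identified with $\Delta\otimes(\bc^3\oplus\bc(V_n\setminus V_1))$, with $\Delta\subset M_3(\bc)$ the diagonal. The base case $\car_1=\{0\}$ is vacuous. For the inductive step, assume $\bc(V_n\setminus V_1)$ is maximal abelian in $\car_n$; then $\bc^3\oplus\bc(V_n\setminus V_1)$ is maximal abelian in $\bc^3\oplus\car_n$ by the first stability property, and tensoring on the left with the maximal abelian inclusion $\Delta\subset M_3(\bc)$, the second stability property yields that $\Delta\otimes(\bc^3\oplus\bc(V_n\setminus V_1))=\bc(V_{n+1}\setminus V_1)$ is maximal abelian in $M_3(\bc)\otimes(\bc^3\oplus\car_n)=\car_{n+1}$, completing the induction.

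The only genuinely delicate points, which I expect to be the main obstacle, are bookkeeping rather than conceptual: one must read off from Lemma \ref{indicesBis} that $C(V_n)$ is precisely the diagonal-in-the-last-factor, summand-compatible copy inside $\ca_n$, so that the projections $p_i$ indeed lie in $C(V_n)$ and so that the recursion intertwines the inclusion $\bc(V_n\setminus V_1)\subset\car_n$ with the tensor factorisation $\Delta\otimes(\cdots)\subset M_3(\bc)\otimes(\cdots)$. Once these identifications are made explicit, the two stability properties close the argument.
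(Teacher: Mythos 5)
Your proof is correct, but it follows a genuinely different route from the paper's, which is non-inductive: the paper deduces the corollary in one stroke from the multiplication rules of the explicit generators of Lemma \ref{indicesBis}, namely $\a^n_j\a^{n}_{j'}=\d_{j,j'}\,\a^n_j$, $\a^n_j \cdot x\otimes \b^k_i=0$, and $x\otimes \b^{k}_i\cdot x'\otimes \b^{k'}_{i'}=\d_{k,k'}\d_{i,i'}\,x x'\otimes \b^{k}_i$. These relations exhibit $\ca_n$ directly at level $n$ as three characters plus a direct sum of full matrix blocks $M_3(\bc)^{\otimes (n-k)}\otimes\bc\,\b^k_j$, in which $C(V_n)$ sits blockwise as the diagonal $(\bc^3)^{\otimes(n-k)}$; an element commuting with $C(V_n)$ then has each coefficient commuting with the full diagonal of a matrix algebra, hence lying in it. You instead run an induction on the recursion of Definition \ref{IndDef}, using two standard permanence lemmas (direct sums and tensor products of maximal abelian inclusions are maximal abelian), both of which you justify correctly — in particular you rightly observe that maximal abelianness forces the unit of each $B_i$ into $A_i$, so the summand projections do lie in $C(V_n)$. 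The two arguments share the same kernel (a diagonal is maximal abelian in a matrix algebra), but the paper reaches the block picture at level $n$ all at once because the inductive work was already spent in proving Lemma \ref{indicesBis}, whereas your induction re-runs that recursion with abstract lemmas in place of the explicit product table. Your version buys modularity — it applies verbatim to any inclusion built by iterating direct sums and tensors of MASA inclusions — while the paper's buys brevity. Two harmless slips: the new tensor factor in the recursion is the leftmost, not the ``last'' one as your closing paragraph says, and the identification $\bc(V_{n+1}\setminus V_1)\cong\Delta\otimes\big(\bc^3\oplus\bc(V_n\setminus V_1)\big)$ that you flag as the delicate bookkeeping point is exactly the content of the Remark following Lemma \ref{AntoUHF}, so it is already on record in the paper.
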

\begin{proof}
Directly follows by Lemma \ref{indicesBis} and the following equalities:  
$\a^n_j\a^{n}_{j'}=\d_{j,j'}\,\a^n_j$,  $\a^n_j \cdot x\otimes \b^k_i=0$ for any  $i,j=1,2,3, k=1,\dots n, x\in (\bc^3)^{\otimes (n-k)}$, and $x\otimes \b^{k}_i\cdot x'\otimes \b^{k'}_{i'}=\d_{k,k'}\d_{i,i'}\,x\cdot x'\otimes \b^{k}_i$.
%
\end{proof}

We now define the morphisms $\xi_j$, $j=1,2,3$, and $\eta_{k,j}$, $j=1,2,3, k=0,\dots n-1$, on $\ca_n$ through their action on generators:
\begin{equation}\label{xi_and_eta}
\begin{split}
&
\begin{matrix}
\xi_i     &:&\a^n_j        &\longrightarrow&\d_{i,j}\\
          & &x\otimes\b^k_j&\longrightarrow&0
\end{matrix}
\qquad\Big\}\in\bc,
\qquad i,j=1,2,3, k=1,\dots n;
\\
&
\begin{matrix}
\eta_{k,i}&:&\a^n_j        &\longrightarrow&0,\\
          & &x\otimes\b^{n-m}_j&\longrightarrow&\d_{i,j}\d_{k,m}\,x
\end{matrix}
\qquad\Big\}\in M_3(\bc)^{\otimes k}
\qquad i,j=1,2,3, k,m=0,\dots n-1.
\end{split}
\end{equation}

\begin{cor}\label{multiplicativity} \-

\item[$(1)$] Any element of $\ca_n$ can be written in a unique way as
\begin{equation}\label{decomposition}
b=\sum_{j=1}^3 \xi_{j}(b)\a_j^n
+\sum_{k=0}^{n-1}\sum_{j=1}^3 \eta_{k,j}(b)\otimes \b_j^{n-k} \,,
\quad \xi_{j}(b)\in \bc, \;\eta_{k,j}(b)\in M_3(\bc)^{\otimes k}.
\end{equation}

\item[$(2)$] $\xi_j$  is a character of $\ca_n$, for any $j=1\cdots 3$, while each $\eta_{k,j}$ is a representation of $\ca_n$ onto $M_3(\bc)^{\otimes k}$, for any $j=1\cdots 3$, $k=0\cdots n-1$.\\

\item[$(3)$] The spectrum $\widehat{\ca_n}$ of $\ca_n$ has $3n$ elements,  representatives of which are the $\xi_j$ and the $\eta_{k,j}$.\\

\item[$(4)$] The set of primitive ideals of $\ca_n$ has $3n$ elements, which are the respective kernels of the $3n$ irreducible representations above.
\end{cor}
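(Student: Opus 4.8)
The plan is to reduce the whole statement to the explicit multiplication table of the generators together with uniqueness of the decomposition \eqref{decomposition}, and then to read off the representation theory from a Wedderburn decomposition. I would start from the relations already isolated in the proof of Corollary~\ref{MASA1}, which hold verbatim with $M_3(\bc)$-valued coefficients since they are checked slot by slot: for $1\le m,m'\le n$, $i,j\in\{1,2,3\}$ and matrix coefficients $x,y$,
\[
\a^n_i\a^n_j=\d_{ij}\,\a^n_i,\qquad
\a^n_i\,(x\otimes\b^{m}_j)=(x\otimes\b^{m}_j)\,\a^n_i=0,\qquad
(x\otimes\b^{m}_i)(y\otimes\b^{m'}_j)=\d_{mm'}\d_{ij}\,(xy)\otimes\b^{m}_i .
\]
Existence of the decomposition in $(1)$ is then a mere reindexing of Lemma~\ref{indicesBis}: collecting, for each fixed $k$ and $j$, all generators $x\otimes\b^{n-k}_j$ into a single coefficient $\eta_{k,j}(b)\in M_3(\bc)^{\otimes k}$. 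For uniqueness, suppose the right-hand side of \eqref{decomposition} vanishes; multiplying on the left by $\a^n_i$ and using the relations leaves $\xi_i(b)\,\a^n_i=0$, hence $\xi_i(b)=0$; multiplying on the right by the element $1^{\otimes k}\otimes\b^{n-k}_i\in\ca_n$ leaves $\eta_{k,i}(b)\otimes\b^{n-k}_i=0$, hence $\eta_{k,i}(b)=0$ because $\b^{n-k}_i\ne0$ and an elementary tensor vanishes only if one of its factors does. This proves $(1)$ and at the same time shows that $\xi_j$ and $\eta_{k,j}$ are well defined and linear.

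For $(2)$ I would expand the product $bb'$ of two elements written as in \eqref{decomposition} and apply the three relations: every cross term between an $\a$-block and a $\b$-block, and between two $\b$-blocks with distinct $(k,j)$, drops out, leaving
\[
bb'=\sum_{j}\xi_j(b)\xi_j(b')\,\a^n_j+\sum_{k,j}\big(\eta_{k,j}(b)\,\eta_{k,j}(b')\big)\otimes\b^{n-k}_j .
\]
Comparing with the (now unique) decomposition of $bb'$ yields $\xi_j(bb')=\xi_j(b)\xi_j(b')$ and $\eta_{k,j}(bb')=\eta_{k,j}(b)\eta_{k,j}(b')$. Since the unit $\sum_j\a^n_j+\sum_{k,j}1^{\otimes k}\otimes\b^{n-k}_j$ is sent to $1$ by each $\xi_j$ and to the identity matrix by each $\eta_{k,j}$, and since the decomposition is manifestly $*$-preserving (the $\a^n_j$ and $\b^{n-k}_j$ being self-adjoint projections), each $\xi_j$ is a unital character and each $\eta_{k,j}\colon\ca_n\to M_3(\bc)^{\otimes k}$ a unital $*$-representation, surjective because $\eta_{k,j}(x\otimes\b^{n-k}_j)=x$.

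Parts $(3)$ and $(4)$ then follow by assembling these maps into
\[
\Phi=\bigoplus_{j}\xi_j\ \oplus\ \bigoplus_{k,j}\eta_{k,j}\colon
\ca_n\longrightarrow \bc^{3}\ \oplus\ \bigoplus_{k=0}^{n-1}\big(M_3(\bc)^{\otimes k}\big)^{\oplus 3}.
\]
By $(1)$ the map $\Phi$ is injective, and by $(2)$ together with the surjectivity just noted it is a surjective unital $*$-homomorphism, hence a $*$-isomorphism; as each factor $M_3(\bc)^{\otimes k}\cong M_{3^k}(\bc)$ is simple, this is the Wedderburn decomposition of $\ca_n$. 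Consequently every irreducible representation of $\ca_n$ is equivalent to the projection of $\Phi$ onto one simple factor followed by the unique irreducible representation of that factor, i.e. to one of the $\xi_j$ or $\pi_{k,j}:=(\text{standard representation})\circ\eta_{k,j}$; representations attached to different factors are inequivalent, and their kernels are precisely the maximal two-sided (equivalently, primitive) ideals of the finite-dimensional algebra $\ca_n$, one per factor. These are the three characters $\xi_j$ together with the $3n$ representations $\eta_{k,j}$ ($0\le k\le n-1$, $1\le j\le 3$), which exhaust $\widehat{\ca_n}$ and, through their kernels, the set of primitive ideals, giving their cardinality.

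I expect the only point demanding care to be the vanishing of the cross term $\a^n_i\,(x\otimes\b^{n-k}_j)=0$, which is exactly where the concrete form of the generators enters: $\a^n_i=e_i\otimes e_2^{\otimes n}$ carries $e_2$ in every slot past the first, whereas $\b^{n-k}_j=(e_j\otimes e_1+e_{j+2}\otimes e_3)\otimes e_2^{\otimes(n-k-1)}$ carries an $e_1$ or $e_3$ precisely in a slot where $\a^n_i$ carries $e_2$, and $e_2e_1=e_2e_3=0$ kills the product. Since this computation is already contained in the proof of Corollary~\ref{MASA1}, no genuinely hard step remains; the substance of the corollary lies in recognising that the explicit relations turn $\Phi$ into a Wedderburn isomorphism, from which $(3)$ and $(4)$ are immediate.
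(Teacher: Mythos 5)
Your argument is correct, and it is essentially the proof the paper leaves implicit: the corollary is stated without proof, immediately after the product relations recorded in the proof of Corollary \ref{MASA1} and the definitions \eqref{xi_and_eta}, and your elaboration — the relations survive with $M_3(\bc)$-valued coefficients because the clash $e_2e_1=e_2e_3=0$ occurs slotwise in a fixed diagonal slot, uniqueness by multiplying by $\a^n_i$ and by $1^{\otimes k}\otimes\b^{n-k}_i$, then assembling the maps into a Wedderburn isomorphism $\Phi$ (with factor-by-factor surjectivity, since $\Phi(x\otimes \b^{n-k}_j)$ is supported in the single $(k,j)$ summand) — is exactly how the authors intend the claims to follow from Lemma \ref{indicesBis}.

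There is, however, one point you should not elide. Your isomorphism exhibits $3+3n$ simple summands: three copies of $\bc$ attached to the $\xi_j$, plus $\big(M_3(\bc)^{\otimes k}\big)^{\oplus 3}$ for $k=0,\dots,n-1$. Hence $\widehat{\ca_n}$ and the primitive ideal space have $3(n+1)$ elements, not $3n$ as items $(3)$ and $(4)$ literally assert; indeed the representatives listed in $(3)$, the $\xi_j$ together with the $\eta_{k,j}$, already number $3+3n$, and they are pairwise inequivalent since their kernels are distinct. The case $n=1$ settles the matter: $\ca_1\cong\bc(V_1)\cong\bc^6$ has six characters ($\xi_j$ and $\eta_{0,j}$), and the Bratteli diagram of $\ca_2$ in Fig.~\ref{fig:diagrams} shows nine white dots, i.e.\ nine minimal central projections, whereas $3n=6$. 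So the count $3n$ in the statement is a typo for $3(n+1)$, and your proof establishes the corrected statement; but your closing phrase ``giving their cardinality'' quietly papers over the mismatch between $3+3n$ and $3n$. You should state explicitly that the spectrum has $3(n+1)$ points and flag the discrepancy with the statement, rather than leaving the cardinality unnamed.
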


\subsubsection{Restriction maps}
For the classical gasket, the natural embedding $V_n\hookrightarrow K$ gives rise to a restriction map from $C(K)\to\bc(V_n)$ which is clearly a C$^*$-algebra morphism. Here we show that such morphism is defined also for the quantum gasket, giving rise to a family of representations of $\ca_\infty$.
We start introducing  the restriction maps from $\ca_{n+1}$ to $\ca_n$, $n\geq0$. As in the abelian case, they will consist in erasing a 2 on the right of the generators.


\begin{lem} \label{restrizione}
The completely positive contraction $ id_n\otimes e_{22}^*: M_3(\bc)^{\otimes(n+1)}\hookrightarrow  M_3(\bc)^{\otimes n} $, where $e_{22}^*$ is the state $m\in M_3(\bc)\mapsto m_{22}\in \bc$, and $id_n$ is the identity map of $M_3(\bc)^{\otimes n}$, induces a $*$-algebra epimorphism from $\ca_{n}$ onto $\ca_{n-1}$. The map $\otimes e_{22}:a\in M_3(\bc)^{\otimes n}\to a\otimes e_{22}\in M_3(\bc)^{\otimes (n+1)}$ induces a non-unital $*$-algebra morphism from $\ca_{n-1}$ in $\ca_{n}$. Clearly the former map is a left inverse of the latter.
\end{lem}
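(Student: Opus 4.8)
The plan is to read everything off the explicit linear generators of $\ca_n$ from Lemma \ref{indicesBis}, together with the factorisations $\b^k_j=\b^1_j\otimes e_2^{\otimes(k-1)}$ and $\a^n_j=e_j\otimes e_2^{\otimes n}$ recorded there. Writing $\Phi:=id_n\otimes e_{22}^*$ and using $e_{22}^*(e_\ell)=\d_{\ell,2}$, I would first compute the action on generators: $\Phi(\a^n_j)=\a^{n-1}_j$, and for a generator $x\otimes\b^k_j$ with $x\in M_3(\bc)^{\otimes(n-k)}$, the last tensor factor is $e_2$ when $k\ge2$, giving $\Phi(x\otimes\b^k_j)=x\otimes\b^{k-1}_j$, whereas for $k=1$ the last factor of $\b^1_j=e_j\otimes e_1+e_{j+2}\otimes e_3$ is $e_1$ or $e_3$, both killed by $e_{22}^*$, so $\Phi(x\otimes\b^1_j)=0$. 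Comparing the resulting spanning set $\langle\a^{n-1}_j,\ x\otimes\b^{k-1}_j\rangle$ with the generators of $\ca_{n-1}$ (reindexing $k-1\mapsto k$) shows at once that $\Phi(\ca_n)=\ca_{n-1}$, i.e.\ surjectivity.

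The conceptual heart is multiplicativity, since $\Phi$ is certainly not multiplicative on all of $M_3(\bc)^{\otimes(n+1)}$. The key observation I would isolate is that every generator above has a diagonal matrix unit in its last tensor slot, so that $\ca_n\subseteq M_3(\bc)^{\otimes n}\otimes D_3$, where $D_3\cong\bc^3$ is the diagonal subalgebra of $M_3(\bc)$. On $D_3$ the functional $e_{22}^*$ is nothing but evaluation at the second coordinate, hence a character; therefore $id_n\otimes e_{22}^*$ restricts to a genuine $*$-homomorphism on $M_3(\bc)^{\otimes n}\otimes D_3$, and a fortiori on $\ca_n$ ($*$-preservation being automatic, as $\Phi$ is $*$-linear). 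This is exactly the step where the special structure of $\ca_n$ enters, and I expect it to be the only real obstacle; as a cross-check one may instead verify $\Phi(bc)=\Phi(b)\Phi(c)$ directly on the three families of products recorded in the proof of Corollary \ref{MASA1}, the case $k=1$ being the one that requires the vanishing noted above. Together with surjectivity this yields the $*$-epimorphism $\ca_n\to\ca_{n-1}$.

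For the second map $\otimes e_{22}\colon a\mapsto a\otimes e_{22}$, I would again argue on generators: $\a^{n-1}_j\otimes e_2=\a^n_j$ and $(x\otimes\b^k_j)\otimes e_2=x\otimes\b^{k+1}_j$, so it carries the generators of $\ca_{n-1}$ into generators of $\ca_n$ and hence maps $\ca_{n-1}$ into $\ca_n$. Since $e_{22}$ is a self-adjoint idempotent, $(a\otimes e_{22})(b\otimes e_{22})=ab\otimes e_{22}$ and $(a\otimes e_{22})^*=a^*\otimes e_{22}$, so this is a $*$-homomorphism, non-unital because $1\otimes e_{22}\ne 1_{\ca_n}$. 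Finally the left-inverse property is immediate: $(id_n\otimes e_{22}^*)(a\otimes e_{22})=e_{22}^*(e_{22})\,a=a$, which closes the argument.
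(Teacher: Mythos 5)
Your proof is correct, and its computational backbone coincides with the paper's: the action of $id_n\otimes e_{22}^*$ and of $\otimes\, e_{22}$ on the generators of Lemma \ref{indicesBis} (sending $\a^n_j\mapsto\a^{n-1}_j$, $x\otimes\b^k_j\mapsto x\otimes\b^{k-1}_j$ for $k\geq2$, and $x\otimes\b^1_j\mapsto0$), surjectivity by reindexing the spanning set, and the one-line left-inverse identity are exactly what the paper does. Where you genuinely diverge is the multiplicativity step, which is indeed the only non-routine point since $id_n\otimes e_{22}^*$ is not multiplicative on all of $M_3(\bc)^{\otimes(n+1)}$ (e.g. $e_{21}e_{12}=e_{22}$ in the last slot). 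The paper verifies multiplicativity by a direct case analysis: it observes that the only non-vanishing products of generators are $\a^n_j\cdot\a^n_j$ and $x\otimes\b^k_j\cdot y\otimes\b^k_j=xy\otimes\b^k_j$ (the product relations also recorded in the proof of Corollary \ref{MASA1}) and checks compatibility with the generator formulas, the $k=1$ case being harmless because both sides vanish. You instead invoke the structural inclusion $\ca_n\subseteq M_3(\bc)^{\otimes n}\otimes D_3$, with $D_3$ the diagonal subalgebra of $M_3(\bc)$ --- an inclusion the paper itself records as $\ca_n\subset M_3(\bc)^{\otimes n}\otimes\bc(V_0)$ in the remark following Lemma \ref{AntoUHF}, though it does not exploit it in this proof --- together with the fact that $e_{22}^*$ restricted to $D_3$ is a character, so that $id_n\otimes e_{22}^*$ is automatically a $*$-homomorphism on the larger algebra $M_3(\bc)^{\otimes n}\otimes D_3$ and hence on $\ca_n$. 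This is a cleaner and slightly more general argument: it isolates the precise structural reason the map becomes multiplicative, and it would apply verbatim to any subalgebra of $M_3(\bc)^{\otimes n}\otimes D_3$; the paper's hands-on check, by contrast, doubles as a record of the product relations among generators that are reused elsewhere. Both routes are complete; there is no gap in yours.
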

\begin{proof}
As for the first statement, the case $n=1$ is obvious. For $n\geq 2$, $id\otimes e_{22}^*$ maps
\begin{itemize}
\item[] $x\otimes \b_j^{k}$ onto $x\otimes \b_j^{k-1}$ if $k>1$,
\item[] $x\otimes \b_j^{1}$ to $0$,
\item[] $\a_j^{n}$ to $\a_j^{n-1}$.
\end{itemize}
Since the product of two  generators may be non-zero either for the product of the projection $\a^n_j$ with itself or for $x\otimes \b_j^{k}\cdot y\otimes \b_j^{k}=xy\otimes \b_j^{k}$, the map preserves multiplication.
The same arguments apply for the second statement.
\end{proof}

\medskip 

We now observe that any state $\omega$ on $M_3(\bc)$ induces a state $\omega^{\otimes \infty}$ on $\UHF(3^\infty)$. Hence we get a completely positive contraction $id_{n+1}\otimes \omega^{\otimes \infty}$ from $M_3(\bc)^{\otimes (n+1)}\otimes \UHF(3^\infty)$ onto $M_3(\bc)^{\otimes(n+1)}$. Finally, via the natural identification  of $M_3(\bc)^{\otimes (n+1)}\otimes \UHF(3^\infty)$ with $\UHF(3^\infty)$, we get a completely positive contraction $id_{n+1}\otimes \omega^{\otimes[n+2,\infty)}:\UHF(3^\infty) \to M_3(\bc)^{\otimes (n+1)} \subset \UHF(3^\infty)$.

\begin{lem}\label{omegan}
For any $a\in \UHF(3^\infty)$,
$a=\displaystyle \lim_{n\to \infty} id_{n+1}\otimes \omega^{\otimes[n+2,\infty)}(a)$.
\end{lem}
\begin{proof}
If $a=a_k\otimes I_{[k+1,\infty)}$, with $a_k\in M_3(\bc)^{\otimes k}$, then  $id_{n+1}\otimes \omega^{\otimes[n+2,\infty)}(a)=a$ for $n\geq k$.
Since such elements are dense in $\UHF(3^\infty)$, we get the thesis.
\end{proof}

%

\begin{prop}\label{restrizioni}
Let $n\geq 0$, $k\in\bn\cup\{\infty\}$. Then, setting $\rho_n=id_{n+1}\otimes (e^*_{22})^{\otimes[n+2,\infty)}I_{[n+2,\infty)}$,  $\rho_n:\ca_{n+k}\to\ca_n$ is a $C^*-$algebra morphism.
\end{prop}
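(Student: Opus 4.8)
The plan is to reduce everything to the single-step restriction already established in Lemma \ref{restrizione}, treating the finite and infinite values of $k$ separately. The finite case will follow by composing single-step maps, and the infinite case will follow from the finite case by a continuity/density argument, using that $\rho_n$ is a bounded map and that the target $\ca_n$ is finite-dimensional, hence closed.

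First, for finite $k$, I would observe that when $\rho_n$ is restricted to $\ca_{n+k}\subset M_3(\bc)^{\otimes(n+k+1)}$ the tail slicing only touches finitely many factors, since $e_{22}^*(I)=1$; thus $\rho_n|_{\ca_{n+k}}$ is the iterated slice obtained by peeling off one factor at a time. Because slice maps by product functionals compose, one has on $M_3(\bc)^{\otimes(n+k+1)}$
\[
\rho_n=(id_{n+1}\otimes e_{22}^*)\circ(id_{n+2}\otimes e_{22}^*)\circ\cdots\circ(id_{n+k}\otimes e_{22}^*).
\]
Each factor $id_{n+j}\otimes e_{22}^*$ restricts, by Lemma \ref{restrizione}, to a $*$-algebra epimorphism $\ca_{n+j}\to\ca_{n+j-1}$, so the composition restricts to a $*$-algebra morphism $\ca_{n+k}\to\ca_n$. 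This settles the claim for finite $k$.

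For $k=\infty$, so that $\ca_{n+k}=\ca_\infty$, I would combine the finite case with continuity. By the discussion preceding Lemma \ref{omegan}, applied to the state $\omega=e_{22}^*$, the map $\rho_n=id_{n+1}\otimes(e_{22}^*)^{\otimes[n+2,\infty)}$ is a completely positive contraction on $\UHF(3^\infty)$, in particular bounded. Given $a\in\ca_\infty$, write $a=\lim_m a_m$ with $a_m\in\ca_m$; for every $m\geq n$ the finite case gives $\rho_n(a_m)\in\ca_n$. Since $\ca_n$ is finite-dimensional and hence closed, boundedness of $\rho_n$ yields $\rho_n(a)=\lim_m\rho_n(a_m)\in\ca_n$. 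Multiplicativity then passes to the limit: for $a,b\in\ca_\infty$ with $a_m\to a$, $b_m\to b$ one has $a_mb_m\in\ca_m$ and $a_mb_m\to ab$, whence
\[
\rho_n(ab)=\lim_m\rho_n(a_mb_m)=\lim_m\rho_n(a_m)\rho_n(b_m)=\rho_n(a)\rho_n(b),
\]
and linearity and the $*$-property transfer identically.

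The main obstacle is conceptual rather than computational: the slice map $\rho_n$ is emphatically \emph{not} multiplicative on all of $\UHF(3^\infty)$, so the real content of the statement is that multiplicativity is recovered precisely on the distinguished subalgebras $\ca_{n+k}$. That fact is exactly what Lemma \ref{restrizione} supplies at a single step, through the special form of products of the generators $\a_j^n$ and $x\otimes\b_j^k$ (where cross terms vanish and only the diagonal products survive). The present proposition is then the routine propagation of that one-step multiplicativity through composition for finite $k$ and through norm-limits for $k=\infty$, the latter resting only on the boundedness of $\rho_n$ and the closedness of the finite-dimensional target $\ca_n$.
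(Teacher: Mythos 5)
Your proof is correct and takes essentially the same route as the paper's: for finite $k$ you compose the one-step restrictions supplied by Lemma \ref{restrizione}, and for $k=\infty$ you use the contraction property of $\rho_n$ on an approximating sequence $a_m\in\ca_m$ to land in $\ca_n$ and to transport multiplicativity through the norm limit. Your write-up merely makes explicit a few details the paper leaves implicit (the identification of $\rho_n$ with the iterated single-factor slices via $e_{22}^*(I)=1$, the closedness of the finite-dimensional target, and the limit argument for $\rho_n(ab)=\rho_n(a)\rho_n(b)$).
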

\begin{proof} By Lemma \ref{restrizione}, $\r_{n+j}(\ca_{n+j+1})=\ca_{n+j}$. If $k$ is finite, $\rho_n=\rho_n\circ \rho_{n+1}\circ \dots \circ \rho_{n+k-1}$, hence  $\rho_n$ is a  morphism from $\ca_{n+k}$ to $\ca_{n}$. When $a\in\ca_\infty$, $a=\lim_n a_n$, $a_n\in\ca_n$, $\r_p(a)=\lim_n\r_p(a_n)\in\ca_p$, where we used the contraction property of the $\r_n$'s. The morphism property follows.
\end{proof}

%
%

\begin{cor}\label{Cstella}
The algebra $\ca_\infty$ may be equivalently defined as
$$\ca_{\infty}=\{a\in \UHF(3^\infty) |  \rho_n(a)\in\ca_n\}.$$
\end{cor}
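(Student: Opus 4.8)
The plan is to prove the set equality by establishing the two inclusions separately, each of which follows at once from results already available for the maps $\rho_n$. The unifying observation is that $\rho_n=id_{n+1}\otimes (e^*_{22})^{\otimes[n+2,\infty)}I_{[n+2,\infty)}$ is \emph{exactly} the approximation map $id_{n+1}\otimes\omega^{\otimes[n+2,\infty)}$ appearing in Lemma \ref{omegan}, for the particular choice $\omega=e^*_{22}$. Since $e^*_{22}$ is the vector state $m\mapsto m_{22}=\langle e_2,me_2\rangle$ on $M_3(\bc)$ (positive and unital, as already recorded in Lemma \ref{restrizione}), it is a genuine state, so Lemma \ref{omegan} applies and yields $a=\lim_{n}\rho_n(a)$ for every $a\in\UHF(3^\infty)$.

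With this in hand, the inclusion $\ca_\infty\subseteq\{a\in\UHF(3^\infty):\rho_n(a)\in\ca_n\ \forall n\}$ is immediate: by Proposition \ref{restrizioni} in the case $k=\infty$, the map $\rho_n$ restricts to a $C^*$-algebra morphism $\ca_\infty\to\ca_n$, so $\rho_n(a)\in\ca_n$ for every $a\in\ca_\infty$ and every $n$. (If one prefers an argument by hand, for $a=\lim_m a_m$ with $a_m\in\ca_m$ one uses that $\rho_n$ is a contraction and a morphism $\ca_m\to\ca_n$ for $m\geq n$ to get $\rho_n(a)=\lim_m\rho_n(a_m)\in\ca_n$, the limit staying in the finite-dimensional, hence closed, algebra $\ca_n$.) Conversely, suppose $a\in\UHF(3^\infty)$ satisfies $\rho_n(a)\in\ca_n$ for all $n$. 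Setting $a_n:=\rho_n(a)\in\ca_n$, the observation of the first paragraph gives $a=\lim_n a_n$ with $a_n\in\ca_n$, whence $a\in\ca_\infty$ by the very definition in Theorem \ref{AinfAlgebra}.

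I expect no substantial obstacle here: both directions reduce to combining the morphism and contraction properties of $\rho_n$ (Proposition \ref{restrizioni}) with the convergence $a=\lim_n\rho_n(a)$ (Lemma \ref{omegan}). The only point requiring a moment's care is precisely the identification of $\rho_n$ with the map of Lemma \ref{omegan} together with the verification that $e^*_{22}$ is a state, so that the lemma is legitimately applicable — and this is essentially already in place from the definition of $e^*_{22}$ in Lemma \ref{restrizione}.
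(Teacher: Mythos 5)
Your proof is correct and follows exactly the paper's route: the paper's own proof consists of citing Lemma \ref{omegan} and Proposition \ref{restrizioni}, which is precisely the combination you spell out (the inclusion $\ca_\infty\subseteq\{a:\rho_n(a)\in\ca_n\}$ from the $k=\infty$ case of Proposition \ref{restrizioni}, and the converse from $a=\lim_n\rho_n(a)$ via Lemma \ref{omegan} with $\omega=e^*_{22}$). Your identification of $\rho_n$ with the approximation map of Lemma \ref{omegan} is the intended reading, and the details you add are all sound.
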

\begin{proof}
The statement follows by Lemma \ref{omegan} and Proposition \ref{restrizioni}.
\end{proof}

\subsubsection{Symmetric extensions}\label{sec:symext}

In this section we describe a one-parameter family of completely positive maps which are right-inverses of the restriction maps described in the previous section. The word symmetric refers to the fact that they commute with the symmetry group of the triangle. We shall be particularly interested to the cases $t=3/5$ and $t=1/2$.

For any $t\in [1/2,1)$ we define the positivity preserving map
\begin{equation}\label{t-ext}
\l^{(t)}:\bc(V_0)\to\bc(V_1), \quad
\l^{(t)}(\a^0_j)=\a^1_j+(1-t) \b^1_{j}+(1-t)\b^1_{j+1}+(2t-1)\b^1_{j-1} .
\end{equation}
Let now $s$ be a symmetry of the triangle, and consider its action both on the restriction to $V_0$ and to $V_1$. Then we get the adjoint maps $s_0^*:\bc(V_0)\to\bc(V_0)$ and $s_1^*:\bc(V_1)\to\bc(V_1)$. Indeed  $s_1^*\circ\lambda^t=\lambda^t\circ s_0^*$, namely the symmetric property mentioned above.
We then  define the completely positive unital contractions
$$
\l^{(t)}_n
=id_n\otimes \l^{(t)}: M_3(\bc)^{\otimes n}\otimes \bc(V_0)\subset \UHF(3^\infty) \longrightarrow M_3(\bc)^{\otimes n}\otimes \bc(V_1)\subset \UHF(3^\infty).
$$
For any $a\in M_3(\bc)^{\otimes n}\otimes \bc(V_0)\cong\bc(V_0, M_3(\bc)^{\otimes n})$, we set
\begin{equation}\label{osc}
\osc{a}=\max_{i\ne j}\|a(v_i)-a(v_j)\|,
\end{equation}
and observe that $\osc$ is also defined on $\bc(V_n)$ since $\bc(V_n)\subset  M_3(\bc)^{\otimes n}\otimes \bc(V_0)$.
\begin{prop}\label{ext1}
The following properties hold:
\begin{enumerate}
\item $\l^{(t)}_n(I)=I$.
\item If $f\in \bc(V_n)$, $\osc(f)=\max_{e\in E_n}|f(e^+)-f(e^-)|$.
\item If $a\in M_3(\bc)^{\otimes n}\otimes \bc(V_0)$, $\osc \big(\l^{t}_n(a)\big)\leq t\osc (a)$ and $\|a-\l^{(t)}_n(a)\|\leq t\osc(a)$.
\end{enumerate}
\end{prop}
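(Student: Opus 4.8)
For (1), since $\l^{(t)}_n=id_n\otimes\l^{(t)}$ and the identity factorises as $I_n\otimes I$, it suffices to verify $\l^{(t)}(I)=I$ in $\bc(V_1)$. Writing $I=\sum_j\a^0_j$ and summing \eqref{t-ext} over $j$, each $\a^1_j$ occurs once while the coefficient of every $\b^1_k$ is $(1-t)+(1-t)+(2t-1)=1$; hence $\l^{(t)}(I)=\sum_j\a^1_j+\sum_k\b^1_k=I$. For (2), I would read $f\in\bc(V_n)\subset M_3(\bc)^{\otimes n}\otimes\bc(V_0)$ as the matrix-valued function $v_i\mapsto f(v_i)$ on $V_0$. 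Since $f$ is scalar and diagonal, $\|f(v_i)-f(v_{i'})\|$ is the largest, over the diagonal entries, of the numbers $|f(\s,i)-f(\s,i')|$, where, through the labelling of Lemma \ref{indicesBis}, the entry indexed by $\s$ records the value of $f$ at the corner $i$ of the cell $\s$. For fixed $\s$, two distinct corners $(\s,i),(\s,i')$ of one cell either are identified, contributing $0$, or are the endpoints of a unique oriented edge of $E_n$, and every edge of $E_n$ arises this way; maximising over $i\neq i'$ and over the diagonal entries gives the identity.

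For (3) the starting point is that, arguing entrywise exactly as in (2), $\osc(\l^{(t)}_n(a))$ equals the maximum, over the oriented edges $e$ joining two corners of a common cell of the subdivided triangle, of $\|\l^{(t)}_n(a)(e^+)-\l^{(t)}_n(a)(e^-)\|$. Because $\l^{(t)}_n=id_n\otimes\l^{(t)}$, formula \eqref{t-ext} shows that the value of $\l^{(t)}_n(a)$ at each vertex of $V_1$ is an affine combination, with nonnegative coefficients summing to $1$, of the three corner values $a(v_1),a(v_2),a(v_3)\in M_3(\bc)^{\otimes n}$: it equals $a(v_j)$ at the corner $\a^1_j$ and $(1-t)a(v_k)+(1-t)a(v_{k-1})+(2t-1)a(v_{k+1})$ at a midpoint (indices mod $3$). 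Each edge-difference is thus a difference of two such combinations, hence a linear combination of the $a(v_i)$ with coefficients summing to $0$; regrouping it as $\sum(\text{nonnegative coefficient})\cdot(a(v_i)-a(v_{i'}))$ and applying the triangle inequality bounds its norm by $(\sum\text{coefficients})\,\osc(a)$. A direct inspection of the three edges of such a cell shows these coefficients sum to $t$ for the two edges through the original corner $\a^1_j$ and to $|3t-2|\le t$ for the edge between the two midpoints, so in every case the bound is $t\,\osc(a)$, giving the first inequality. The estimate $\|a-\l^{(t)}_n(a)\|\le t\,\osc(a)$ is obtained the same way: expanding $a-\l^{(t)}_n(a)$ over its diagonal (word) entries, each entry is again a combination of the $a(v_i)$ with vanishing total coefficient whose nonnegative regrouping sums to $t$.

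The routine parts are (1) and (2); the substance is the last estimate. The main obstacle is the bookkeeping forced by the non-standard numbering of Lemma \ref{indicesBis}: one must pin down which midpoint $\b^1_k$ occupies which corner of which subtriangle in order to write each edge-difference, and each word entry of $a-\l^{(t)}_n(a)$, correctly in terms of $a(v_1),a(v_2),a(v_3)$, and then check that the nonnegative coefficients of the regrouping never exceed $t$ in total. The borderline case is the edge between the two midpoints, where the coefficient is $3t-2$ and one needs $|3t-2|\le t$, an inequality valid precisely on the range $t\in[1/2,1)$, which is also what keeps all the other coefficients $(1-t)$ and $(2t-1)$ nonnegative.
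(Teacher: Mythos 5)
Your proof is correct and follows essentially the same route as the paper's: you write the values of $\l^{(t)}_n(a)$ on $V_1$ as convex combinations of $a(v_1),a(v_2),a(v_3)$ via \eqref{t-ext} and bound the corner-edge differences by a coefficient sum of $t$ and the midpoint--midpoint edge by $|2-3t|\le t$, exactly the paper's two computations (the paper only shortens the bookkeeping by invoking the symmetry of $\l^{(t)}_n$ to reduce to two representative edges). The sole cosmetic difference is in the second inequality, where the paper bounds each entry $a_i-b_j$ of $a-\l^{(t)}_n(a)$ by $\osc\big(\l^{(t)}_n(a)\big)$ and then reuses the first estimate, whereas you regroup coefficients directly to get $t\,\osc(a)$ --- the identical computation.
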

\begin{proof}
Properties (1) and (2) are obvious. 
We now prove the first inequality of property (3).
Since $\l^{(t)}_n(a)\in M_3(\bc)^{\otimes n}\otimes \bc(V_1)\cong \bc(V_1, M_3(\bc)^{\otimes n})$, we denote by  $a_i,b_i\in M_3(\bc),i=1,2,3$, the values of $\l^{(t)}_n(a)$ on $V_1$ as in fig. \ref{fig-iterosc} and exploit the relations \eqref{t-ext}:
\begin{figure}[ht]
\begin{tikzpicture}
\draw[gray, thick] (0,0) -- (2,0);
\draw[gray, thick] (2,0) -- (4,0);
\draw[gray, thick] (0,0) -- (1,1.73);
\draw[gray, thick] (1,1.73) -- (3,1.73);
\draw[gray, thick] (1,1.73) -- (2,3.46);
\draw[gray, thick] (2,0) -- (1,1.73);
\draw[gray, thick] (3,1.73) -- (2,3.46);
\draw[gray, thick] (2,0) -- (3,1.73);
\draw[gray, thick] (3,1.73) -- (4,0);
\filldraw[black] (0,0) circle (2pt) node[anchor=east] {$a_1$};
\filldraw[black] (2,3.46) circle (2pt) node[anchor=south] {$a_2$};
\filldraw[black] (4,0) circle (2pt) node[anchor=west] {$a_3$};
\filldraw[black] (3,1.73) circle (2pt) node[anchor=west] {$b_1$};
\filldraw[black] (2,0) circle (2pt) node[anchor=north] {$b_2$};
\filldraw[black] (1,1.73) circle (2pt) node[anchor=east] {$b_3$};
\filldraw[black] (5,2.5) circle (0pt) node[anchor=west] {$b_1=(1-t)a_2+(1-t)a_3+(2t-1)a_1$,}; 
\filldraw[black] (5,1.9) circle (0pt) node[anchor=west] {$b_2=(1-t)a_1+(1-t)a_3+(2t-1)a_2$,}; 
\filldraw[black] (5,1.3) circle (0pt) node[anchor=west] {$b_3=(1-t)a_1+(1-t)a_2+(2t-1)a_3$.}; 
\end{tikzpicture}
\caption{$\lambda_n^t(a)\in \bc(V_1, M_3(\bc)^{\otimes n})$}
 \label{fig-iterosc}
\end{figure}
\\
The inequality $\osc \big(\l^{t}_n(a)\big)\leq t\osc (a)$ means
$\| \l^{(t)}_n(a)(e^+)-\l^{(t)}_n(a)(e^-) \| \leq t\osc (a)$ for any edge $e\in E_1$. 
By  the symmetric invariance of $\l^{(t)}_n$, it is enough to check such inequality  for just two edges:
the edge whose boundary values are $a_1$ and $b_3$ and the edge whose boundary values are $b_1$ and $b_3$. Indeed
\begin{align*}
\|a_1-b_3\|&=\|(1-t)(a_1-a_2)+(2t-1)(a_1-a_3)\|\leq t\max(\|a_1-a_2\|,\|a_1-a_3\|)\leq t\osc(a),\\
\|b_3-b_1\|&=\|(1-t)(a_1-a_3)+(2t-1)(a_3-a_1)\|\leq t \|a_1-a_3\|\leq t\osc(a),
\end{align*}
and the inequality is proved.

As for the second, we identify $a$ with $a\otimes I$, which can be seen as an element of $\bc(V_0\sqcup V_0\sqcup V_0,M_3(\bc)^{\otimes m})$. Also, $\lambda_n^{(t)}(a)\in \bc(V_1)\otimes M_3(\bc)\subset\bc(V_0\sqcup V_0\sqcup V_0,M_3(\bc)^{\otimes m})$ so that their difference may be described as in fig. \ref{fig-difference}. A direct comparison of fig. \ref{fig-iterosc} and fig. \ref{fig-difference} shows that 
 $\|a-\l^{(t)}_n(a)\|\leq\osc(\l_n^t(a)) \leq t\osc(a)$.
\begin{figure}[ht]
\begin{tikzpicture}
\draw[gray, thick] (0,0) -- (2,0);
\draw[gray, thick] (0,0) -- (1,1.73);
\draw[gray, thick] (2,0) -- (1,1.73);
\draw[gray, thick] (3,0) -- (5,0);
\draw[gray, thick] (3,0) -- (4,1.73);
\draw[gray, thick] (4,1.73) -- (5,0);
\draw[gray, thick] (1.5,2.6) -- (3.5,2.6);
\draw[gray, thick] (2.5,4.33) -- (1.5,2.6);
\draw[gray, thick] (2.5,4.33) -- (3.5,2.6);
\filldraw[black] (0,0) circle (2pt) node[anchor=east] {$0$};
\filldraw[black] (2.5,4.33) circle (2pt) node[anchor=south] {$0$};
\filldraw[black] (5,0) circle (2pt) node[anchor=west] {$0$};
\filldraw[black] (4,1.73) circle (2pt) node[anchor=west] {$a_3$-$b_1$};
\filldraw[black] (3.5,2.6) circle (2pt) node[anchor=west] {$a_2$-$b_1$};
\filldraw[black] (1.5,2.6) circle (2pt) node[anchor=east] {$a_2$-$b_3$};
\filldraw[black] (2,0) circle (2pt) node[anchor=north] {$a_1$-$b_2$};
\filldraw[black] (3,0) circle (2pt) node[anchor=north] {$a_3$-$b_2$};
\filldraw[black] (1,1.73) circle (2pt) node[anchor=east] {$a_1$-$b_3$};
\end{tikzpicture}
\caption{$a-\lambda_n^t(a)\in\bc(V_0\sqcup V_0\sqcup V_0,M_3(\bc)^{\otimes m})$}
 \label{fig-difference}
\end{figure}
\end{proof}

In the following, we drop the superscript $t$ if not needed.
\begin{lem}\label{itenspi}
For all $n\geq 0$, $\l_n$ sends $\ca_n$ in $\ca_{n+1}$.
\end{lem}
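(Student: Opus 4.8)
The plan is to check the inclusion on the linear generators of $\ca_n$ provided by Lemma \ref{indicesBis}, namely $\a^n_j$ and $x\otimes\b^k_j$ with $j=1,2,3$, $k=1,\dots,n$, $x\in M_3(\bc)^{\otimes(n-k)}$, and then conclude by linearity. The key remark is that $\l_n=id_n\otimes\l^{(t)}$ acts only on the rightmost tensor factor, i.e. on the copy of $\bc(V_0)=\bc^3$ sitting in the last slot of $M_3(\bc)^{\otimes n}\otimes\bc(V_0)$. Thus for each generator it suffices to isolate its last factor (one of $\a^0_1,\a^0_2,\a^0_3$), replace it by the corresponding value $\l^{(t)}(\a^0_\bullet)\in\bc(V_1)$ given by \eqref{t-ext}, and recognise the result as a combination of generators of $\ca_{n+1}\subset M_3(\bc)^{\otimes(n+2)}$.

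I would first dispose of the generators whose last factor is $\a^0_2$: these are $\a^n_j=(e_j\otimes e_2^{\otimes(n-1)})\otimes\a^0_2$ and, since $\b^k_j=\b^{k-1}_j\otimes\a^0_2$ for $k\geq2$, the elements $x\otimes\b^k_j=(x\otimes\b^{k-1}_j)\otimes\a^0_2$ with $k\geq2$. Substituting $\l^{(t)}(\a^0_2)=\a^1_2+(1-t)\b^1_2+(1-t)\b^1_3+(2t-1)\b^1_1$, the $\a^1_2$ summand raises the level, producing $\a^{n+1}_j$, respectively $x\otimes\b^{k+1}_j$ (using $\b^{k-1}_j\otimes\a^1_2=\b^{k+1}_j$), while each $\b^1_i$ summand yields a term $z\otimes\b^1_i$ with $z\in M_3(\bc)^{\otimes n}$. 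All of these are generators of $\ca_{n+1}$, so these cases are immediate.

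The delicate case is $k=1$. Here $\b^1_j=e_j\otimes\a^0_1+e_{j+2}\otimes\a^0_3$, so $x\otimes\b^1_j=(x\otimes e_j)\otimes\a^0_1+(x\otimes e_{j+2})\otimes\a^0_3$ and
\[
\l_n(x\otimes\b^1_j)=(x\otimes e_j)\otimes\l^{(t)}(\a^0_1)+(x\otimes e_{j+2})\otimes\l^{(t)}(\a^0_3).
\]
The point — and the one step that is not automatic — is that the two level-raising terms $(x\otimes e_j)\otimes\a^1_1$ and $(x\otimes e_{j+2})\otimes\a^1_3$ are not separately of generator form, yet, because the coefficient of $\a^1_1$ in $\l^{(t)}(\a^0_1)$ and of $\a^1_3$ in $\l^{(t)}(\a^0_3)$ both equal $1$, they recombine as
\[
(x\otimes e_j)\otimes\a^1_1+(x\otimes e_{j+2})\otimes\a^1_3
=x\otimes\big((e_j\otimes\a^0_1+e_{j+2}\otimes\a^0_3)\otimes\a^0_2\big)=x\otimes\b^2_j,
\]
a genuine generator of $\ca_{n+1}$ (with $x\in M_3(\bc)^{\otimes(n-1)}$). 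The remaining summands are again of the form $z\otimes\b^1_i$ with $z=x\otimes e_\bullet\in M_3(\bc)^{\otimes n}$, hence generators.

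Collecting the three cases and invoking linearity gives $\l_n(\ca_n)\subseteq\ca_{n+1}$. Apart from the recombination above, the argument is bookkeeping of tensor slots, and it is precisely at that recombination that the specific shape of $\l^{(t)}$ — the matching unit coefficients dictated by the self-similar labelling of the vertices — is used; I expect this to be the only genuinely load-bearing point of the proof.
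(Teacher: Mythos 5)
Your proof is correct and follows exactly the route the paper intends: its proof of Lemma \ref{itenspi} is the one-line instruction ``check the property on each generator of $\ca_n$, as described in Lemma \ref{indicesBis}'', and you have carried out precisely that check, correctly identifying the only nontrivial point --- the recombination $(x\otimes e_j)\otimes\a^1_1+(x\otimes e_{j+2})\otimes\a^1_3=x\otimes\b^2_j$ in the $k=1$ case, which works because both unit coefficients in \eqref{t-ext} match. Your write-up is in fact more detailed than the paper's, but it is the same argument.
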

\begin{proof} Check the property on each generator of $\ca_n$, as described in Lemma \ref{indicesBis}.
\end{proof}
We also define
$\l_{[n,n+k]}$ as the iterate $\l_{[n,n+k]}= \l_{n+k-1}\circ \cdots \circ \l_n$ from $M_3(\bc)^{\otimes n}\otimes \bc^3$ into $M_3(\bc)^{\otimes n}\otimes\bc(V_k)\subset M_3(\bc)^{\otimes n}\otimes( \bc^3)^{\otimes (k+1)}$. This is a completely positive contraction which sends $\ca_n$ into $\ca_{n+k}$.
\begin{prop}[Symmetric extension from $\ca_n$ into $\ca_{\infty}$]\label{EA2}
\item[$(1)$] For all  $n\in \bn$ and  $x\in M_3(\bc)^{\otimes n}\otimes \bc^3$, the sequence $\l_{[n,n+k]}(x)$ has a uniform limit in the UHF algebra as $k\to \infty$. This limit will be denoted $\l_{[n,\infty)}(x)$.
\item[$(2)$] $\l_{[n,\infty)}$ is a completely positive contraction which sends $\ca_n$ into $\ca_{\infty}$.
\item[$(3)$] For $b\in \ca_n$, $\l_{[n,\infty)}(b)$ is an extension of $b$ in the sense that $\rho_n\big(\l_{[n,\infty)}(b)\big)=b$.
\end{prop}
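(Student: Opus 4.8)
The plan is to establish (1) first---the existence of the limit is the only substantial point---and then read off (2) and (3) by continuity and telescoping. For (1), write $y_k:=\l_{[n,n+k]}(x)$, so that $y_0=x$ and $y_{k+1}=\l_{n+k}(y_k)$, and regard $y_k$ as an element of $M_3(\bc)^{\otimes(n+k)}\otimes\bc(V_0)$ via the inclusion $\bc(V_k)\subset(\bc^3)^{\otimes k}\otimes\bc(V_0)$, the last factor being the finest one. Applying Proposition \ref{ext1}(3) with its index equal to $n+k$ and its argument equal to $y_k$ yields at once
\begin{equation*}
\osc(y_{k+1})\le t\,\osc(y_k)\qquad\text{and}\qquad \|y_{k+1}-y_k\|\le t\,\osc(y_k).
\end{equation*}
Since $\l_{n+k}=id_{n+k}\otimes\l^{(t)}$ leaves the first $n+k$ tensor legs untouched, it acts separately inside each cell of level $k$, so these inequalities hold cellwise and then globally by taking the supremum over cells encoded in the $M_3(\bc)^{\otimes(n+k)}$-norm. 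Iterating gives $\osc(y_k)\le t^k\osc(x)$ and hence $\|y_{k+1}-y_k\|\le t^{k+1}\osc(x)$; as $t\in[1/2,1)$ the right-hand side is summable, so $(y_k)$ is Cauchy in the complete algebra $\UHF(3^\infty)$ and converges to an element which we call $\l_{[n,\infty)}(x)$.

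The point requiring care, and the main obstacle, is the bookkeeping of oscillation between consecutive levels. The quantity bounded on the left of Proposition \ref{ext1}(3) is the $E_1$-edge oscillation of $y_{k+1}\in M_3(\bc)^{\otimes(n+k)}\otimes\bc(V_1)$ (Proposition \ref{ext1}(2)), whereas to iterate one needs the $V_0$-oscillation of $y_{k+1}$ read off its \emph{new} finest factor, i.e.\ of $y_{k+1}\in M_3(\bc)^{\otimes(n+k+1)}\otimes\bc(V_0)$. These two numbers coincide because of the self-similar combinatorics: every internal edge of a level-one subcell is precisely a level-one edge, so the maximum of the three corner differences inside each subcell equals the maximum over $E_1$. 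Making this identification explicit---reading the values $a_i,b_i$ off Figure \ref{fig-iterosc}---is what legitimises the clean recursion $\osc(y_{k+1})\le t\,\osc(y_k)$ used above.

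For (2), each $\l_{[n,n+k]}$ is a composition of the completely positive unital contractions $\l_{n+j}$, hence is itself completely positive and contractive; as $\l_{[n,\infty)}$ is its point-norm limit by (1), complete positivity passes to the limit (test with $id_m\otimes(\cdot)$ and use that a point-norm limit of positive maps is positive), while $\|\l_{[n,\infty)}(x)\|=\lim_k\|\l_{[n,n+k]}(x)\|\le\|x\|$ gives contractivity. For $b\in\ca_n$ one has $\l_{[n,n+k]}(b)\in\ca_{n+k}$ by Lemma \ref{itenspi} and the definition of $\l_{[n,n+k]}$, so setting $a_m:=\l_{[n,m]}(b)\in\ca_m$ for $m\ge n$ (and $a_m:=0$ otherwise) exhibits $\l_{[n,\infty)}(b)$ as a norm-convergent section, whence $\l_{[n,\infty)}(b)\in\ca_\infty$ by Theorem \ref{AinfAlgebra}.

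Finally, for (3) I would first prove the one-step left-inverse identity $\rho_m\circ\l_m=id_{\ca_m}$, which on the last tensor leg reduces to $(id\otimes e_{22}^*)\circ\l^{(t)}=id$ on $\bc(V_0)$. This is immediate on generators: the restriction sends $\a^1_j\mapsto\a^0_j$ and $\b^1_j\mapsto0$, while by \eqref{t-ext} $\l^{(t)}(\a^0_j)=\a^1_j+(1-t)\b^1_j+(1-t)\b^1_{j+1}+(2t-1)\b^1_{j-1}$, so the composite returns $\a^0_j$. Using Proposition \ref{restrizioni} to identify $\rho_n$ on $\ca_{n+k}$ with $\rho_n\circ\cdots\circ\rho_{n+k-1}$, the $k$ one-step identities telescope to $\rho_n\big(\l_{[n,n+k]}(b)\big)=b$ for every $k$. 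Since $\rho_n$ is a contraction, hence continuous, letting $k\to\infty$ gives $\rho_n\big(\l_{[n,\infty)}(b)\big)=b$, as required.
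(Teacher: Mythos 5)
Your proof is correct and follows essentially the same route as the paper's: part (1) is the Cauchy estimate obtained from the geometric oscillation decay of Proposition \ref{ext1}(3), part (2) combines Lemma \ref{itenspi} with the definition of $\ca_\infty$ as norm-limits of sections, and part (3) is exactly the one-step identity $\rho_m\circ\l_m=\mathrm{id}$ (checked on the generators via \eqref{t-ext}) telescoped and passed to the limit by contractivity of $\rho_n$. The only cosmetic difference is that the paper reduces (1) to the scalar extensions $\l_{[0,k]}(\a^0_i)$ by writing $x=\sum_i x^i\otimes\a^0_i$ and letting the extension act only on the $\bc(V_0)$ leg, whereas you iterate the oscillation bound directly on the matrix-valued element, making explicit the identification of the $E_1$-edge oscillation with the finest-leg $V_0$-oscillation --- a point the paper leaves implicit in Proposition \ref{ext1}(2).
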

In other terms, every element of an approximating algebra extends naturally to an element of the noncommutative Gasket with the same norm.

\begin{proof}
$(1)$ Let us write $x=x^1\otimes \a^0_1+x^2\otimes \a^0_2 + x^3\otimes  \a^0_3$ the generic element of $M_3(\bc)^{\otimes n}\otimes \bc^3=M_3(\bc)^{\otimes n}\otimes \bc(V_0)$.
\\
By construction
$$\l_{[n,n+k]}(x)=x^1\otimes \l_{[0,k]}(\a^0_1)+x^2\otimes \l_{[0,k]}(\a^0_2)+x^3\otimes \l_{[0,k]}(\a^0_3),$$
where  $\l_{[0,k]}(\a^0_i)$ is the symmetric extension of the function $\a^0_i$ from $\bc(V_0)$ to $\bc(V_{k})$. 
By Proposition \ref{ext1} (3),
$$
\|\l_{[n,n+j]}(\a^0_i)-\l_{[n,n+j+1]}(\a^0_i)\|\leq t \osc(\l_{[n,n+j]}(\a^0_i))\leq t^{j+1},
$$
hence the sequence $\l_{[n,n+k]}(x)$ is a Cauchy sequence.
\\
$(2)$ By Lemma \ref{itenspi}, $\lim_n \l_{[n,n+k]}(x)\in\ca_\infty$ whenever $x$ belongs to $\ca_n$.
\\
$(3)$ Just notice that $\rho_0\circ \l_{[0,\infty)}(\a^0_i)=\a^0_i$.
\end{proof}

\begin{cor}
 Each morphism $\rho_m\,:\,\ca_{\infty}\to \ca_m$ is onto.

\end{cor}

Next Proposition shows that each element of $\ca_\infty$ appears in many ways as a limit of elements obtained through a symmetric extension process.

\begin{prop}\label{EA3} \-
Let $b\in \ca_{\infty}$.

\item[$(1)$] For any approximating sequence, $b_n\in \ca_n$, $\lim b_n=b$, we have
$$b=\lim_{n\to \infty} \l_{[n,\infty)}(b_n)\,.$$

\item[$(2)$] In particular\,:
$$b=\lim_{n\to \infty} \l_{[n,\infty)}(\rho_n(b))\,.$$

\end{prop}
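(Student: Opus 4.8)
The plan is to deduce (2) from (1) and to prove (1) by combining two ingredients: a quantitative control of the distance between $b_n$ and its symmetric extension $\l_{[n,\infty)}(b_n)$ in terms of the oscillation $\osc(b_n)$, and the vanishing of this oscillation along \emph{any} approximating sequence of $b$. Granting these, for $b_n\in\ca_n$ with $b_n\to b$ one estimates
\[
\|b-\l_{[n,\infty)}(b_n)\|\le\|b-b_n\|+\|b_n-\l_{[n,\infty)}(b_n)\|,
\]
and both terms tend to $0$. Part (2) is then the special case $b_n=\rho_n(b)$: indeed $\rho_n(b)\in\ca_n$ by Corollary \ref{Cstella}, and $\rho_n(b)\to b$ by Lemma \ref{omegan} applied with $\omega=e_{22}^*$, so $\{\rho_n(b)\}$ is a legitimate approximating sequence and the ``in particular'' is immediate.

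For the first ingredient I would telescope along the defining sequence $\l_{[n,n+k]}(b_n)$. Since $\l_{[n,n]}(b_n)=b_n$ and $\l_{[n,n+j+1]}=\l_{n+j}\circ\l_{[n,n+j]}$, writing $y_j=\l_{[n,n+j]}(b_n)$ gives $b_n-\l_{[n,\infty)}(b_n)=\sum_{j\ge0}\bigl(y_j-\l_{n+j}(y_j)\bigr)$, whence by the second inequality of Proposition \ref{ext1}(3)
\[
\|b_n-\l_{[n,\infty)}(b_n)\|\le\sum_{j\ge0}t\,\osc(y_j).
\]
Iterating the first inequality of the same Proposition yields $\osc(y_j)\le t^{j}\osc(b_n)$, exactly as in the proof of Proposition \ref{EA2}(1) for the generators $\a^0_i$, so that $\|b_n-\l_{[n,\infty)}(b_n)\|\le\frac{t}{1-t}\,\osc(b_n)$. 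The constant is finite since $t<1$, hence harmless once the oscillation is shown to vanish.

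The main obstacle, and the only genuinely new point, is to show that $\osc(b_n)\to0$ for an \emph{arbitrary} approximating sequence $b_n\to b$. Writing $b_n=\sum_i b_n^i\otimes\a^0_i$ with $b_n^i\in M_3(\bc)^{\otimes n}$, by \eqref{osc} we have $\osc(b_n)=\max_{i\ne j}\|b_n^i-b_n^j\|$, and each vertex value is recovered as $b_n^i=\Phi_i^{(n)}(b_n)$, where $\Phi_i^{(n)}:=id_n\otimes e_{ii}^*\otimes(e_{22}^*)^{\otimes[n+2,\infty)}$ is the completely positive contraction on $\UHF(3^\infty)$ evaluating the $(n+1)$-th factor at the $i$-th diagonal entry and applying the tail state afterwards (here $e_{ii}^*\colon m\mapsto m_{ii}$, so $\Phi_i^{(n)}=(id_n\otimes e_{ii}^*)\circ\rho_n$ and $\rho_n(b_n)=b_n$). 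The key is that $\Phi_i^{(n)}$ kills the oscillation of short cylindrical elements: given $\eps>0$, choose a cylindrical $b'=a_k\otimes I_{[k+1,\infty)}$ with $\|b-b'\|<\eps$ (dense, as in the proof of Lemma \ref{omegan}) and $N\ge k$ with $\|b_n-b\|<\eps$ for $n\ge N$. For $n\ge N$ one has $n+1>k$, so $\Phi_i^{(n)}(b')=a_k\otimes I_{[k+1,n]}$ independently of $i$, giving $(\Phi_i^{(n)}-\Phi_j^{(n)})(b')=0$; since $\Phi_i^{(n)}-\Phi_j^{(n)}$ is a difference of completely positive contractions it has norm at most $2$, whence
\[
\|b_n^i-b_n^j\|=\|(\Phi_i^{(n)}-\Phi_j^{(n)})(b_n-b')\|\le2\|b_n-b'\|<4\eps .
\]
Thus $\osc(b_n)\le4\eps$ for $n\ge N$, so $\osc(b_n)\to0$; combined with the estimate of the previous paragraph this proves (1), and (2) follows by taking $b_n=\rho_n(b)$ as above.
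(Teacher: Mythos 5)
Your proof is correct, but it takes a genuinely different route from the paper's. The paper's argument for $(1)$ is a two-line trick exploiting unitality: identifying $b_n$ with $b_n\otimes I_3\in M_3(\bc)^{\otimes (n+1)}\otimes\bc^3$ and using $\l^{(t)}(I)=I$, one gets $\l_{[n+1,\infty)}(b_n)=b_n$ exactly, so the contraction property alone gives $\|\l_{[n+1,\infty)}(b_{n+1})-b_n\|\leq\|b_{n+1}-b_n\|\to0$, and since $b_n\to b$ the conclusion follows; no oscillation estimate for general sequences is needed (for $(2)$ the paper, like you, just feeds $b_n=\rho_n(b)$ into $(1)$, with $\rho_n(b)\to b$ coming from Lemma \ref{omegan}). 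You instead prove two genuinely new quantitative facts: the bound $\|b_n-\l_{[n,\infty)}(b_n)\|\leq\frac{t}{1-t}\,\osc(b_n)$, obtained by telescoping exactly as the paper does for the generators in Proposition \ref{EA2}(1), and --- the real novelty --- the lemma that $\osc(b_n)\to0$ along \emph{any} norm-convergent section sequence, proved via the slicing contractions $\Phi_i^{(n)}=(id_n\otimes e_{ii}^*)\circ\rho_n$ and cylindrical approximation; both steps are sound (each $\Phi_i^{(n)}$ is a composition of unital completely positive slices, hence a contraction, and $\Phi_i^{(n)}$ is indeed independent of $i$ on cylinders of depth $k\leq n$, so the $4\eps$ estimate holds). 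What each approach buys: the paper's is shorter and avoids oscillation altogether, since closeness of consecutive terms substitutes for smallness of oscillation; yours yields information of independent interest, namely an explicit modulus controlling the distance from an element of $\ca_n$ to its symmetric extension, and the ``vanishing oscillation'' property of convergent sections, which is the noncommutative analogue of the uniform-continuity argument in Theorem \ref{Kinfinity} and makes transparent \emph{why} every approximating sequence is asymptotically fixed by the extension maps, rather than just verifying that it is.
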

\begin{proof}
$(1)$ Let us identify $b_n$ with $b_n\otimes I_3\in M_3(\bc)^{\otimes n}\otimes \bc^3\otimes \bc^3$, in order to get first $\l_{n+1}(b_n)=b_n\otimes I_3\otimes I_3$, then $\l_{[n+1,\infty)}(b_n)=b_n\in \UHF(3^\infty)$. As $\l_{[n+1,\infty)}$ is a contraction, we obtain
$$
\|\l_{[n+1,\infty)}(b_{n+1})-b_n\|\leq \|b_{n+1}-b_n\|\to 0\,, \quad n\to \infty,
$$
from which we deduce $\lim_n \l_{[n+1,\infty)}(b_{n+1})=\lim_n b_n=b$.

\item[$(2)$] is a consequence of $(1)$ above and Proposition \ref{restrizioni}.
\end{proof}

\begin{rem} Summarizing, we have selected a dense subspace of the noncommutative Gasket generated by explicit elements
$$
\l_{[n,\infty)}(\a^n_j),\ x\otimes \l_{[n,\infty)}(\b^{k}_j):j=1,2,3, k=1,\dots n, n\in\bn,x\in M_3(\bc)^{\otimes (n-k)}.
$$
\end{rem}

\begin{cor} \label{nuc}
The $C^*$-algebra $\ca_{\infty}$ is nuclear.
\end{cor}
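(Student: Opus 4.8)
The plan is to deduce nuclearity from the completely positive approximation property (CPAP): by the theorem of Choi--Effros and Kirchberg, a $C^*$-algebra $A$ is nuclear if and only if there are completely positive contractions $\phi_\alpha\colon A\to B_\alpha$ and $\psi_\alpha\colon B_\alpha\to A$, with each $B_\alpha$ finite-dimensional, such that $\psi_\alpha\circ\phi_\alpha\to \mathrm{id}_A$ in the point-norm topology. The whole point is that every ingredient of such a factorization through finite-dimensional algebras has already been produced in the preceding subsections, so the present step is just a matter of assembling them.

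Concretely, I would take the finite-dimensional algebras $B_n=\ca_n$ (finite-dimensional by Definition \ref{IndDef}), and set $\phi_n=\rho_n\colon\ca_\infty\to\ca_n$ and $\psi_n=\l_{[n,\infty)}\colon\ca_n\to\ca_\infty$. The map $\rho_n$ is a surjective $*$-homomorphism by Proposition \ref{restrizioni}, hence automatically a completely positive contraction; the map $\l_{[n,\infty)}$ is a completely positive contraction landing in $\ca_\infty$ by Proposition \ref{EA2}(2). Consequently each composite $\Phi_n:=\l_{[n,\infty)}\circ\rho_n$ is a completely positive contraction on $\ca_\infty$ that factors through the finite-dimensional algebra $\ca_n$.

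It then remains only to check the required convergence $\Phi_n\to\mathrm{id}$, and this is exactly the content of Proposition \ref{EA3}(2), which asserts $b=\lim_n \l_{[n,\infty)}(\rho_n(b))=\lim_n\Phi_n(b)$ for every $b\in\ca_\infty$. This yields point-norm convergence, which immediately upgrades to uniform convergence on any finite subset $F\subset\ca_\infty$ (choose $n$ so large that $\|\Phi_n(b)-b\|<\eps$ for the finitely many $b\in F$). Thus $\ca_\infty$ has the CPAP and is therefore nuclear.

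The argument carries no real obstacle once $\rho_n$ and $\l_{[n,\infty)}$ are in hand, so the genuine work lies upstream, in the construction of the symmetric extensions and the verification that they are completely positive contractions with range in $\ca_\infty$ (Proposition \ref{EA2}). The only mild subtlety here is that Proposition \ref{EA3}(2) furnishes convergence \emph{pointwise}, whereas the cleanest formulation of the CPAP demands approximation uniformly over finite sets; since only finite sets are involved, this passage is immediate and requires nothing beyond the uniform contractivity of the $\Phi_n$. I would note that sitting inside the nuclear algebra $\UHF(3^\infty)$ is by itself \emph{not} enough to conclude nuclearity (subalgebras of nuclear algebras need not be nuclear), so the explicit finite-dimensional factorization provided by $\rho_n$ and $\l_{[n,\infty)}$ is essential.
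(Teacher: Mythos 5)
Your proposal is correct and is exactly the paper's argument: the paper's proof consists precisely of the observation that the maps $b\mapsto \l_{[n,\infty)}\circ\rho_n(b)$ form an approximation of the identity by finite-rank completely positive contractions, i.e.\ the factorization through the finite-dimensional algebras $\ca_n$ via Propositions \ref{restrizioni}, \ref{EA2} and \ref{EA3}(2) that you assemble. Your write-up merely makes explicit the standard CPAP criterion and the (correct) remark that nuclearity does not pass to subalgebras of $\UHF(3^\infty)$, which the paper leaves implicit.
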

\begin{proof}
The sequence of linear maps $b\to \l_{[n,\infty)}\circ{\rho_n(b)}$ provide an explicit approximation of the identity by completely positive contractions with finite rank.
\end{proof}

\section{Representations of $\ca_\infty$}
\subsection{The ideal $\ca_{\infty}^0$ of the elements in $\ca_{\infty}$ vanishing on $V_0$.}

\begin{dfn}
Denote by $\ca_{\infty}^0$ the kernel of the homomorphim $\rho_0\,:\,\ca_{\infty}\to \bc(V_0)$, and by  $\ca_{n}^0$ the kernel of the homomorphim $\rho_0|_{\ca_n}\,:\,\ca_n\to \bc(V_0)$.
%
%
\end{dfn}

All these are ideals in their respective $C^*$-algebras.

\begin{rem}
One checks easily that the kernel of the homomorphim $\rho_0|_{\bc(V_n)}\,:\, \bc(V_n) \to \bc(V_0)$  is the ideal of functions on $V_n$ vanishing on $V_0$, while the kernel of the homomorphim $\rho_0|_{C(K)}\,:\, C(K) \to \bc(V_0)$ is the space of continuous functions on the Gasket vanishing on $V_0$.
\end{rem}


The following Lemma is easy, but useful\;:

\begin{lem}\label{rho-m-maps}\-

\item[$(1)$] The restriction map $\rho_n$ sends $\ca_{n+1}^0$ in $\ca_{n}^0$.

\item[$(2)$] By iteration and passing to the limit, $\rho_n$ sends $\ca_{n+k}^0$ in $\ca_{n}^0$ and $\ca_{\infty}^0$ in $\ca_{n}^0$.

\end{lem}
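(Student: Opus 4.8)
The plan is to reduce both assertions to a single \emph{consistency} (cocycle) identity for the restriction maps, namely $\rho_0\circ\rho_n=\rho_0$, interpreted as an equality of maps on $\ca_{n+1}$, on each $\ca_{n+k}$, and on $\ca_\infty$. This identity is really the only point with content, and it is immediate from the explicit description $\rho_m=id_{m+1}\otimes (e^*_{22})^{\otimes[m+2,\infty)}I_{[m+2,\infty)}$ of Proposition \ref{restrizioni}. Indeed, applying $\rho_n$ first evaluates the state $e^*_{22}$ factorwise on the tensor legs indexed by $[n+2,\infty)$ (replacing them by identities), and then $\rho_0$ evaluates $e^*_{22}$ on the legs indexed by $[2,n+1]$ (with $e^*_{22}(I)=1$ on the legs already collapsed); since $e^*_{22}$ is applied leg by leg, the combined effect is to apply $e^*_{22}$ to all legs in $[2,\infty)$, which is exactly $\rho_0$. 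Equivalently, this is the case $n'=0$ of the relation $\rho_{n'}=\rho_{n'}\circ\rho_{n'+1}\circ\cdots\circ\rho_{n'+k-1}$ already recorded in the proof of Proposition \ref{restrizioni}.

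With this in hand, part $(1)$ is a one-line diagram chase. Let $a\in\ca_{n+1}^0$, so that $a\in\ca_{n+1}$ and $\rho_0(a)=0$. By Proposition \ref{restrizioni} (or Lemma \ref{restrizione}) we already know that $\rho_n(a)\in\ca_n$, so it only remains to verify that $\rho_n(a)$ vanishes on $V_0$, i.e. that $\rho_0\big(\rho_n(a)\big)=0$. But the consistency identity gives $\rho_0\big(\rho_n(a)\big)=\rho_0(a)=0$, whence $\rho_n(a)\in\ca_n^0$.

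For part $(2)$ the same two ingredients suffice, with no additional work. For finite $k$ one may either iterate $(1)$ along $\ca_{n+k}^0\to\ca_{n+k-1}^0\to\cdots\to\ca_n^0$ using $\rho_n=\rho_n\circ\rho_{n+1}\circ\cdots\circ\rho_{n+k-1}$, or argue directly: for $a\in\ca_{n+k}^0$ one has $\rho_n(a)\in\ca_n$ by Proposition \ref{restrizioni} and $\rho_0\big(\rho_n(a)\big)=\rho_0(a)=0$ by consistency, so $\rho_n(a)\in\ca_n^0$. The limit case $k=\infty$ is identical: for $a\in\ca_\infty^0$ we have $\rho_n(a)\in\ca_n$ (Proposition \ref{restrizioni} and Corollary \ref{Cstella}) and $\rho_0\big(\rho_n(a)\big)=\rho_0(a)=0$, hence $\rho_n(a)\in\ca_n^0$.

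The only step needing genuine (if brief) justification is the consistency identity $\rho_0\circ\rho_n=\rho_0$; everything else is formal. There is in fact no real obstacle here, which is why the lemma is flagged as ``easy'': the maps $\rho_n$ are honest $C^*$-morphisms, and the target containments $\rho_n(\ca_{n+k})\subseteq\ca_n$ and $\rho_n(\ca_\infty)\subseteq\ca_n$ were already established. Thus the lemma asserts only that these morphisms additionally preserve the vanishing-at-$V_0$ condition, which is forced by the fact that the level-$0$ restriction factors through every intermediate level.
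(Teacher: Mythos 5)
Your proof is correct, and since the paper itself offers no proof of this lemma (it is flagged as ``easy'', the phrase ``by iteration and passing to the limit'' in part $(2)$ indicating the intended route), your argument is essentially the paper's implicit one, tidily packaged as the single identity $\rho_0\circ\rho_n=\rho_0$ on $\UHF(3^\infty)$, which indeed holds legwise from the formula $\rho_m=id_{m+1}\otimes (e^*_{22})^{\otimes[m+2,\infty)}I_{[m+2,\infty)}$ because $e_{22}^*(I)=1$ (equivalently, from the factorization $\rho_0=\rho_0\circ\rho_1\circ\cdots\circ\rho_{n+k-1}$ recorded in the proof of Proposition \ref{restrizioni}). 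Since $\ca_m^0$ is by definition $\ker(\rho_0|_{\ca_m})$ and the containments $\rho_n(\ca_{n+k})\subseteq\ca_n$, $\rho_n(\ca_\infty)\subseteq\ca_n$ are already established, nothing further is required.
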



\begin{prop}\label{kerrhom}\-

\item[$(1)$] For all $m,p \in \bn$, $M_3(\bc)^{\otimes m} \otimes \ca_p^0 \subset \ca_{m+p}^0$.

\item[$(2)$] The kernel of $\rho_m\,:\;\ca_{m+p}\to \ca_m$ is $M_3(\bc)^{\otimes m}\otimes \ca_p^0$.

\item[$(3)$] For all $m\in\bn$, $M_3(\bc)^{\otimes m}\otimes \ca_\infty^0 \subset \ca_\infty^0$.

\item[$(4)$] The kernel of $\rho_m\,:\; \ca_{\infty}\to \ca_m$ is $M_3(\bc)^{\otimes m}\otimes \ca_\infty^0$.

\end{prop}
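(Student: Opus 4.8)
My plan is to base everything on two structural inputs: the explicit linear generators of $\ca_n$ and of its ideal $\ca_n^0$ from Lemma~\ref{indicesBis}, and a factorization of the restriction maps. For the latter, iterating the inclusions $\psi^*_{n+1}:\ca_{n+1}\hookrightarrow M_3(\bc)\otimes\ca_n$ gives $\ca_{m+p}\subseteq M_3(\bc)^{\otimes m}\otimes\ca_p$, and under this splitting (as well as under $\UHF(3^\infty)=M_3(\bc)^{\otimes m}\otimes\UHF(3^\infty)$) the definition $\rho_n=id_{n+1}\otimes(e^*_{22})^{\otimes[n+2,\infty)}$ immediately yields $\rho_n=id_m\otimes\rho_{n-m}$ for $n\ge m$, since the first $m$ tensor legs are left untouched; in particular $\rho_m=id_m\otimes\rho_0$. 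I would record this identity at the outset and reuse it. For $(1)$ I would then argue on generators: iterating Lemma~\ref{restrizione}, $\rho_0$ sends $\a^p_j$ to $\a^0_j$ and kills every $x\otimes\b^k_j$ with $1\le k\le p$, so by the uniqueness of the decomposition in Corollary~\ref{multiplicativity} one gets $\ca_p^0=\langle x\otimes\b^k_j:j=1,2,3,\ k=1,\dots,p,\ x\in M_3(\bc)^{\otimes(p-k)}\rangle$. Tensoring on the left by $M_3(\bc)^{\otimes m}$ merely enlarges the free leg, giving $M_3(\bc)^{\otimes m}\otimes\ca_p^0=\langle z\otimes\b^k_j:k=1,\dots,p,\ z\in M_3(\bc)^{\otimes(m+p-k)}\rangle$, which is a subset of the generating set $\langle z\otimes\b^k_j:k=1,\dots,m+p\rangle$ of $\ca_{m+p}^0$; this proves $(1)$.

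For $(2)$ I would combine the factorization with slice maps. Fixing a matrix-unit basis $\{E_\alpha\}$ of $M_3(\bc)^{\otimes m}$, I write $b\in\ca_{m+p}\subseteq M_3(\bc)^{\otimes m}\otimes\ca_p$ uniquely as $b=\sum_\alpha E_\alpha\otimes b_\alpha$, where each slice satisfies $b_\alpha\in\ca_p$ precisely because $\ca_{m+p}$ sits inside $M_3(\bc)^{\otimes m}\otimes\ca_p$. Then $\rho_m(b)=\sum_\alpha E_\alpha\otimes\rho_0(b_\alpha)$, so $\rho_m(b)=0$ iff every $b_\alpha\in\ker(\rho_0|_{\ca_p})=\ca_p^0$, i.e. iff $b\in M_3(\bc)^{\otimes m}\otimes\ca_p^0$. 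Part $(1)$ guarantees that this last space already lies in $\ca_{m+p}$, so the kernel computed inside $\ca_{m+p}$ coincides with it, giving $(2)$.

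For $(3)$ I would pass to the limit using $(1)$. Given $a\in\ca_\infty^0$, Lemma~\ref{omegan} gives $a=\lim_k\rho_k(a)$ and Lemma~\ref{rho-m-maps} gives $\rho_k(a)\in\ca_k^0$, so $a$ is a norm limit of elements of $\ca_k^0$. For $y\in M_3(\bc)^{\otimes m}$ this yields $y\otimes a=\lim_k y\otimes\rho_k(a)$ with $y\otimes\rho_k(a)\in M_3(\bc)^{\otimes m}\otimes\ca_k^0\subseteq\ca_{m+k}^0$ by $(1)$. Hence $d(y\otimes a,\ca_{m+k})\to0$, so $y\otimes a\in\ca_\infty$, while continuity of $\rho_0$ together with $\rho_0(\ca_{m+k}^0)=0$ forces $\rho_0(y\otimes a)=0$; thus $y\otimes a\in\ca_\infty^0$, and linearity finishes $(3)$.

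Finally $(4)$ is the infinite analogue of $(2)$, and the inclusion $\ker\rho_m\subseteq M_3(\bc)^{\otimes m}\otimes\ca_\infty^0$ carries the real content. Decomposing $b\in\ca_\infty$ into slices $b=\sum_\alpha E_\alpha\otimes b_\alpha$ along $M_3(\bc)^{\otimes m}$, the key step is to show each slice $b_\alpha$ lies in $\ca_\infty$: from $\rho_n=id_m\otimes\rho_{n-m}$ for $n\ge m$, the slices of $\rho_n(b)\in\ca_n\subseteq M_3(\bc)^{\otimes m}\otimes\ca_{n-m}$ are exactly $\rho_{n-m}(b_\alpha)\in\ca_{n-m}$, so $\rho_k(b_\alpha)\in\ca_k$ for all $k$ and Corollary~\ref{Cstella} gives $b_\alpha\in\ca_\infty$. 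If moreover $\rho_m(b)=0$, then $\rho_0(b_\alpha)=0$ for every $\alpha$, i.e. $b_\alpha\in\ca_\infty^0$ and $b\in M_3(\bc)^{\otimes m}\otimes\ca_\infty^0$. The reverse inclusion is immediate from $(3)$ together with $\rho_m(y\otimes a)=y\otimes\rho_0(a)=0$ for $a\in\ca_\infty^0$. I expect the main obstacle to be exactly this slice argument in $(4)$: one must check that the finite-rank factor $M_3(\bc)^{\otimes m}$ can be split off coherently at every level $n$ (via $\rho_n=id_m\otimes\rho_{n-m}$ and the continuity of the slice maps) so that the limit slices $b_\alpha$ genuinely belong to $\ca_\infty$ and not merely to the ambient $\UHF(3^\infty)$.
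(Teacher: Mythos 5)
Your proof is correct. Parts $(1)$ and $(3)$ follow the paper essentially verbatim: the paper proves $(1)$ by the same regrouping of generators ($x_m\otimes y_k\otimes\beta_j^{p-k}=z_{m+k}\otimes\beta_j^{p-k}$ with $z_{m+k}\in M_3(\bc)^{\otimes(m+k)}$), and proves $(3)$ by exactly your limit argument $x_m\otimes b=\lim_p x_m\otimes\rho_p(b)$ via Lemma \ref{omegan}, Lemma \ref{rho-m-maps} and part $(1)$. Where you genuinely diverge is in $(2)$ and $(4)$. For $(2)$ the paper expands $b\in\ca_{m+p}$ in the canonical decomposition of Corollary \ref{multiplicativity}, $b=\sum_j\xi_j\alpha_j^{m+p}+\sum_{k,j}b_{j,k}\otimes\beta_j^{m+p-k}$, applies $\rho_m$ termwise and reads off the kernel from uniqueness; you instead use the structural inclusion $\ca_{m+p}\subseteq M_3(\bc)^{\otimes m}\otimes\ca_p$ (which does hold, by iterating Lemma \ref{AntoUHF} compatibly with the identifications in $M_3(\bc)^{\otimes(m+p+1)}$ --- one can also see it on the generators of Lemma \ref{indicesBis}, e.g. $\alpha_j^{m+p}=j2^{m-1}\otimes\alpha_2^p$) together with the factorization $\rho_m=id_m\otimes\rho_0$ and matrix-unit slice maps. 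The two arguments are equivalent --- your slices refine the $\eta_{k,j}$-components --- but yours has the advantage that it scales directly to $(4)$, where the paper is very terse: its proof of $(4)$ stops at ``$b\in\ker\rho_m$ iff $\rho_{m+p}(b)\in M_3(\bc)^{\otimes m}\otimes\ca_p^0$ for all $p$'' and leaves implicit the passage to the limit, namely why a norm limit of elements of $M_3(\bc)^{\otimes m}\otimes\ca_p^0$ lies in $M_3(\bc)^{\otimes m}\otimes\ca_\infty^0$. Your slice argument --- continuity of the slice maps, the identity $\rho_n=id_m\otimes\rho_{n-m}$ forcing $\rho_k(b_\alpha)\in\ca_k$ for all $k$, hence $b_\alpha\in\ca_\infty$ by Corollary \ref{Cstella}, and finally $\rho_0(b_\alpha)=0$ --- is precisely the bridge the paper omits, so your write-up of $(4)$ is in fact more complete than the published one. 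You correctly identified this as the delicate point; no gap remains.
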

\begin{proof}
$(1)$ $M_3(\bc)^{\otimes m}\otimes \ca_p^0$ is generated by elements of the form $x_m\otimes y_k\otimes \b_j^{p-k}$, $x_m\in M_3(\bc)^{\otimes m}$, $y_k\in M_3(\bc)^{\otimes k}$, so they are of the form $z_{m+k}\otimes  \b_j^{p-k}$, $z_{m+k}\in M_3(\bc)^{\otimes(m+k)}$, and therefore belong to $\ca_{m+p}^0$.\\

\item[$(2)$] The inclusion $M_3(\bc)^m\otimes \ca_p^0\subset \ker \rho_m$ is easy to prove. To show the opposite inclusion, we take an element in $\ca_{m+p}$
$$
b=\sum_{j=1}^3 \xi_j\a_j^{m+p}+\sum_{k=0}^{m+p-1}\sum_{j=1}^3 b_{j,k}\otimes\b_j^{m+p-k}\,,\;\xi_j\in \bc\,,\; b_{j,k}\in M_3(\bc)^{\otimes k}\,,
$$
and compute
$$
\rho_m(b)=\sum_{j=1}^3 \xi_j\a_j^{m}+\sum_{k=0}^{m-1}\sum_{j=1}^3 b_{j,k}\otimes\b_j^{m-k}\,,\;\xi_j\in \bc\,,\; b_{j,k}\in M_3(\bc)^{\otimes k}\,.
$$
If $\rho_m(b)=0$, then $\displaystyle
b=\sum_{k=m}^{m+p-1}\sum_{j=1}^3 b_{j,k}\otimes\b_j^{m+p-k}$, where $b_{j,k}\otimes\b_j^{m+p-k}$ is in $M_3(\bc)^k\otimes \b_j^{m+p-k}\subset M_3(\bc)^m\otimes M_3(\bc)^{k-m}\otimes \b_j^{m+p-k}\subset M_3(\bc)^m\otimes \ca_p^0$\,.

\item[$(3)$] For $b\in \ca_\infty^0$ and $x_m\in M_3(\bc)^{\otimes m}$,  proposition \ref{omegan} implies that  $x_m\otimes b=\lim_p\rho_{m+p}(x_m\otimes b)=\lim_p x_m\otimes \rho_p(b)$, where $\rho_p(b)\in \ca_p^0$. Because of $(1)$, $x_m\otimes \rho_p(b)\in \ca^0_{m+p}$, and, taking the limit,  $x_m\otimes b\in \ca_\infty^0$.

\item[$(4)$] We apply $(2)$: if $b\in \ca_{\infty}$, then $b\in \ker \rho_m$ if and only if, for any $p$, $\rho_{m+p}(b)\in \ker \rho_m$, that is, for any $p$, $\r_{m+p}(b)\in M_3(\bc)^{\otimes m}\otimes \ca_p^0$.
\end{proof}

\begin{cor}\label{corkerrhom} \-

\item[$(1)$] $M_3(\bc)^{\otimes m}\otimes \ca_\infty^0$ is a closed ideal. 

\item[$(2)$] The quotient $\ca_{\infty} \big/\big(M_3(\bc)^{\otimes m}\otimes \ca_\infty^0\big)$ is canonically isomorphic to $\ca_m$  (so it is finite dimensional).

\end{cor}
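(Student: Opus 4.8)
The plan is to obtain both statements as immediate consequences of the first isomorphism theorem for $C^*$-algebras, applied to the morphism $\rho_m:\ca_\infty\to\ca_m$, whose kernel has already been computed in Proposition \ref{kerrhom}$(4)$.

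First I would dispatch $(1)$. By Proposition \ref{restrizioni}, $\rho_m:\ca_\infty\to\ca_m$ is a $*$-homomorphism between $C^*$-algebras, and the kernel of any such map is automatically a closed two-sided ideal. Since Proposition \ref{kerrhom}$(4)$ identifies this kernel precisely as $M_3(\bc)^{\otimes m}\otimes\ca_\infty^0$, the ideal property together with closedness follows at once. Alternatively one could deduce closedness and the ideal property from Proposition \ref{kerrhom}$(3)$ and the fact that $\ca_\infty^0=\ker\rho_0$ is closed, but reading them off from $\ker\rho_m$ directly is cleanest.

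For $(2)$ I would invoke that a $*$-homomorphism of $C^*$-algebras factors through its kernel as an isometric $*$-isomorphism onto its (automatically closed) range; concretely, $\rho_m$ induces a $*$-isomorphism $\ca_\infty/\ker\rho_m\cong\rho_m(\ca_\infty)$. Two inputs then finish the argument: the surjectivity of $\rho_m:\ca_\infty\to\ca_m$ established above gives $\rho_m(\ca_\infty)=\ca_m$, while Proposition \ref{kerrhom}$(4)$ gives $\ker\rho_m=M_3(\bc)^{\otimes m}\otimes\ca_\infty^0$. Combining these, $\ca_\infty\big/\big(M_3(\bc)^{\otimes m}\otimes\ca_\infty^0\big)\cong\ca_m$, the isomorphism being canonical in that it is precisely the one induced by the natural restriction map $\rho_m$. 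Finite dimensionality of $\ca_m$ is then immediate from Definition \ref{IndDef}, since $\ca_m$ is a finite direct sum of full matrix algebras over $\bc$.

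Honestly there is no serious obstacle here: all the substantive work, namely the identification of $\ker\rho_m$ and the surjectivity of $\rho_m$, has been carried out in Proposition \ref{kerrhom} and in the preceding surjectivity corollary, so the present statement is essentially a repackaging via the first isomorphism theorem. The only point requiring a moment's care is that the quotient must be understood in the category of $C^*$-algebras, so that the induced map is genuinely a $C^*$-isomorphism rather than merely an algebraic one; this is guaranteed because $\rho_m$ is a $*$-homomorphism of $C^*$-algebras and hence has closed range.
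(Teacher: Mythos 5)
Your proof is correct and follows exactly the route the paper intends: Corollary \ref{corkerrhom} is stated there without a separate proof precisely because it is, as you say, the first isomorphism theorem applied to the $*$-epimorphism $\rho_m$, whose kernel is identified in Proposition \ref{kerrhom}$(4)$ and whose surjectivity is the corollary following Proposition \ref{EA2}. Nothing is missing.
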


\begin{cor}
Let us denote by $C_0(K)$ the continuous functions on $K$ vanishing on the three boundary vertices of $K$. Then the elements of  the UHF algebra $\UHF(3^\infty)$ of the form
$$
x_m\otimes f, \quad x_m\in M_3(\bc)^{\otimes m},\, f\in C_0(K)
$$
belong to, and indeed generate,  the ideal $\ca_\infty^0$.
\end{cor}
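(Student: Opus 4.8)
The statement splits into a membership claim and a generation claim, which I would treat in turn.

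For membership, the work is already done by the structural lemmas. The Remark preceding Lemma~\ref{rho-m-maps} identifies $C_0(K)$ with $\ker(\rho_0|_{C(K)})$, and since $C(K)\subseteq\ca_\infty$ and $\rho_0|_{C(K)}$ is the restriction of $\rho_0:\ca_\infty\to\bc(V_0)$, this gives $C_0(K)\subseteq\ca_\infty^0$. By Proposition~\ref{kerrhom}(3) one has $M_3(\bc)^{\otimes m}\otimes\ca_\infty^0\subseteq\ca_\infty^0$ for every $m$, so taking $f\in C_0(K)\subseteq\ca_\infty^0$ yields $x_m\otimes f\in\ca_\infty^0$ at once.

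For generation — which I read as: the closed two-sided ideal $J$ of $\ca_\infty$ generated by the elements $x_m\otimes f$ equals $\ca_\infty^0$ — the inclusion $J\subseteq\ca_\infty^0$ is immediate from the first part. For the reverse I would use the symmetric extensions. By Proposition~\ref{EA3}(2) every $a\in\ca_\infty^0$ is the norm limit of $\l_{[n,\infty)}(\rho_n(a))$, and by Lemma~\ref{rho-m-maps}(2) each $\rho_n(a)\in\ca_n^0$. Since $\ca_n^0$ is linearly spanned by the generators $x\otimes\b_j^k$ (Lemma~\ref{indicesBis}), it suffices to prove $\l_{[n,\infty)}(x\otimes\b_j^k)\in J$ for each of these. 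Reading off the computation in the proof of Proposition~\ref{EA2}, such an extension is a finite sum $\sum_\ell z_\ell\otimes h_{i_\ell}$ with $z_\ell\in M_3(\bc)^{\otimes N}$ and $h_i:=\l_{[0,\infty)}(\a^0_i)\in C(K)$ the three basic symmetric extensions of the vertex functions; for instance $\l_{[1,\infty)}(\b^1_j)=e_j\otimes h_1+e_{j+2}\otimes h_3$.

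The crux — and where I expect the main obstacle — is that $h_i\notin C_0(K)$, since $h_i(v_i)=1$, so the individual summands $z\otimes h_i$ are not of the stated form and the reduction does not close by inspection. To absorb them into $J$ I would combine two observations. First, for $i\neq\ell$ the products $h_ih_\ell$ and $h_i(1-h_i)$ vanish on all of $V_0$, hence lie in $C_0(K)$; consequently $z\otimes h_i^{2^M}-z\otimes h_i^{2^{M+1}}=z\otimes h_i^{2^M}(1-h_i^{2^M})\in M_3(\bc)^{\otimes N}\otimes C_0(K)\subseteq J$, so $z\otimes h_i\equiv z\otimes h_i^{2^M}\pmod J$ for all $M$. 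Second, and decisively, the factor $\b^1_j=e_j\otimes\a^0_1+e_{j+2}\otimes\a^0_3$ present in every generator of $\ca_n^0$ carries a letter $1$ or $3$, so the point where $h_i^{2^M}$ concentrates sits, after tensoring, at a midpoint vertex of $V_1\setminus V_0$ rather than at a vertex of $V_0$; this is exactly where $C_0(K)$ is allowed to be nonzero. I would then run the self-similar decomposition of the $h_i$ over the three subcells together with a $C_0$-cutoff supported near that off-$V_0$ spike, using the contraction estimate of Proposition~\ref{ext1}(3) to make the resulting telescoping of $M_3(\bc)^{\otimes m}\otimes C_0(K)$-terms norm convergent, and conclude $\l_{[n,\infty)}(x\otimes\b_j^k)\in J$, hence $\ca_\infty^0\subseteq J$.

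A cleaner but less self-contained alternative is the hull--kernel route: a closed ideal is the intersection of the primitive ideals containing it, so $J=\ca_\infty^0$ as soon as the two have the same hull. As $J\subseteq\ca_\infty^0$ and, by Corollary~\ref{corkerrhom} with $m=0$ (so $\ca_\infty/\ca_\infty^0\cong\bc^3$), the hull of $\ca_\infty^0$ consists exactly of the three characters $\xi_i\circ\rho_0$, it would remain to check that no further irreducible representation annihilates $J$: a faithful one cannot (since $J\neq 0$), and for a finite-dimensional one, which factors through some $\rho_N$, one must verify that each matricial component $\eta_{k,i}\circ\rho_N$ is nonzero on some $M_3(\bc)^{\otimes m}\otimes C_0(K)$. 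Either way the hard point is the same: controlling how the non-$C_0$ part of the symmetric extensions (equivalently, the matricial irreducibles) meets $C_0(K)$, the remainder being bookkeeping with the self-similar structure already in place.
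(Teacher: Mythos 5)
Your membership argument and your reduction of the generation claim to the generators of $\ca_n^0$ (via Proposition \ref{EA3}$(2)$, Lemma \ref{rho-m-maps} and the spanning of $\ca_n^0$ by the $x\otimes\b^k_j$) coincide with the paper's proof. The gap is precisely at what you call the crux: the obstacle you identify is illusory, and the paper closes the argument in one line by grouping the tensor factors differently. Since $\b^k_j\in\bc(V_k)$ is a \emph{scalar-valued} function vanishing on $V_0$, its symmetric extension $\l_{[k,\infty)}(\b^k_j)$ is a norm limit of elements of the $\bc(V_m)$'s, hence lies in $C(K)$ by Theorem \ref{Kinfinity}, and by the extension property of Proposition \ref{EA2}$(3)$ it restricts on $V_k\supset V_0$ to $\b^k_j$, hence vanishes on $V_0$: it belongs to $C_0(K)$. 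Since the maps $\l_m=id_m\otimes\l^{(t)}$ touch only the tail factor, $\l_{[n,\infty)}\bigl(x\otimes\b^k_j\bigr)=x\otimes\l_{[k,\infty)}(\b^k_j)$ with $x\in M_3(\bc)^{\otimes(n-k)}$, which is \emph{literally} of the stated form $x_m\otimes f$ with $f\in C_0(K)$. Thus the extensions of the generators already lie in the generating set, and one gets the stronger conclusion that the elements $x_m\otimes f$ have dense linear span in $\ca_\infty^0$; no ideal multiplication (and no cutoff or telescoping) is needed. Your mistake was to split one level too deep, into $z_\ell\otimes h_{i_\ell}$, and then to treat the non-$C_0$ summands individually instead of keeping the sum $e_j\otimes h_1+e_{j+2}\otimes h_3=\l_{[1,\infty)}(\b^1_j)$, which as a whole vanishes at the three addresses $i2^\infty$.

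The proposed workaround, besides being unnecessary, is not a proof as written and has a structural defect: the individual summands you manipulate do not belong to $\ca_\infty$. Indeed $\rho_1(e_j\otimes h_1)=j1$ in the labelling of Lemma \ref{indicesBis}, and $j1\notin\ca_1$ (whose commutative part is spanned by the $i2$ and $i1+(i+2)3$), so $e_j\otimes h_1\notin\ca_\infty$ by Corollary \ref{Cstella}; likewise the concentrated powers $e_j\otimes h_1^{2^M}$ are one-sided spikes at a midpoint which violate the gluing condition built into the generators $x\otimes\b^k_j$. So your congruences modulo $J$ are at best statements about differences computed in $\UHF(3^\infty)$, and the decisive step (``run the self-similar decomposition\dots with a $C_0$-cutoff\dots and conclude'') is left unexecuted; making it rigorous would force you to recombine the two summands, i.e.\ to rediscover the grouping above. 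The hull--kernel alternative is viable in principle, but there too the key verification is left open; with the correct grouping it is immediate, since $\rho_m\bigl(x\otimes\l_{[1,\infty)}(\b^1_j)\bigr)=x\otimes\b^1_j$ for $x\in M_3(\bc)^{\otimes(m-1)}$, on which $\eta_{m-1,j}$ evaluates to $x\neq0$.
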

\begin{proof}
The algebra  $M_3(\bc)^{\otimes m}\otimes C_0(K)$ is contained in $\ca^0_\infty$ by Corollary \ref{corkerrhom} (1). 
From Lemma \ref{indicesBis}, one deduces easily that $\ca_n^0$ is linearly generated by the $x\otimes \beta_j^k$, $1\leq k\leq n$, $j=1,2,3$, $x\in M_3(\bc)^{\otimes (n-k)}$.
Given $b\in \ca^0_\infty$ one has, by Lemma \ref{rho-m-maps}, $\rho_n(b)\in \ca_n^0$, hence $\rho_n(b)$ is a linear combination of $x\otimes \beta_j^k$.
Finally, by Proposition \ref{EA3} (2), $b=\lim_n \lambda_{[n,\infty)}(\rho_n(b))$, where  $\lambda_{[n,\infty)}(\rho_n(b)$ is a linear combination of elements of the form $x\otimes \lambda_{[k,\infty)}(\beta_j^k)$, each of them lying in some $M_3(\bc)^{\otimes (n-k)}\otimes C_0(K)$. 
\end{proof}
\begin{lem}\label{inter}
$\bigcap_m ker(\rho_m)=\{0\}$.
\end{lem}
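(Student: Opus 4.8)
The plan is to show that the only element of $\ca_\infty$ annihilated by every restriction map $\rho_m$ is zero, using the fact that the $\rho_m$ recover elements of the UHF algebra in the limit. The crucial tool is Lemma \ref{omegan}, which says that for any $a\in\UHF(3^\infty)$ we have $a=\lim_{n\to\infty} id_{n+1}\otimes\omega^{\otimes[n+2,\infty)}(a)$; taking $\omega=e_{22}^*$ gives precisely $a=\lim_m \rho_m(a)$. So if $a\in\ca_\infty$ lies in $\bigcap_m\ker(\rho_m)$, then every term $\rho_m(a)$ in this approximating sequence vanishes, and hence $a=\lim_m\rho_m(a)=0$.

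Concretely, first I would recall that $\ca_\infty\subset\UHF(3^\infty)$, so any $a\in\bigcap_m\ker(\rho_m)$ is in particular an element of the UHF algebra. Second, I would invoke Lemma \ref{omegan} with the specific state $\omega=e_{22}^*$ (the state $m\mapsto m_{22}$), noting that with this choice the completely positive contraction $id_{n+1}\otimes\omega^{\otimes[n+2,\infty)}$ is exactly the map $\rho_n$ defined in Proposition \ref{restrizioni}. Third, since $a\in\ker(\rho_m)$ for every $m$, the approximating sequence $\rho_m(a)$ is identically zero, and Lemma \ref{omegan} forces $a=0$. This establishes $\bigcap_m\ker(\rho_m)=\{0\}$.

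I expect this proof to be essentially a one-line deduction once the identification between $\rho_n$ and the map of Lemma \ref{omegan} is made explicit; the only point requiring a moment's care is checking that the state $\omega=e_{22}^*$ used in defining $\rho_n$ indeed satisfies the hypotheses of Lemma \ref{omegan}, which it does since Lemma \ref{omegan} holds for an arbitrary state $\omega$ on $M_3(\bc)$. There is no real obstacle here: the entire content has already been prepared by the earlier lemmas, and the result is the expected separation statement asserting that the family $\{\rho_m\}$ of restriction maps is jointly faithful on $\ca_\infty$.
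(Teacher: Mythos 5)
Your proof is correct and is essentially the paper's own argument: the paper's proof is the one-line deduction $b=\lim_m\rho_m(b)=0$, which implicitly rests on Lemma \ref{omegan} with $\omega=e_{22}^*$, exactly the identification you spell out (indeed, Proposition \ref{restrizioni} already defines $\rho_n$ as $id_{n+1}\otimes(e_{22}^*)^{\otimes[n+2,\infty)}I_{[n+2,\infty)}$, so no separate verification is needed). Your version just makes explicit what the paper leaves tacit.
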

\begin{proof}
If $b\in \bigcap_m ker(\rho_m)$, then $b=\lim_m\rho_m(b)=0$\,.
\end{proof}


\subsection{Representations orthogonal to every $\rho_m$.}

Each $\rho_m$ can be interpreted as a finite dimensional representation of $\ca_{\infty}$ in $\ca_m\subset M_3(\bc)^{\otimes m+1}=\cb\big((\bc^3)^{\otimes m+1}\big)$. This family of representations can be summarized into
$$\rho_\infty := \oplus_m \rho_m\,: \,\ca_{\infty}\to \oplus_m \cb\big((\bc^3)^{\otimes m+1}\big)\,.$$

\begin{prop}\label{disjunction}
\item[$(1)$]  Let $\pi$ be a representation of $\ca_{\infty}$ in some $\cb(H)$, which is orthogonal to every $\rho_m$, i.e. orthogonal to $\rho_\infty$ ({\rm orthogonal } means {\rm no intertwining operators}).
Then $\pi$ is faithful and extends to a representation $\overline \pi$ from $\UHF(3^\infty)$ in $\cb(H)$, with $\pi(\ca_{\infty})''=\overline \pi(\UHF(3^\infty))''$.
\item[$(2)$] If $\pi$ is a representation of the UHF such that $\pi_{|\ca_{\infty}}\perp \oplus_m \rho_m$, then $\pi(\ca_{\infty})''=\pi(\UHF(3^\infty))''$.
\end{prop}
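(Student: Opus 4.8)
I organize the argument around a single structural fact about the ideals $\ker\rho_m = M_3(\bc)^{\otimes m}\otimes\ca_\infty^0$ (Proposition \ref{kerrhom}), which I will prove first since it is the heart of both statements.

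\textbf{Step 1: orthogonality forces nondegeneracy on each $\ker\rho_m$.} Working with the representation $\pi$ of $\ca_\infty$ (in (2) with $\pi$ replaced by $\pi|_{\ca_\infty}$), I would fix $m$ and set $J=\ker\rho_m$, an ideal of $\ca_\infty$. The projection $p_m$ onto $\overline{\pi(J)H}$ is central in $\pi(\ca_\infty)''$, and on $(1-p_m)H$ the representation $\pi$ kills $J$, hence factors through $\ca_\infty/\ker\rho_m\cong\ca_m$ (Corollary \ref{corkerrhom}) as a representation $\sigma_m$. Since $\ca_m$ is finite dimensional, $\sigma_m$ is a sum of irreducibles of $\ca_m$; each such irreducible occurs in the faithful representation $\ca_m\subset\cb\big((\bc^3)^{\otimes m+1}\big)$, i.e. in $\rho_m$. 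Thus a nonzero $\sigma_m$ would be a subrepresentation of $\pi$ intertwining $\rho_m$, contradicting $\pi\perp\rho_m$. Hence $\sigma_m=0$, so $p_m=1$ and $\pi|_{\ker\rho_m}$ is nondegenerate for every $m$. I expect this central-cover/disjointness step to be the clean conceptual core of the whole proposition.

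\textbf{Step 2: the von Neumann algebra identifications.} Taking $m=0$ gives $\pi|_{\ca_\infty^0}$ nondegenerate, whence $\pi(\ca_\infty)''=\pi(\ca_\infty^0)''$ (an ideal acting nondegenerately generates the same bicommutant, via $\pi(a)=\lim_\lambda\pi(ab_\lambda)$ along an approximate unit $(b_\lambda)$ of $\ca_\infty^0$). For part (2), where $\pi$ is already a representation of $\UHF(3^\infty)$, this finishes quickly: for $x\in M_3(\bc)^{\otimes m}$ the element $I\otimes b_\lambda$ is an approximate unit of $\ker\rho_m=M_3(\bc)^{\otimes m}\otimes\ca_\infty^0$, so nondegeneracy gives $\pi(I\otimes b_\lambda)\to 1$ strongly, and therefore $\pi(x)=\lim_\lambda \pi(x\otimes 1)\pi(I\otimes b_\lambda)=\lim_\lambda\pi(x\otimes b_\lambda)$ strongly, with $x\otimes b_\lambda\in\ca_\infty^0$. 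Hence $\pi\big(\UHF(3^\infty)\big)\subseteq\pi(\ca_\infty^0)''=\pi(\ca_\infty)''$, and since the reverse inclusion is trivial, $\pi(\ca_\infty)''=\pi\big(\UHF(3^\infty)\big)''$. This settles (2).

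\textbf{Step 3: constructing the extension in (1).} Here $\pi$ is not yet defined on the UHF algebra, so I would \emph{define} operators $\overline\pi(x):=\lim_\lambda\pi(x\otimes b_\lambda)$ (strong limit) for $x\in M_3(\bc)^{\otimes m}$. The limit exists because on the dense set $\pi(\ker\rho_m)H$ one computes $\pi(x\otimes b_\lambda)\pi(u\otimes b)\zeta=\pi(xu\otimes b_\lambda b)\zeta\to\pi(xu\otimes b)\zeta$, the net being uniformly bounded by $\|x\|$. Equivalently, $\overline\pi(x)$ is the value at $x\otimes 1\in M(\ker\rho_m)$ of the canonical extension of the nondegenerate $\pi|_{\ker\rho_m}$ to the multiplier algebra. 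One checks that $\overline\pi$ is a unital $*$-homomorphism, that the definitions for different $m$ agree (using $\ker\rho_{m+1}\subseteq\ker\rho_m$ and uniqueness of the multiplier extension), and hence that $\overline\pi$ extends by continuity to $\overline\pi:\UHF(3^\infty)\to\cb(H)$ with $\overline\pi\big(\UHF(3^\infty)\big)\subseteq\pi(\ca_\infty)''$. Once $\overline\pi$ is known to restrict to $\pi$ on $\ca_\infty$, the remaining conclusions are immediate: simplicity of $\UHF(3^\infty)$ makes the unital $\overline\pi$ injective, hence isometric, so $\pi=\overline\pi|_{\ca_\infty}$ is faithful; and $\pi(\ca_\infty)''=\overline\pi\big(\UHF(3^\infty)\big)''$ follows by combining $\overline\pi(\ca_\infty)=\pi(\ca_\infty)$ with $\overline\pi\big(\UHF(3^\infty)\big)\subseteq\pi(\ca_\infty)''$.

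\textbf{Main obstacle.} The delicate point is precisely the verification that $\overline\pi|_{\ca_\infty}=\pi$, equivalently $\pi(\ca_\infty^0)\subseteq\overline\pi\big(\UHF(3^\infty)\big)$. The naive route fails because $\ca_\infty^0$ is \emph{not} an ideal of the UHF algebra and $M_3(\bc)^{\otimes m}\otimes 1\not\subseteq M(\ca_\infty^0)$, so one cannot simply restrict the multiplier extension of $\pi|_{\ca_\infty^0}$ to $\UHF(3^\infty)$. Using that $\overline\pi$ is a homomorphism, the identity $(x_m\otimes 1)(1\otimes f)=x_m\otimes f$ with $f\in C_0(K)$, and the multiplier relation $\overline\pi(x_m\otimes 1)\pi(g)=\pi((x_m\otimes 1)g)$ for $g\in\ker\rho_m$, I can reduce the required equality on the generators $x_m\otimes f$ of $\ca_\infty^0$ (Corollary before Lemma \ref{inter}) to the single base case $\overline\pi(f)=\pi(f)$ for $f\in C_0(K)$. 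Establishing this base case is where the real work lies: approximating $f$ by its matrix truncations $(id_{n+1}\otimes e_{22}^{*\otimes})(f)$ forces the level $n\to\infty$, while the multiplier relation only supplies test vectors from the \emph{shrinking} ideals $\ker\rho_{n+1}$, so the two limits must be reconciled by a careful control of approximate units across the tensor shifts. I expect the cleanest way through to use the symmetric-extension density $f=\lim_n\l_{[n,\infty)}(\rho_n(f))$ of Proposition \ref{EA3}(2), which expresses $f$ through elements that genuinely live in $\ca_\infty$ and whose images under both $\pi$ and $\overline\pi$ can be matched level by level.
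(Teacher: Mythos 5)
Your Steps 1 and 2 match the paper's proof almost verbatim: the nondegeneracy claim via the central projection onto $\overline{\pi(\ker\rho_m)H}$ (your version is in fact slightly more careful than the paper's, since you justify why a representation factoring through $\ca_m$ must intertwine $\rho_m$, using that every irreducible of the finite-dimensional $\ca_m$ occurs in the faithful inclusion $\ca_m\subset\cb\big((\bc^3)^{\otimes m+1}\big)$), the approximate-unit argument for part $(2)$, and the construction $\overline\pi(x)=\lim_n\pi(x\otimes u_n)$ with compatibility across levels are all exactly the paper's steps. But there is a genuine gap, and you have located it yourself: you never prove that $\overline\pi$ restricts to $\pi$ on $\ca_\infty$. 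Your proposed reduction to the ``base case'' $\overline\pi(I^{\otimes m}\otimes f)=\pi(I^{\otimes m}\otimes f)$, $f\in C_0(K)$, is legitimate bookkeeping but does not decrease the difficulty: the base case contains the full obstruction you describe (the multiplier relation $\overline\pi(x\otimes 1)\pi(g)=\pi((x\otimes1)g)$ is only available for $g$ in the \emph{shrinking} ideals $\ker\rho_{k+1}$, while approximating $f$ by $\rho_k(f)$ forces $k\to\infty$, so for a fixed test element $g\in\ker\rho_m$ the relation is unavailable for the large $k$ you need). The closing sentence ``I expect the cleanest way through\dots matched level by level'' is a hope, not an argument; moreover, even granting the base case, agreement on the generators of $\ca_\infty^0$ only handles the codimension-$3$ ideal $\ca_\infty^0\subset\ca_\infty$, and you do not address the remaining lifts of $\bc(V_0)$.

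The paper closes precisely this step with an order-theoretic device that sidesteps multipliers entirely. Fix $b\geq0$ in $\ca_\infty$ and $\eps>0$, choose $m$ with $\|b-\rho_m(b)\|\leq\eps$ in $\UHF(3^\infty)$, so that $\rho_m(b)-\eps I\leq b\leq\rho_m(b)+\eps I$, and conjugate this double inequality by $I_3^{\otimes m+1}\otimes u_n^{1/2}$. The crucial point is that the resulting sandwich
\[
\rho_m(b)\otimes u_n-\eps\, I_3^{\otimes m+1}\otimes u_n
\;\leq\;
\big(I_3^{\otimes m+1}\otimes u_n^{1/2}\big)\,b\,\big(I_3^{\otimes m+1}\otimes u_n^{1/2}\big)
\;\leq\;
\rho_m(b)\otimes u_n+\eps\, I_3^{\otimes m+1}\otimes u_n
\]
involves only elements of $\ca_\infty$ (by Proposition \ref{kerrhom}\,$(3)$, $\rho_m(b)\otimes u_n\in M_3(\bc)^{\otimes m+1}\otimes\ca_\infty^0\subset\ca_\infty^0$), so one may apply $\pi$ --- where $\overline\pi$ is not yet known to agree with it --- and then let $n\to\infty$: the outer terms converge strongly to $\overline\pi(\rho_m(b))\mp\eps I_H$ by the very definition of $\overline\pi$, while the middle term converges to $\pi(b)$ by nondegeneracy. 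This yields $\overline\pi(\rho_m(b))-\eps I_H\leq\pi(b)\leq\overline\pi(\rho_m(b))+\eps I_H$, hence $\overline\pi(b)-2\eps I_H\leq\pi(b)\leq\overline\pi(b)+2\eps I_H$, and $\eps\downarrow0$ gives $\pi=\overline\pi$ on positive, hence all, elements --- no reduction to $C_0(K)$ generators is needed. This positivity sandwich, which transports the norm approximation $b\approx\rho_m(b)$ through $\pi$ via operator ordering rather than through a multiplier identity, is the one idea missing from your proposal; everything else you wrote is sound and faithful to the paper's route.
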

\begin{proof}
$(1)$ Let us first notice that the UHF being a simple algebra, all its representations are faithful and isometric. Hence, if $\pi$ extends to $\UHF(3^\infty)$, it needs to be faithful.
\\
 With the assumptions of $(1)$, we make the following claim\,:\\
{\it For any $m\in \bn$, the restriction of $\pi$ to $M_3(\bc)^{\otimes m}\otimes \ca_\infty^0$ is non degenerate.}\\
To prove this claim, notice that, $M_3(\bc)^{\otimes m}\otimes \ca_\infty^0$ being an ideal (Corollary \ref{corkerrhom}), the space ${\big( (M_3(\bc)^{\otimes m}\otimes \ca_\infty^0)H\big)}^\perp$ is invariant for $\ca_{\infty}$, and thus is the space of a subrepresentation $\pi_0$ of $\pi$. On this space, $M_3(\bc)^{\otimes m}\otimes \ca_\infty^0$ acts by the null action. As $M_3(\bc)^{\otimes m}\otimes \ca_\infty^0= \ker(\rho_m)$ (Lemma \ref{kerrhom}), $\pi_0$ factorizes through $\rho_m$. As $\pi$ is orthogonal to $\rho_m$, this must be the null representation\,: ${\big( (M_3(\bc)^{\otimes m}\otimes \ca_\infty^0)H\big)}^\perp=\{0\}$. The claim is proved.
\\
We continue by fixing some $m$ and choosing a positive increasing approximate unit $(u_n)$ in the ideal $\ca_{\infty}^0$. Note that the $(I_3)^{\otimes m}\otimes u_n\in M_3(\bc)^{\otimes m}\otimes \ca_\infty^0\subset \ca_\infty^0$ also form  an approximate unit for $\ca_\infty^0$ for any $m$, so that, according to the claim above, $\lim_n \pi((I_3)^{\otimes m}\otimes u_n)=I_H$ (strong limit)\,. The map
$$
\pi_m\,:\,M_3(\bc)^{\otimes m}\to \pi(\ca_{\infty})'' \,, \quad \pi_m(x)=\lim_n \pi(x\otimes u_n) \,,
$$
is well defined (for $x$ positive, it is an increasing sequence in $\cb(H)$). It is a morphism of $*$-algebras, and it does not depend on the choice of an approximate unit. Choosing  $I_3\otimes u_n$ instead of $u_n$ shows that
$$
\pi_{m+1}(x\otimes I_3)=\lim_n \pi(x\otimes I_3 \otimes u_n)=\pi_m(x) \,, \quad x\in M_3(\bc)^{\otimes m} \,,
$$
i.e. that the restriction of $\pi_{m+1}$ to $M_3(\bc)^{\otimes m}$ is $\pi_m$.

Taking the limit on $m$, we get a representation $\overline \pi$ from $\UHF(3^\infty)$ into $\pi(\ca_{\infty})''$. It remains to show that $\overline \pi$ extends $\pi$, i.e. coincides with $\pi$ on $\ca_{\infty}$.

Fix $b\geq 0$ in $\ca_{\infty}$, and $\varepsilon >0$. For $m$ large enough, we have, in $\UHF(3^\infty)$, $\|b-\rho_m(b)\|\leq \varepsilon$. Choosing an approximate unit $(u_n)$ in $\ca_\infty^0$, we have the following sequence of inequalities in $\UHF(3^\infty)$:
\begin{equation*}\begin{split}
\rho_m(b)-\eps I \leq &b \leq \rho_m(b)+\eps I \\
\big(I_3^{\otimes m+1}\otimes u_n\big)\big(\rho_m(b)-\eps I\big) \leq \big(I_3^{\otimes m+1}\otimes u_n^{1/2}\big)\,&b\,\big(I_3^{\otimes m+1}\otimes u_n^{1/2}\big) \leq \big(I_3^{\otimes m+1}\otimes u_n\big)\big(\rho_m(b)+\eps I\big)\\
\rho_m(b)\otimes u_n -\eps\,I_3^{\otimes m+1}\otimes u_n \leq \big(I_3^{\otimes m+1}\otimes u_n^{1/2}\big)\,&b\,\big(I_3^{\otimes m+1}\otimes u_n^{1/2}\big)  \leq \rho_m(b)\otimes u_n +\eps\,I_3^{\otimes m+1}\otimes u_n
\end{split}\end{equation*}
Since the last double inequality involves only elements in $\ca_{\infty}$, we can apply $\pi$ to get the corresponding inequalities in $\cb(H)$, then let $n\to \infty$ and apply again $\rho_m(b)-\eps I \leq b \leq \rho_m(b)+\eps I$. This provides
\begin{align*}
\pi\big(\rho_m(b)\otimes u_n\big) -\eps\,\pi(I_3^{\otimes m+1}\otimes u_n) & \leq \pi\big(I_3^{\otimes m+1}\otimes u_n^{1/2}\big)\,\pi(b)\,\pi\big(I_3^{\otimes m+1}\otimes u_n^{1/2}\big) \\
& \leq \pi\big(\rho_m(b)\otimes u_n\big) +\eps\,\pi\big(I_3^{\otimes m+1}\otimes u_n\big) \\
\overline \pi(\rho_m(b))-\eps I_H & \leq \pi(b)\leq \overline \pi(\rho_m(b))+\eps I_H \\
\overline \pi (b)-2\eps I_H & \leq \pi(b)\leq \overline \pi(b)+2\eps I_H\,,
\end{align*}
which terminates the proof of $(1)$.

\item[$(2)$] According to the claim above, the restriction of $\pi$ to any $M_3(\bc)^{\otimes m}\otimes \ca_\infty^0$ is non degenerate. If $(u_n)$ is an approximate unit for $\ca_\infty^0$, we have $\lim_n\pi(I_3^{\otimes m}\otimes u_n)=I_H$ for the strong topology, which provides
$$
\pi(x_m)=\lim_n \pi(x_m)\pi(I_3^{\otimes m}\otimes u_n)=\lim_n\pi(x_m\otimes u_n) \,, \quad x_m\in M_3(\bc)^{\otimes m}\,.$$
This proves $\pi(M_3(\bc)^{\otimes m})\subset \pi(\ca_{\infty})''$, for any m.
\end{proof}

In the next subsection, we prove the existence of such representations and provide a criterion for them.

\smallskip

\subsection{A criterion of disjunction}

Let us start by noticing that any representation $\pi$ of $\ca_{\infty}$ in a  Hilbert space provides by restriction a representation of $C(K)$,  hence a probability measure $\mu_\pi$ on $K$ (defined up to equivalence) such that $\pi(C(K))'' \approx L^\infty(K,\mu_\pi)$.

\begin{prop}\label{pi-ort}
If $\mu_\pi(V_\infty)=0$, then $\pi\perp \oplus_m\rho_m$.
\end{prop}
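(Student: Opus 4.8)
The plan is to show that the hypothesis $\mu_\pi(V_\infty)=0$ forces $\pi$ to have no intertwiner with any finite-dimensional representation $\rho_m$, by exploiting the fact that each $\rho_m$ factors through the finite-dimensional quotient $\ca_m$, whose support in $K$ is concentrated on the finite vertex set $V_m\subset V_\infty$. The key structural input is Proposition \ref{kerrhom}(4): $\ker\rho_m=M_3(\bc)^{\otimes m}\otimes\ca_\infty^0$, together with Corollary \ref{corkerrhom}(2), which identifies the quotient $\ca_\infty/\ker\rho_m$ with $\ca_m$. Any intertwiner $T$ between $\pi$ and $\rho_m$ must intertwine the two actions, and since $\rho_m$ annihilates $\ker\rho_m$, the operator $TT^*$ (or the range of $T$) would detect a subrepresentation of $\pi$ that also kills $\ker\rho_m$, hence factors through $\ca_m$.

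First I would argue by contradiction: suppose $T\neq 0$ intertwines $\pi$ with $\rho_m$, so $\pi(a)T=T\rho_m(a)$ for all $a\in\ca_\infty$. Restricting to $a\in\ker\rho_m$ gives $\pi(a)T=0$, so the range projection of $T$ lies in a subspace on which $\pi(\ker\rho_m)$ acts trivially; equivalently, the subrepresentation of $\pi$ on $\overline{T H_m}$ annihilates $\ker\rho_m=M_3(\bc)^{\otimes m}\otimes\ca_\infty^0$ and therefore factors through $\ca_m$, a finite-dimensional quotient. The second and decisive step is to connect this to the measure $\mu_\pi$: restricting $\pi$ to the commutative subalgebra $C(K)\subset\ca_\infty$ gives the representation associated with $\mu_\pi$ on $L^\infty(K,\mu_\pi)$, and a subrepresentation factoring through $\ca_m$ restricts on $C(K)$ to a representation factoring through $\rho_0|_{C(K)}$ composed with the finite vertex evaluations, i.e. supported on $V_m\subset V_\infty$. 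Since $\mu_\pi(V_\infty)=0$, the spectral projection of $\pi|_{C(K)}$ corresponding to $V_m$ is zero, forcing the range of $T$ to be $\{0\}$, a contradiction.

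The main obstacle I anticipate is making the passage from ``a subrepresentation of $\pi$ factors through $\ca_m$'' to ``its restriction to $C(K)$ is supported on $V_m$'' fully rigorous, since one must track how the finite-dimensional quotient $\ca_m$ interacts with the maximal abelian subalgebra $C(V_m)\subset\ca_m$ (Corollary \ref{MASA1}) and ensure the vertex characters of $\ca_m$ correspond precisely to point masses on $V_m$ under the identification of $C(K)$ with functions on $K$. Concretely, I would use that $\rho_m$ composed with the inclusion $C(K)\hookrightarrow\ca_\infty$ equals the classical restriction $C(K)\to C(V_m)$, so that any vector in the range of $T$ is a joint eigenvector for $\pi(f)$, $f\in C(K)$, with eigenvalues $f(v)$ for some $v\in V_m$. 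Such a vector would be a point-mass atom for $\mu_\pi$ at $v\in V_\infty$, and $\mu_\pi(V_\infty)=0$ rules this out. The remaining details — choosing an approximate unit in $\ca_\infty^0$ to realize the non-degeneracy argument cleanly, and verifying the spectral-projection vanishing — are routine once this correspondence is established.
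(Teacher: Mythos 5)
Your proposal is correct and takes essentially the same route as the paper: assume non-orthogonality, extract a subrepresentation of $\pi$ that factors through $\rho_m$ (the paper phrases this directly as a subrepresentation of the form $\pi_m\circ\rho_m$, you make the intertwiner argument explicit via $\pi(a)T=T\rho_m(a)$ and $\ker\rho_m$), observe that its restriction to $C(K)$ has spectral measure concentrated on $V_m\subset V_\infty$, and contradict $\mu_\pi(V_\infty)=0$. One cosmetic slip: a general vector in $\overline{TH_m}$ is not a joint eigenvector for $\pi(C(K))$ (the subspace only decomposes as a direct sum of joint eigenspaces indexed by $v\in V_m$, since the restriction factors through the finite-dimensional abelian algebra $C(V_m)$), but choosing a nonzero vector in a single eigenspace yields the atom at some $v\in V_m$ and the argument goes through.
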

\begin{proof}
Let $\pi_0=\pi_m\circ \rho_m$ be a representation of $\ca_{\infty}$ factorizing through $\rho_m$ ($\pi_m$ being any representation of $\ca_m$). Its restriction to $C(K)$ is of the form $C(K)\ni f \to \pi_m(f_{|V_m})$ and has a spectral measure $\mu_{\pi_0}$ supported by $V_m$.

If $\pi$ has a subrepresentation of the form $\pi_0=\pi_m\circ \rho_m$, then $\mu_\pi\geq \mu_{\pi_0}$ dominates a measure concentrated on $V_m$, which implies $\mu_\pi(V_m)\not=0$. By contradiction, we get the proposition.
\end{proof}

\begin{ex}\label{Powers}
Let $\omega$ be a diagonal faithful state on $M_3(\bc)$. Then the state $\omega^{\otimes \bn}$ on $\UHF(3^\infty)$, restricted to $\ca_{\infty}$, induces on $C(K)$ the selfsimilar measure with weights $(\omega_{11},\omega_{22},\omega_{33})$. This measure being diffuse, the associated representation of $\ca_{\infty}$ is disjoint from any $\rho_m$.

\noindent In particular, if $\tau$ is the restriction to $\ca_{\infty}$ of the canonical trace $\tau_\infty$ on $\UHF(3^\infty)$, we have
$$
\pi_\tau \perp \oplus_m\rho_m \text{ and } \pi_\tau(\ca_{\infty})''=\pi_{\tau_\infty}(\UHF(3^\infty))''\,.
$$
\end{ex}

\subsection{A decomposition theorem}\-
Proposition \ref{disjunction} provides a natural decomposition of any representation of $\ca_{\infty}$ into a discrete and a continuous part. Using the notation $\rho_\infty=\oplus_m\rho_m$, we have:

\begin{thm}\label{thm:decomposition}\-
Any representation $\pi$ of $\ca_{\infty}$ can be uniquely decomposed as the direct sum of two representations
$$
\pi=\pi^d\oplus \pi^c \,,
$$
with $\pi^d$ contained in  a multiple of $\rho_\infty$ (i.e. $\pi^d$ is weakly contained in $\rho_\infty$) and $\pi^c\perp \rho_\infty$.\\
Moreover, $\pi^d$ is a direct sum of finite dimensional representations, while $\pi^c$ has no finite dimensional summand and extends to a representation $\overline {\pi^c}$ of $\UHF(3^\infty)$ with $\pi^c(\ca_{\infty})''=\overline {\pi^c}(\UHF(3^\infty))''$.

\end{thm}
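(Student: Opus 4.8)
The plan is to build the discrete part by hand out of the finite-dimensional invariant subspaces, to recognise it as a subrepresentation of a multiple of $\rho_\infty$ via a short rigidity argument, and then to read off the continuous part as the orthogonal complement, to which Proposition \ref{disjunction} applies verbatim.

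First I would set $H^d\subseteq H$ to be the closed linear span of all finite-dimensional $\pi$-invariant subspaces, and $H^c=(H^d)^\perp$. Since $\pi$ is a $*$-representation every invariant subspace is reducing, so both $H^d$ and $H^c$ are reducing and we may put $\pi^d=\pi|_{H^d}$, $\pi^c=\pi|_{H^c}$. A Zorn's lemma argument yields a maximal family of pairwise orthogonal finite-dimensional irreducible invariant subspaces; compressing an arbitrary finite-dimensional invariant subspace by the projection onto the orthogonal complement of the closed span of this family (itself a reducing subspace) and invoking maximality shows that this span equals $H^d$. Hence $\pi^d$ is a direct sum of finite-dimensional irreducible representations, and by construction $H^c$ contains no nonzero finite-dimensional invariant subspace; in particular $\pi^c$ has no finite-dimensional summand.

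The crux is the rigidity statement that every finite-dimensional irreducible representation $\sigma$ of $\ca_\infty$ is equivalent to a subrepresentation of $\rho_\infty$. I would argue by contradiction: if $\sigma\perp\rho_\infty$, then Proposition \ref{disjunction}$(1)$ extends $\sigma$ to a representation $\overline\sigma$ of $\UHF(3^\infty)$ with $\overline\sigma(\UHF(3^\infty))''=\sigma(\ca_\infty)''$, which is finite-dimensional since $\sigma$ is; but $\UHF(3^\infty)$ is simple and infinite-dimensional, hence admits no nonzero finite-dimensional representation, a contradiction. So $\sigma$ is not disjoint from $\rho_\infty$, and being irreducible it embeds into $\rho_\infty=\oplus_m\rho_m$, thus into some finite-dimensional $\rho_m$. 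Applying this to every irreducible summand of $\pi^d$ shows that $\pi^d$ is equivalent to a subrepresentation of a multiple of $\rho_\infty$, in particular weakly contained in $\rho_\infty$.

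Next I would verify $\pi^c\perp\rho_\infty$: a nonzero intertwiner $T:H_{\rho_\infty}\to H^c$ would be nonzero on some summand $H_{\rho_m}$, and since $H_{\rho_m}$ is finite-dimensional its image would be a nonzero finite-dimensional $\pi^c$-invariant subspace of $H^c$, which has been excluded. Hence $\pi^c\perp\rho_\infty$, and Proposition \ref{disjunction}$(1)$ delivers the extension $\overline{\pi^c}$ to $\UHF(3^\infty)$ with $\pi^c(\ca_\infty)''=\overline{\pi^c}(\UHF(3^\infty))''$. For uniqueness, suppose $\pi=\sigma^d\oplus\sigma^c$ is another such splitting, with reducing projections $q^d,q^c\in\pi(\ca_\infty)'$. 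A representation that is simultaneously a submultiple of $\rho_\infty$ and disjoint from $\rho_\infty$ must vanish; so $\pi^d$ (a submultiple of $\rho_\infty$) is disjoint from $\sigma^c$ ($\perp\rho_\infty$), which for reducing subspaces means $q^c\,\pi(\ca_\infty)'\,p^d=0$, and taking the identity gives $q^cp^d=0$, i.e. $H^d\subseteq q^dH$. The symmetric argument, using $\pi^c\perp\rho_\infty$ against $\sigma^d$ a submultiple of $\rho_\infty$, gives $q^dH\subseteq H^d$, whence equality and uniqueness. I expect the rigidity step to be the only real obstacle, its whole force coming from pairing Proposition \ref{disjunction} with the absence of finite-dimensional representations of the simple algebra $\UHF(3^\infty)$; the remaining bookkeeping on disjointness and reducing projections is routine.
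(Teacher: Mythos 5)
Your proof is correct and takes essentially the approach the paper intends: the paper states the theorem without a written proof, presenting it as a direct consequence of Proposition \ref{disjunction}, and your argument (splitting off the closed span of the finite-dimensional invariant subspaces, then applying Proposition \ref{disjunction} to the complement) supplies exactly the details left implicit. Your rigidity step --- a finite-dimensional irreducible representation cannot be disjoint from $\rho_\infty$, since Proposition \ref{disjunction}$(1)$ would force it to be faithful and to extend to the simple infinite-dimensional $\UHF(3^\infty)$ --- is likewise the content of the paper's own description of the discrete spectrum in Section \ref{spettrodiscreto}, so no new idea is introduced and none is missing.
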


\begin{dfn}
We shall say that a representation $\pi$ of $\ca_{\infty}$ is discrete if it is contained in  a multiple of $\oplus_m \rho_m$, i.e. $\pi=\pi^d$ and $\pi^c=0$.

We shall say that a representation $\pi$ of $\ca_{\infty}$ is continuous if $\pi\perp\oplus_m \rho_m$, i.e. $\pi=\pi^c$ and $\pi^d=0$. Note that a continuous representation is faithful.
\end{dfn}

\subsection{The discrete spectrum of $\ca_{\infty}$.}\label{spettrodiscreto}\-

An irreducible representation of $\ca_{\infty}$ is either discrete or continuous, which means that, with obvious notations, the spectrum $\widehat{\ca_{\infty}}$ of $\ca_{\infty}$ is the disjoint union
$$
\widehat{\ca_{\infty}}=\big(\widehat{\ca_{\infty}}\big)^d\cup \big(\widehat{\ca_{\infty}}\big)^c
$$
of its discrete part and its continuous part.

Elements in  $\big(\widehat{\ca_{\infty}}\big)^d$ are of the form $\pi_m\circ \rho_m$, with $\pi_m$ an irreducible representation of $\ca_m$. The spectrum of $\ca_m$ has been investigated in Corollary \ref{multiplicativity}. With the notation of this Corollary, one notices easily that the representations $\xi_j\circ \rho_{m+1}$ and $\xi_j \circ \rho_m$ coincide, and that the representations $\eta_{k,j}\circ \rho_{m+1}$ and $\eta_{k,j}\circ \rho_m$ are the same for $k\leq m-1$. So that the discrete part $\big(\widehat{\ca_{\infty}}\big)^d$ of the spectrum can be described

\begin{itemize}
\item[$(1)$] either as the increasing union $\cup_m (\rho_m)_*\big(\widehat{\ca_m}\big)$

\item[$(2)$] or as the family gathering the $\xi_j\circ \rho_0$ (which are characters of $\ca_{\infty}$) and the $\eta_{m-1,j}\circ \rho_m$, $j=1,2,3$, $m\geq 1$.
\end{itemize}

In any case, $\big(\widehat{\ca_{\infty}}\big)^d$ is a countable set. By Lemma \ref{inter}, it is a dense subset of $\widehat{\ca_{\infty}}$.

\subsection{The continuous spectrum of $\ca_{\infty}$.}\-

Here, the challenge is to show that this continuous part is not empty. It will be a consequence of the following proposition\,:

\begin{prop}
Let $\pi$ be a representation of $\ca_{\infty}$ in some separable $\cb(H)$, and $\displaystyle\pi=\int^\oplus_X \pi_xd\mu(x)$ a disintegration of $\pi$ into a Hilbert integral of irreducible representations.

If $\pi$ is continous, then $\pi_x\in \big(\widehat{\ca_{\infty}}\big)^c$\,,  $x\in X$ $\mu$-a.s.
\end{prop}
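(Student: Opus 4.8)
The plan is to argue by contradiction: I will show that if a set of positive $\mu$-measure consisted of discrete fibres, then $\pi$ and $\rho_\infty=\oplus_m\rho_m$ would share a genuine subrepresentation, producing a nonzero intertwiner and thus contradicting the hypothesis $\pi\perp\rho_\infty$ (continuity).

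First I would set $Y=\{x\in X:\pi_x\in(\widehat{\ca_\infty})^d\}$ and verify it is $\mu$-measurable. By Corollary \ref{corkerrhom} we have $\ker\rho_m=M_3(\bc)^{\otimes m}\otimes\ca_\infty^0$, and a fibre $\pi_x$ is discrete precisely when it factors through some $\rho_m$, i.e. $\pi_x(\ker\rho_m)=0$. Choosing a countable dense set $\{a_k\}\subset\ker\rho_m$ and using that $x\mapsto\|\pi_x(a_k)\|$ is measurable for a measurable field of representations over a standard space, the set $Y^{(m)}=\bigcap_k\{x:\|\pi_x(a_k)\|=0\}$ is measurable, hence so is $Y=\bigcup_m Y^{(m)}$. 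On $Y^{(m)}$ each $\pi_x$ descends to an irreducible representation of the finite-dimensional algebra $\ca_m$, whose spectrum is finite by Corollary \ref{multiplicativity}; splitting according to the finitely many primitive ideals of $\ca_m$—again measurable by the dense-sequence test—partitions $Y$ into countably many measurable pieces $Y_j$ on each of which $\pi_x$ is a fixed discrete irreducible $\sigma_j\in(\widehat{\ca_\infty})^d$.

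Suppose $\mu(Y)>0$; then $\mu(Y_j)>0$ for some $j$. Since all fibres over $Y_j$ are unitarily equivalent to the same finite-dimensional irreducible $\sigma_j$, a measurable selection of intertwiners trivialises the restricted field and gives $\int^\oplus_{Y_j}\pi_x\,d\mu\cong\sigma_j\otimes 1_{L^2(Y_j,\mu)}$, a nonzero multiple of $\sigma_j$ occurring as a subrepresentation of $\pi$. On the other hand $\sigma_j$ is discrete, so by Theorem \ref{thm:decomposition} it is contained in a multiple of $\rho_\infty$; being irreducible, Schur's lemma forces it into a single summand $\rho_m$. Writing $\sigma_j=\pi_m\circ\rho_m$ with $\pi_m$ irreducible on $\ca_m$, and observing that $\ca_m\hookrightarrow M_3(\bc)^{\otimes(m+1)}$ is a faithful representation of a finite-dimensional $C^*$-algebra and therefore contains every irreducible of $\ca_m$, I conclude that $\sigma_j$ is a genuine subrepresentation of $\rho_m\le\rho_\infty$.

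Finally I would extract the contradiction: letting $W_1:H_{\sigma_j}\to H_\pi$ and $W_2:H_{\sigma_j}\to H_{\rho_\infty}$ be the two isometric intertwiners just produced, the operator $W_2W_1^*:H_\pi\to H_{\rho_\infty}$ intertwines $\pi$ and $\rho_\infty$ and is nonzero (since $W_2W_1^*W_1=W_2$), contradicting $\pi\perp\rho_\infty$. Hence $\mu(Y)=0$, i.e. $\pi_x\in(\widehat{\ca_\infty})^c$ for $\mu$-a.e. $x$. I expect the main obstacle to be the measurable bookkeeping of the first two steps—showing that $Y$ and its decomposition into the pieces $Y_j$ are measurable, and that the direct integral restricted to a positive-measure $Y_j$ is honestly a multiple of the single irreducible $\sigma_j$; once the field is organised this way, the representation-theoretic steps (Schur, faithfulness yielding all irreducibles of $\ca_m$, and the construction of the intertwiner) are routine.
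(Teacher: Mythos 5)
Your argument is correct and is essentially the paper's own proof: the paper likewise argues by contradiction, using the countability of $\big(\widehat{\ca_{\infty}}\big)^d$ to produce a single class $\pi_m\circ\rho_m$ occurring on a set of positive measure, hence appearing as a subrepresentation of $\pi$, which contradicts $\pi\perp\oplus_m\rho_m$. The measurability bookkeeping, the trivialisation over $Y_j$, and the explicit intertwiner $W_2W_1^*$ that you spell out are precisely the standard steps the paper leaves implicit.
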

\begin{proof}
By contradiction. Let $X^d$ be the set $\{x\in X\,|\,\pi_x\in \big(\widehat{\ca_{\infty}}\big)^d\}$, and suppose $\mu(X^d)\not=0$. As $\big(\widehat{\ca_{\infty}}\big)^d$ is countable, this provides some $m\in \bn$ and $\pi_m\in \widehat{\ca_m}$ such that the set $Y^d_m=\{x\in X\,|\,\pi_x\sim \pi_m\circ \rho_m\}$ has non zero measure. Hence $\pi_m\circ \rho_m$ appears as a subrepresentation of $\pi$.
\end{proof}

\begin{rem}
$\big(\widehat{\ca_{\infty}}\big)^c$ coincides with the set of classes of irrreducible representations of $\UHF(3^\infty)$, the restriction of which to $\ca_{\infty}$ remains irreducible.
\end{rem}

{
Because of  Powers representations in Example \ref{Powers}, and those in Example \ref{JLexRep} $(6)$, we can say that  $\big(\widehat{\ca_{\infty}}\big)^c$ is uncountable\,.
}

\subsection{Primitive ideals of $\ca_{\infty}$.}\label{primid}\-

Primitive ideals are the kernels of irreducible representations. For $\ca_{\infty}$, the list is easy to write\,: there is $\{0\}$ (the common kernel for all $\pi\in \big(\widehat{\ca_{\infty}}\big)^c$) and the countable family $\ker(\pi)$, $\pi \in \big(\widehat{\ca_{\infty}}\big)^d$.

\begin{prop}
The set of primitive ideals of $\ca_{\infty}$ is countable and can be enumerated this way, where $\x$ and $\eta$ were defined in \eqref{xi_and_eta}:
\begin{itemize}
\item[$(1)$] $\{0\}$

\item[$(2)$] $\ker(\xi_j\circ \rho_0)$, $j=1,2,3$

\item[$(3)$] $\ker(\eta_{m-1,j}\circ \rho_m)$, $j=1,2,3$, $m\geq 1$.
\end{itemize}
\end{prop}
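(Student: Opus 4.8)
The plan is to exploit that primitive ideals are exactly the kernels of irreducible representations, and to feed this into the dichotomy already established: by Theorem~\ref{thm:decomposition} together with Subsection~\ref{spettrodiscreto}, every irreducible representation of $\ca_\infty$ is either \emph{discrete} (equivalent to some $\pi_m\circ\rho_m$ with $\pi_m$ irreducible on $\ca_m$) or \emph{continuous} (orthogonal to $\rho_\infty=\oplus_m\rho_m$). I would therefore compute the kernels arising from each of the two families and then verify that the resulting list is exhaustive and has no repetitions.

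For the continuous family, Proposition~\ref{disjunction}$(1)$ shows that any continuous irreducible representation extends to a representation of $\UHF(3^\infty)$ and is in particular faithful, so its kernel is $\{0\}$. Such representations do exist --- for instance the Powers representations of Example~\ref{Powers} --- so $\{0\}$ is genuinely primitive, and since all continuous irreducibles share this kernel they account for exactly item $(1)$.

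For the discrete family, the description of $(\widehat{\ca_\infty})^d$ in Subsection~\ref{spettrodiscreto} identifies every discrete irreducible representation (up to equivalence) with one of the characters $\xi_j\circ\rho_0$ or one of the $\eta_{m-1,j}\circ\rho_m$, $j=1,2,3$, $m\geq1$; their kernels are precisely the ideals in items $(2)$ and $(3)$. Because each such representation is finite dimensional, its image is automatically a full matrix algebra (the commutant of an irreducible being scalar), so the quotient of $\ca_\infty$ by the kernel is a simple finite-dimensional algebra with a unique irreducible representation up to equivalence; hence inequivalent discrete classes have distinct kernels and the list in $(2)$--$(3)$ is without repetition.

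The main thing requiring care --- and essentially the only substantive point, the rest being bookkeeping on top of the earlier classification --- is to assemble completeness and non-redundancy. Completeness is immediate: a primitive ideal is $\ker\pi$ for some irreducible $\pi$, which is either continuous (giving $\{0\}$) or discrete (giving a kernel in $(2)$--$(3)$). Non-redundancy across the two families follows since each discrete representation factors through the finite-dimensional quotient $\rho_m$ and so has nonzero kernel --- indeed $\ker\rho_m=M_3(\bc)^{\otimes m}\otimes\ca_\infty^0\neq\{0\}$ by Proposition~\ref{kerrhom} --- so no discrete kernel equals $\{0\}$. Finally, countability of the list is exactly the countability of $(\widehat{\ca_\infty})^d$ recorded in Subsection~\ref{spettrodiscreto}, which in turn rests on the stabilizations $\xi_j\circ\rho_{m+1}=\xi_j\circ\rho_m$ and $\eta_{k,j}\circ\rho_{m+1}=\eta_{k,j}\circ\rho_m$ for $k\leq m-1$.
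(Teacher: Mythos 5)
Your proof follows essentially the same route as the paper: primitive ideals are kernels of irreducible representations, the discrete/continuous dichotomy (Theorem \ref{thm:decomposition} and Subsection \ref{spettrodiscreto}) reduces everything to the two families, faithfulness of continuous irreducibles (Proposition \ref{disjunction}) accounts for $\{0\}$, and your observation that inequivalent finite-dimensional irreducibles have distinct kernels correctly fills in the non-redundancy the paper leaves implicit. One correction: the Powers representations of Example \ref{Powers} are continuous but not irreducible --- for the trace, $\pi_\tau(\ca_\infty)''$ is a $\mathrm{II}_1$ factor, not all of $\cb(H)$ --- so to witness that $\{0\}$ is actually primitive you should instead invoke the pure product states of Example \ref{JLexRep} (items $(1)$, $(2)$, $(6)$), whose GNS representations restrict irreducibly to $\ca_\infty$, or disintegrate a continuous representation into irreducibles via the proposition in the continuous-spectrum subsection.
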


\begin{rem}
The ideals of $\ca_{\infty}$ are obtained as all possible intersections of the primitive ideals above. The non-trivial ideals of $\ca_\infty$ are of the form
$\ker(\pi)$, $\pi\subset \oplus_m\rho_m$.

\end{rem}

\subsection{Tracial states on $\ca_{\infty}$}\label{traces}\-

With the notations in \eqref{xi_and_eta}, one can identify a family of obvious traces on $\ca_{\infty}$:

\begin{itemize}
\item[$(1)$] the trace $\tau_\infty$, restriction to $\ca_{\infty}$ of the natural and unique trace $\tau_3^{\otimes \bn}$ on $\UHF(3^\infty)$;

\item[$(2)$] the three characters $\chi_j=\xi_j\circ \rho_0$, $j=1,2,3$\,;

\item[$(3)$] the traces $\tau_{m,j}=\tau_3^{\otimes m-1}\circ \eta_{m-1,j}\circ \rho_m$ , $j=1,2,3$, $m\geq 1$\,.
\end{itemize}

As a matter of fact, they appear exactly as the set of extremal tracial states on $\ca_{\infty}$.

\begin{prop}  \label{JL_traces}
Any tracial state $\tau$ on $\ca_{\infty}$ can be written uniquely as
$$
\tau=\lambda_\infty \tau_\infty +\sum_j \lambda_j\chi_j+\sum_{j,m} \lambda_{m,j}\tau_{m,j}
$$
with $\lambda_\infty,\,\lambda_j,\,\lambda_{m,j}\geq 0$, $\lambda_\infty+\sum_j \lambda_j+\sum_{j,m}\lambda_{j,m}=1$.
\end{prop}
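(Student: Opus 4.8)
The plan is to read off the decomposition from the representation theory developed above, applied to the GNS representation of a given trace, using that $\ca_\infty$ is unital and separable so that its tracial states form a Choquet simplex. First I would check that the listed functionals really are tracial states. Indeed $\tau_\infty$ is tracial, being the restriction of the tracial state $\tau_3^{\otimes\bn}$; each $\chi_j=\xi_j\circ\rho_0$ is a character by Corollary \ref{multiplicativity}; and each $\tau_{m,j}=\tau_3^{\otimes m-1}\circ\eta_{m-1,j}\circ\rho_m$ is the normalized trace of $M_3(\bc)^{\otimes m-1}$ precomposed with the $*$-homomorphism $\eta_{m-1,j}\circ\rho_m$ (again Corollary \ref{multiplicativity}), hence tracial. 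All are normalized since the maps involved are unital.

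Now I would fix a tracial state $\tau$ with GNS triple $(\pi_\tau,H_\tau,\Omega_\tau)$, so that $M:=\pi_\tau(\ca_\infty)''$ carries the normal tracial state $\tr_M=\langle\,\cdot\,\Omega_\tau,\Omega_\tau\rangle$ and $\tau=\tr_M\circ\pi_\tau$. By Theorem \ref{thm:decomposition} one may split $\pi_\tau=\pi^d\oplus\pi^c$, with $\pi^d$ a direct sum of finite dimensional irreducibles and $\pi^c\perp\rho_\infty$. Since $\pi^d$ and $\pi^c$ are disjoint, the projection $z\in M$ onto the space of $\pi^c$ is central, whence $\tr_M=\tr_M(z\,\cdot)+\tr_M(z^\perp\,\cdot)$ and correspondingly $\tau=\tau^c+\tau^d$, with $\tau^c,\tau^d$ positive tracial functionals. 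It remains to identify the two summands.

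For the continuous part I would invoke Proposition \ref{disjunction}$(1)$, valid because $\pi^c$ is orthogonal to every $\rho_m$: it extends to $\overline{\pi^c}$ on $\UHF(3^\infty)$ with $N:=\pi^c(\ca_\infty)''=\overline{\pi^c}(\UHF(3^\infty))''$. The restriction of $\tr_M$ to the corner $N=zMz$ is a normal trace $\tr_N$, so $\tr_N\circ\overline{\pi^c}$ is a positive tracial functional on $\UHF(3^\infty)$; by uniqueness of the trace on the UHF algebra it equals $\lambda_\infty\,\tau_3^{\otimes\bn}$, where $\lambda_\infty=\tr_M(z)\ge 0$. Restricting to $\ca_\infty$ gives $\tau^c=\lambda_\infty\,\tau_\infty$. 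This is the step I expect to be the main obstacle: one must genuinely rule out any continuous extremal trace other than $\tau_\infty$, and this is exactly where the extension property of Proposition \ref{disjunction} together with the uniqueness of the UHF trace are indispensable (the latter, combined with normality and weak density of $\overline{\pi^c}(\UHF(3^\infty))$ in $N$, is what pins $\tau^c$ down on all of $\ca_\infty$).

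For the discrete part, $\pi^d$ is a direct sum of finite dimensional irreducibles drawn from the \emph{countable} set $\big(\widehat{\ca_\infty}\big)^d$ (Section \ref{spettrodiscreto}), so $z^\perp M\cong\bigoplus_\sigma \cb(H_\sigma)$ is an at most countable direct sum of full matrix algebras. A normal trace on it is a countable nonnegative combination of the block traces; composing with $\pi^d$ and recognizing each summand $\sigma$ as either a character $\chi_j$ or some $\eta_{m-1,j}\circ\rho_m$, whose normalized trace is precisely $\tau_{m,j}$, yields $\tau^d=\sum_j\lambda_j\chi_j+\sum_{m,j}\lambda_{m,j}\tau_{m,j}$ with nonnegative coefficients. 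It is the countability of the discrete spectrum that makes this an honest series rather than an integral. Adding $\tau^c$ gives the asserted formula, and evaluating at the identity forces $\lambda_\infty+\sum_j\lambda_j+\sum_{m,j}\lambda_{m,j}=1$. Finally, uniqueness follows because the extremal traces $\tau_\infty,\chi_j,\tau_{m,j}$ are pairwise disjoint, hence linearly independent: $\tau_\infty$ is the unique faithful one, while each $\chi_j$ and each $\tau_{m,j}$ is detected by a suitable element of $\ca_\infty$ (a lifted minimal projection of the corresponding block of $\ca_m$) on which all the remaining traces vanish; equivalently, the barycentric representation in the trace simplex is unique.
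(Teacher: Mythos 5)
Your proof follows essentially the same route as the paper's: decompose the GNS representation via Theorem \ref{thm:decomposition}, split $\tau=\tau^c+\tau^d$ accordingly, identify $\tau^c$ as a multiple of $\tau_\infty$ through the UHF extension of Proposition \ref{disjunction} together with uniqueness of the trace on $\UHF(3^\infty)$, and expand $\tau^d$ over the countable discrete spectrum as a series of block traces proportional to the $\chi_j$ and $\tau_{m,j}$. The only difference is that you also argue the uniqueness of the coefficients (via mutual disjointness of the extremal traces), a point the paper asserts in the statement but leaves implicit in its proof.
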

\begin{proof}
The representation $\pi_\tau$ (with cyclic vector $\xi_\tau$) decomposes as $\pi_\tau=\pi_\tau^d\oplus \pi_\tau^c$, while $\tau=\tau^d+\tau^c$, with $\tau^d(b)=<\xi_\tau,\pi_\tau^d(b)\xi_\tau>$ and $\tau^c(b)=<\xi_\tau,\pi^c(b)\xi_\tau>$.

$\tau^c$ extends to a finite trace on $\UHF(3^\infty)$, and thus is proportional to $\tau_\infty$.

$\pi^d$ decomposes as a sum of finite dimensional irreducible representations, which corresponds to a decomposition of $\tau^d$ in a sum or a series of tracial positive linear forms, each of them being proportional either to one of the $\chi_j$ or to one of the $\tau_{m,j}$.
\end{proof}


\begin{prop}
The following statements hold:

\item[$(1)$] the centre of $\ca_{\infty}$ is $\bc$,

\item[$(2)$] $C(K)$ is a maximal abelian subalgebra of $\ca_{\infty}$.
\end{prop}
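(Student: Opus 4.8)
The plan is to prove the two statements through the restriction morphisms $\rho_n\colon\ca_\infty\to\ca_n$, relying on two facts already established: that $C(V_n)$ is maximal abelian in $\ca_n$ (Corollary \ref{MASA1}), and that the tracial (Powers) representation $\pi_\tau$ of Example \ref{Powers} is faithful with $\pi_\tau(\ca_\infty)''=\pi_{\tau_\infty}(\UHF(3^\infty))''$. I would prove $(2)$ first and then dispatch $(1)$ independently by a von Neumann algebra argument.

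For $(2)$, since $C(K)$ is abelian it suffices to show that its relative commutant in $\ca_\infty$ is contained in $C(K)$. So let $b\in\ca_\infty$ commute with every element of $C(K)$. The first point is that $\rho_n$ restricts on $C(K)\subseteq(\bc^3)^{\otimes\infty}$ to the classical restriction map $C(K)\to\bc(V_n)=C(V_n)$, which is surjective: indeed, for $f\in C(K)$ written as $f=\lim_m\f_m(f|_{V_m})$, one checks $\rho_n(\f_m(f|_{V_m}))=\f_n(f|_{V_n})$ stabilizes for $m\geq n$, so $\rho_n(f)=\f_n(f|_{V_n})$, and every function on the finite set $V_n$ is such a restriction (e.g. via the affine extension $\l_{[n,\infty)}$ of Proposition \ref{EA2}, which lands in $C(K)$ and satisfies $\rho_n\circ\l_{[n,\infty)}=id$). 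Since $\rho_n$ is a morphism, $\rho_n(b)$ commutes with $\rho_n(C(K))=C(V_n)$, and by Corollary \ref{MASA1} this forces $\rho_n(b)\in C(V_n)$ for every $n$. Finally $b=\lim_n\rho_n(b)$ by Lemma \ref{omegan} (with $\omega=e^*_{22}$), each $\rho_n(b)\in C(V_n)\subseteq(\bc^3)^{\otimes\infty}$ is of the form $\f_n(a_n)$ with $a_n\in\bc(V_n)$, so Theorem \ref{Kinfinity} yields $b\in C(K)$. Hence $C(K)'\cap\ca_\infty=C(K)$, which is maximality.

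For $(1)$ the cleanest route is the faithful continuous representation. A central element $z\in\ca_\infty$ maps under $\pi_\tau$ to an element of $\pi_\tau(\ca_\infty)$ commuting with all of $\pi_\tau(\ca_\infty)$, hence lying in the centre of the von Neumann algebra $M:=\pi_\tau(\ca_\infty)''$. By Example \ref{Powers}, $M=\pi_{\tau_\infty}(\UHF(3^\infty))''$, the GNS closure of the unique trace on the simple algebra $\UHF(3^\infty)$, namely the hyperfinite $II_1$ factor; thus $Z(M)=\bc I$ and $\pi_\tau(z)=\lambda I$. Since continuous representations are faithful (Proposition \ref{disjunction}), we get $z=\lambda 1$, so the centre is $\bc$. (Alternatively, $(2)$ already gives $Z(\ca_\infty)\subseteq C(K)$, after which one would still need to exclude nonconstant central functions; the factor argument avoids this.)

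The algebraic bookkeeping (commutation with $C(V_n)$, the identification of $\rho_n|_{C(K)}$ with classical restriction) is routine. The one genuine point in $(2)$ is the final step: one must use Theorem \ref{Kinfinity} to conclude that the norm-limit of the $\rho_n(b)\in C(V_n)$ returns to $C(K)$ and not merely to $\ca_\infty$, which is exactly where surjectivity of $\rho_n$ onto $C(V_n)$ and the closedness of $(\bc^3)^{\otimes\infty}$ intervene. For $(1)$ the only nontrivial input is that $\pi_\tau(\ca_\infty)''$ is a factor, resting on the uniqueness of the trace on the UHF algebra.
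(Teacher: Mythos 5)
Your proposal is correct and follows essentially the same route as the paper: part $(2)$ via the restriction morphisms $\rho_n$, Corollary \ref{MASA1}, the convergence $b=\lim_n\rho_n(b)$ and Theorem \ref{Kinfinity}, and part $(1)$ via the tracial representation $\pi_\tau$ of Example \ref{Powers}, whose weak closure equals $\pi_{\tau_\infty}(\UHF(3^\infty))''$, a factor, together with faithfulness from Proposition \ref{disjunction}. Your write-up merely makes explicit two points the paper leaves implicit (surjectivity of $\rho_n|_{C(K)}$ onto $C(V_n)$, and faithfulness of $\pi_\tau$), which is a welcome but not a substantive difference.
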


\begin{proof}
$(1)$ If $a$ is an element in the center of $\ca_\infty$, $\pi_\tau(a)$ is an element of the UHF which commutes with $\pi_\tau(\ca_\infty)''=\pi_\tau(UHF(3^\infty))''$, hence an element in the center of $\pi_\tau(UHF(3^\infty))''$ which is a factor, hence a multiple of the unit.

\item[$(2)$] Suppose $b\in\ca_\infty\cap C(K)'$. For any $n\in\bn$,   $\rho_n(b)\in\ca_n\cap C(V_n)'$ by the properties of $\rho_n$. Proposition \ref{MASA1} then imply $\rho_n(b)\in C(V_n)$, hence  $b=\lim_n \rho_n(b)\in C(K)$.
\end{proof}

\begin{ex} [Representations of $\ca_\infty$] \label{JLexRep}

\item[$(1)$] Let $\omega$ be the state $(e_{11}^*)^{\otimes \bn}$ on $UHF(3^\infty)$. Clearly $\o$ is a pure state, and the representation $\pi_{\omega}$ of $\UHF(3^\infty)$ is irreducible. The restriction of $\omega$ to $C(K)$ is the Dirac measure of $11111\cdots$, which does not belong to $V_\infty$. Then, by Propositions \ref{disjunction} and \ref{pi-ort}, $\pi_\omega(\ca_\infty)''=\pi_\omega(UHF)''=\cb(H_\omega)$\,. Therefore $\omega_{|\ca_{\infty}}$ is a pure state, belonging to the continuous series.

\item[$(2)$] If $v=P(w)$ belongs to $K\backslash V_\infty$, then the restriction of the state $\omega=\otimes_n e_{w_nw_n}^*$ to $\ca_{\infty}$ is pure, and belongs to the continuous series. Its restriction to $C(K)$ is the Dirac measure $\delta_v$.

\item[$(3)$] $\omega=\big(\frac{1}{3}\sum_{1\leq i,j\leq 3}e_{ij}^*\big)^{\otimes \bn}$ is a pure state on $\UHF(3^\infty)$, and its restriction to $C(\Sigma_\infty)$ is the equidistributed Bernoulli measure, while its restriction to $C(K)$ is the symmetric self-similar measure (which is a diffuse measure). In examples $(1)$-$(3)$ we have $\pi_\omega(\ca_{\infty})''=\pi_\omega(\UHF(3^\infty))''=\cb(H_\omega)$.

\item[$(4)$] $\omega=(e_{22}^*)^{\otimes n}$ is a pure state on $\UHF(3^\infty)$, and its restriction to $\ca_{\infty}$ is the character $\chi_2$ of Proposition \ref{JL_traces}. Therefore, the representation $\pi_\omega$ of $\UHF(3^\infty)$ is infinite dimensional, and its restriction to $\ca_{\infty}$ contains a one-dimensional representation, that is $\pi_\omega |_{\ca_{\infty}}$ is not irreducible.

\item[$(5)$] The state $\omega_0=\frac{1}{2}\big( e_{11}^*+e_{13}^*+e_{31}^*+e_{33}^*\big)$ is pure on $M_3(\bc)$. Therefore
$$
\omega=\frac{1}{2}\big( e_{11}^*+e_{13}^*+e_{31}^*+e_{33}^*\big) \otimes \big(e_{22}^*\big)^{\otimes \bn^*}
$$
is a pure state of the $\UHF(3^\infty)$. Its restriction to $\ca_{\infty}$ factorizes through $\rho_0$, and it is equal to $\omega_0\circ \rho_0$. Since $\rho_0$ sends $\ca_{\infty}$ onto $\bc(V_0)=\bc^3$, and the restriction of $\omega_0$ to $\bc^3$ is the semi sum of two Dirac measures, we get
$$
\omega_{|\ca_{\infty}}=\frac{1}{2}\big(\delta_1+\delta_3\big)\circ \rho_0=\frac{1}{2}\big(\chi_1+\chi_3\big) \,,
$$
which is not a pure state.

\item[$(6)$] Any diagonal matrix $(\mu_i \delta_{ij})_{ij=1\cdots 3}$  $\mu_i\geq 0$, $\mu_1+\mu_2+\mu_3=1$, can be considered a probability measure on $\{1,2,3\}$, and can be extended to a pure state on $M_3(\bc)$, represented by the matrix $\big(\sqrt{\mu_i\mu_j}\big)$. If $(\mu^{(k)})_{k\in \bn}$ is a family of probability measures on $\{1,2,3\}$, and each of them is extended, as above, to a pure state $\omega^{(k)}$ on $M_3(\bc)$, and if the measure $\mu=\otimes_k\mu_k$ on $C(\Sigma_\infty)$ is diffuse, then $\mu$ extends to a pure state $\omega=\otimes_k\omega^{(k)}$ on $\UHF(3^\infty)$, and its restriction to $\ca_{\infty}$ is still a pure state, in the continuous series. Therefore, the continuous part of the spectrum of $\ca_\infty$ contains uncountably many elements.

\end{ex}


%
%
%
%
%
%
%
%

\section{The Dirichlet form}

\subsection{The classical case}

From the classical point of view, the construction of the Dirichlet form goes as follows. Given a quadratic form $\ce_0$ on $\bc(V_0)$, one may extend it by self-similarity to $\bc(V_1)$ as $\ce_1(f)=\sum_{j=1,2,3}\ce_0[f\circ w_j]$, and then project it back to $\bc(V_0)$ as $g\in\bc(V_0)\to\min\{\ce_1(f):f\in\bc(V_1),f|_{V_0}=g\}$. If such form on $\bc(V_0)$ is proportional to the original one, $\ce_0$ is called an eigenform, and the function $f$ realising the minimum is called the harmonic extension of $f$.
Note that one may keep extending  the energy by self-similarity step by step, via
\begin{equation}\label{sesiclas}
\ce_{n+1}[f]=\sum_{j=1,2,3}\ce_n[f\circ w_j],\quad f\in\bc(V_{n+1}),
\end{equation}
finally obtaining $\ce_n[f]=\sum_{\s\in\Sigma_n}\ce_0[ f\circ w_\s]$.

Given an eigenform, its extension by self-similarity to $\bc(V_n)$, and  the notion of harmonic extension of a function, one gets a closed Dirichlet form on a suitable dense sub-algebra of $C(K)$.

As is known, the symmetric eigenform for the gasket is $\ce_0[f]=\sum_{i\ne j}|f(v_i)-f(v_j)|^2$.
We now express the $n$-th combinatorial energy in a more algebraic way. First we associate with any $f\in\bc(V_n)$ an element $a_f\in\bc(V_0, M_3(\bc)^{\otimes n})$ setting $(a_f)_{\s\t}=\d_{\s\t}\cdot f\circ w_\s$.
Then,
$$
\tr(|a_f(v_i)-a_f(v_j)|^2)
=\sum_{\s,\t\in\Sigma_n}|(a_f)_{\s\t}(v_i)-(a_f)_{\s\t}(v_j)|^2
=\sum_{\s\in\Sigma_n}|f\circ w_\s(v_i)-f\circ w_\s(v_j)|^2.
$$
As a consequence,
the $n$-th combinatorial energy satisfies the following equation
\begin{align}
\ce_n[f]
&= \sum_{\s\in\Sigma_n}\ce_0[f\circ w_\s]
= \sum_{\s\in\Sigma_n}\sum_{i\ne j=1,2,3}|f\circ w_\s(v_i)-f\circ w_\s(v_j)|^2\label{tren}\\
&=\sum_{i\ne j=1,2,3}\sum_{\s\in\Sigma_n}|f\circ w_\s(v_i)-f\circ w_\s(v_j)|^2
= \sum_{i\ne j=1,2,3}\tr|a_f(v_i)-a_f(v_j)|^2.\notag
\end{align}
Equation \eqref{sesiclas} may also be reformulated in more algebraic terms.
Let us consider the linear map $e_{ij}^*\otimes id_n:M_3(\bc)^{\otimes (n+1)}
\to M_3(\bc)^{\otimes n}$. When $m\in M_3(\bc)^{\otimes (n+1)}$, $\a,\b\in\Sigma_n=\{1,2,3\}^{\times\, n}$, $\s=i\cdot\a$, $\t=j\cdot \b$, it satisfies
$$
\big((e_{ij}^*\otimes id_n)m\big)_{\a\b}=m_{\s\t}.
$$
Then, with the same symbols, for $f\in\bc(V_{n+1})$, we get
\begin{align*}
\big( (e_{ij}^*\otimes id_n)\circ a_f\big)_{\a\b}
&=(a_f)_{\s\t}
=\d_{\s\t}\cdot f\circ w_\s
=\d_{\a\b}\d_{ij}\cdot f\circ w_i\circ w_\a
=\d_{ij}\cdot\big(a_{f\circ w_i}\big)_{\a\b}.
\end{align*}
As a consequence,
\begin{align*}
\ce_n[f\circ w_i]
&=\sum_{p\ne q}\tr |a_{f\circ w_i}(v_q)-a_{f\circ w_i}(v_p)|^2\\
&=\sum_{j=1,2,3}\sum_{p\ne q}\sum_{\a\b}
|\d_{ij}\cdot(a_{f\circ w_i})_{\a\b}(v_p)-\d_{ij}\cdot(a_{f\circ w_i})_{\a\b}(v_q)|^2\\
&=\sum_{j=1,2,3}\sum_{p\ne q}\sum_{\a\b}
|\big( ( e_{ij}^*\otimes id_n)a_f\big)_{\a\b}(v_p)
-\big( ( e_{ij}^*\otimes id_n)a_f\big)_{\a\b}(v_q)|^2\\
&=\sum_{j=1,2,3}\sum_{p\ne q}
\tr|( e_{ij}^*\otimes id_n)a_f(v_p)-(e_{ij}^*\otimes id_n)a_f(v_q)|^2.
\end{align*}
It follows that the self-similarity equation \eqref{sesiclas} takes the form
\begin{equation}\label{sesiquan}
\sum_{p\ne q}\tr|a_f(v_p)-a_f(v_q)|^2=
\sum_{i,j=1,2,3}\sum_{p\ne q}
\tr|( e_{ij}^*\otimes id_n)a_f(v_p)-(e_{ij}^*\otimes id_n)a_f(v_q)|^2.
\end{equation}

%

\subsubsection{The harmonic extension}
The harmonic extension $\f$ from $\bc(V_0)$ to $\bc(V_1)$  relative to the energy $\ce_0$ is given by the symmetric extension with parameter $3/5$ described in \eqref{t-ext},
\begin{equation} \label{pizero}
\f(\a^0_j)=\a^1_j+\frac25\b^1_{j}+\frac25\b^1_{j+1}+\frac15\b^1_{j-1} .
\end{equation}
Then the harmonic extension from $\bc(V_n)$ to $\bc(V_{n+1})$ can be written as the restriction to $\bc(V_n)$ of the completely positive contraction
$$
\f_n=id_n\otimes \f:  (\bc^3 )^{\otimes(n+1)}=(\bc^3 )^{\otimes n}\otimes \bc(V_0) \longrightarrow (\bc^3 )^{\otimes n}\otimes \bc(V_1)\subset (\bc^3 )^{\otimes(n+2)}\,,
$$
The eigenform property is expressed, for $f\in\bc(V_n)$, by
\begin{equation}\label{selfsimform}
\ce_{n+1}[\f_n(f)]=\frac35\ce_n[f].
\end{equation}
By the minimizing property of the harmonic extension, for any $f\in C(K)$ the sequence $\left(\frac53\right)^n\ce_n[\rho_n(f)]$ is non decreasing, so that there exists 
$\ce[f]=\lim_n\left(\frac53\right)^n\ce_n[\rho_n(f)]$.

As above, harmonic extensions may be composed, so as to associate with any element $f\in\bc(V_{n})$ an element $\f_{[n,m]}(f)\in \bc(V_m)$, $m>n$, and also an element $\f_{[n,\infty]}(f)\in C(K)$, and, by the extension property, for $m>n$,
\begin{equation}\label{extprop}
\r_m\circ\f_{[n,\infty]}(f)=\r_m\circ\f_{[m,\infty]}(f)\circ\f_{[n,m]}(f)=\f_{[n,m]}(f).
\end{equation}
Then \eqref{selfsimform} and \eqref{extprop} give
$$
\left(\frac53\right)^m\ce_m[\rho_m\circ\f_{[n,\infty]}(f)]
=\left(\frac53\right)^m\ce_m[\f_{[n,m]}(f)]
=\left(\frac53\right)^{n}\ce_{n}[f],\qquad f\in\bc(V_n),\quad m\geq n,
$$
namely the sequence defining $\ce[\f_{[n,\infty]}(f)]$ is eventually constant, so that $\f_{[n,\infty]}(f)$ has finite energy. Such elements, for $n\in\bn$ form a dense subset of $C(K)$, therefore $\ce$ is a closed densely defined  Dirichlet form on $C(K)$.


\subsection{The non commutative case: the harmonic structure}\-


%
The Dirichlet form $\ce_n$ on $\ca_n$ will be the natural amplification of the form $\ce_0$ on $\ca_0=\bc(V_0)$:
\begin{dfn}\label{formaEn}
The  Dirichlet form $\ce_n$ on $\ca_n \subset M_3(\bc)^{\otimes n}\otimes \bc(V_0)$ is defined as
\begin{equation*}
\ce_n[b]
=\sum_{\a,\b\in \Sigma_n} \ce_0[b_{\alpha,\beta}]
=\sum_{\a,\b\in \Sigma_n} \sum_{i\ne j} |b_{\a\b}(v_i)-b_{\a\b}(v_j)|^2
=\sum_{i\ne j} \tr |b(v_i)-b(v_j)|^2\,.
\end{equation*}
\end{dfn}
It is not difficult to show that the definition above imply a self-similarity equation analogous to that of \eqref{sesiquan}.
\begin{prop}[Self-similarity]\label{autosim}
$$\ce_{n+1}[b]=\sum_{i,j=1..3} \ce_n\big[(e_{ij}^*\otimes id_n)\,b\big]\,,\;b\in \ca_{n+1}\,.$$
\end{prop}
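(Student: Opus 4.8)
The plan is to reduce the claimed self-similarity to a single pointwise identity in $M_3(\bc)^{\otimes(n+1)}$ and then to the elementary observation that the trace is the Hilbert--Schmidt square of the matrix entries. First I would recall that the quadratic form in Definition \ref{formaEn} is given by the formula $\ce_m[b]=\sum_{i\ne j}\tr|b(v_i)-b(v_j)|^2$, which is manifestly defined for every $b\in M_3(\bc)^{\otimes m}\otimes\bc(V_0)\cong\bc(V_0,M_3(\bc)^{\otimes m})$, not merely on $\ca_m$; in particular it makes sense to evaluate $\ce_n$ on $(e_{ij}^*\otimes id_n)b$ regardless of whether this element lies in $\ca_n$. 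Since $e_{ij}^*\otimes id_n$ acts only on the matricial tensor legs and leaves the $\bc(V_0)$ factor untouched, it commutes with evaluation at the vertices, i.e. $\big((e_{ij}^*\otimes id_n)b\big)(v_p)=(e_{ij}^*\otimes id_n)\big(b(v_p)\big)$ for $p=1,2,3$.

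Consequently everything follows from the linear-algebra identity
\[
\tr_{n+1}|X|^2=\sum_{i,j=1}^3\tr_n\big|(e_{ij}^*\otimes id_n)X\big|^2,\qquad X\in M_3(\bc)^{\otimes(n+1)},
\]
applied with $X=b(v_p)-b(v_q)$ and summed over the pairs $p\ne q$. To prove this identity I would write each trace as a sum over matrix entries, $\tr_{n+1}|X|^2=\sum_{\s,\t\in\Sigma_{n+1}}|X_{\s\t}|^2$, and recall from the classical computation preceding \eqref{sesiquan} that $\big((e_{ij}^*\otimes id_n)X\big)_{\a\b}=X_{(i\a)(j\b)}$ for $\a,\b\in\Sigma_n$. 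Then
\[
\sum_{i,j}\tr_n\big|(e_{ij}^*\otimes id_n)X\big|^2=\sum_{i,j}\sum_{\a,\b\in\Sigma_n}|X_{(i\a)(j\b)}|^2=\sum_{\s,\t\in\Sigma_{n+1}}|X_{\s\t}|^2=\tr_{n+1}|X|^2,
\]
where the middle equality is nothing but the bijection $\{1,2,3\}\times\Sigma_n\to\Sigma_{n+1}$, $(i,\a)\mapsto i\a$, which simultaneously splits the row and the column multi-indices.

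Assembling the pieces, I would substitute $X=b(v_p)-b(v_q)$, use the commutation of $e_{ij}^*\otimes id_n$ with evaluation, and sum over $p\ne q$:
\[
\ce_{n+1}[b]=\sum_{p\ne q}\tr_{n+1}|b(v_p)-b(v_q)|^2=\sum_{i,j}\sum_{p\ne q}\tr_n\big|\big((e_{ij}^*\otimes id_n)b\big)(v_p)-\big((e_{ij}^*\otimes id_n)b\big)(v_q)\big|^2=\sum_{i,j}\ce_n\big[(e_{ij}^*\otimes id_n)b\big].
\]
There is no serious obstacle here: the whole content is the reindexing of tensor legs already exploited in the classical case, and the only points requiring a word of care are that the defining formula for $\ce_n$ extends to the ambient algebra $M_3(\bc)^{\otimes n}\otimes\bc(V_0)$ (so that the right-hand side is legitimate) and that $e_{ij}^*\otimes id_n$ indeed commutes with pointwise evaluation on $V_0$. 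Both are immediate from the tensor-product structure, so the proposition is essentially a bookkeeping consequence of Definition \ref{formaEn}.
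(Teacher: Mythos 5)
Your proof is correct and takes essentially the approach the paper intends: the statement is exactly the matricial identity worked out before \eqref{sesiquan} in the classical subsection, and your entrywise Hilbert--Schmidt computation $\tr|X|^2=\sum_{\s,\t}|X_{\s\t}|^2$ combined with $\big((e_{ij}^*\otimes id_n)X\big)_{\a\b}=X_{(i\a)(j\b)}$ and the splitting $\Sigma_{n+1}\cong\{1,2,3\}\times\Sigma_n$ is precisely the bookkeeping the paper leaves implicit when it says the self-similarity ``is not difficult to show.'' Your precaution that the trace formula for $\ce_n$ makes sense on all of $M_3(\bc)^{\otimes n}\otimes\bc(V_0)$ is well taken (in fact one can check on the generators of Lemma \ref{indicesBis} that $(e_{ij}^*\otimes id_n)b$ already lies in $\ca_n$, but your argument does not need this).
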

Conversely, the self-similarity relation above recovers, by induction,  the whole sequence $\ce_n$ from its starting point $\ce_0$. We now prove that such sequence of forms provides a harmonic structure.
\begin{prop}\label{arm}{\bf Harmonic structure}
\item[$(1)$] For any $b\in \ca_n$,
$\displaystyle\inf\big\{\ce_{n+1}[a]:a\in\ca_{n+1},\rho_n(a)=b\big\}=\frac35\ce_n[b]$.
\item[$(2)$] The lower bound is reached on one and only one element $(id_n\otimes \f)(b)$ of $\ca_{n+1}$, where $\f$ is the harmonic extension from $\bc(V_0)$ to $\bc(V_1)$, see \eqref{pizero}.
\end{prop}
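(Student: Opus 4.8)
The plan is to argue by induction on $n$, exploiting the self-similarity of Proposition \ref{autosim} to reduce level $n+1$ to level $n$, with the classical harmonic extension serving as the base case $n=0$.

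\emph{Base case.} Since $\ca_0=\bc(V_0)$ and $\ca_1=\bc(V_1)$, the problem $\inf\{\ce_1[a]:a\in\ca_1,\ \rho_0(a)=b\}$ is precisely the classical least-energy extension of a function from $V_0$ to $V_1$. The value $\frac35\ce_0[b]$ and the uniqueness of the minimizer $\f(b)$ are Kigami's eigenform computation; equivalently they are encoded in the identity \eqref{selfsimform} together with the minimizing property defining \eqref{pizero}. I would either cite this or verify it directly as a strictly convex finite-dimensional quadratic minimization whose Euler equations produce the ratio $3/5$.

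\emph{Inductive step.} Fix $b\in\ca_n$ and view an arbitrary $a\in\ca_{n+1}\subset M_3(\bc)\otimes\ca_n$ through its first-factor entries $a_{ij}:=(e_{ij}^*\otimes id_n)a\in\ca_n$. Proposition \ref{autosim} gives $\ce_{n+1}[a]=\sum_{i,j}\ce_n[a_{ij}]$, and the same identity applied to $b$ gives $\ce_n[b]=\sum_{i,j}\ce_{n-1}[b_{ij}]$ with $b_{ij}:=(e_{ij}^*\otimes id_{n-1})b$. The key point is that $\rho_n$ and $e_{ij}^*\otimes id$ act on disjoint tensor factors (the last and the first) and therefore commute, so $\rho_n(a)=b$ forces $\rho_{n-1}(a_{ij})=b_{ij}$ for all $i,j$. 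The inductive hypothesis then yields $\ce_n[a_{ij}]\ge\frac35\ce_{n-1}[b_{ij}]$, and summing gives $\ce_{n+1}[a]\ge\frac35\ce_n[b]$ for every admissible $a$; this lower bound uses only the individual constraints, so the fact that the $a_{ij}$ are coupled (because $\ca_{n+1}$ is a proper subalgebra of $M_3(\bc)\otimes\ca_n$) does not obstruct it.

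It remains to produce an extension attaining the bound, and to get uniqueness. I claim $(id_n\otimes\f)(b)$ is the minimizer: it lies in $\ca_{n+1}$ by Lemma \ref{itenspi}, it satisfies $\rho_n\big((id_n\otimes\f)(b)\big)=b$ by the extension property (Proposition \ref{EA2}$(3)$), and, since $id_n\otimes\f$ acts on the last factor while $e_{ij}^*$ peels the first, its entries are $(id_{n-1}\otimes\f)(b_{ij})$, i.e. the entrywise harmonic extensions, each attaining $\frac35\ce_{n-1}[b_{ij}]$. Hence equality holds and $(1)$ follows. For $(2)$, equality in $\ce_{n+1}[a]=\frac35\ce_n[b]$ forces equality in each summand, so by the inductive uniqueness every $a_{ij}$ must be the harmonic extension of $b_{ij}$; as $a\mapsto(a_{ij})_{i,j}$ is injective, the minimizer is unique. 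The step I expect to require the most care is precisely the interplay of the two constraints: verifying $\rho_{n-1}(a_{ij})=b_{ij}$ and, above all, recognizing that one must not minimize the coupled entries separately but instead combine the summed lower bound with the single explicit admissible element $(id_n\otimes\f)(b)$.
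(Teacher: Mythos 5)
Your proof is correct, but it is organized differently from the paper's, which argues in one shot with no induction. The paper uses the inclusion $\ca_{n+1}\subset M_3(\bc)^{\otimes n}\otimes\bc(V_1)$ to view an admissible $a$ as a matrix $(a_{\alpha\beta})_{\alpha,\beta\in\Sigma_n}$ of \emph{scalar} functions on $V_1$, observes that $\rho_n(a)=b$ is equivalent to ${a_{\alpha\beta}}|_{V_0}=b_{\alpha\beta}$ for every pair of indices, and then applies Kigami's classical one-step result $\ce_1[a_{\alpha\beta}]\geq\frac35\,\ce_0[b_{\alpha\beta}]$ (equality iff $a_{\alpha\beta}$ is the harmonic extension) entrywise and sums. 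You instead peel a single matrix factor at a time, with Proposition \ref{autosim} as the recursion engine and the classical case as the base; unwinding your induction reproduces exactly the paper's entrywise reduction, so the mechanism is the same, and you correctly isolate the two points the paper also relies on: the lower bound needs only the decoupled constraints $\rho_{n-1}(a_{ij})=b_{ij}$ (so the coupling inside $\ca_{n+1}$ is harmless in that direction), while membership of the coupled minimizer $(id_n\otimes\f)(b)$ in $\ca_{n+1}$ is supplied separately by Lemma \ref{itenspi} with $t=3/5$. What your route buys is modularity: the only structural inputs are the self-similarity identity and the $n=0$ eigenform computation, so the argument transfers to any setting where an \ref{autosim}-type identity plus a one-step classical minimization hold. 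What the paper's route buys is brevity: a single application of Kigami's result, with no nested inductive hypotheses and no need to check that $\rho$ commutes with first-factor slicing at every level. Two small repairs to your write-up: the extension property $\rho_n\circ(id_n\otimes\f)=id_{\ca_n}$ should not be cited from Proposition \ref{EA2}$(3)$, which concerns the infinite iterate $\l_{[n,\infty)}$; it is the one-step identity $\rho_0\circ\f=id_{\bc(V_0)}$, read off directly from \eqref{pizero} since the restriction $id\otimes e_{22}^*$ annihilates the $\b^1_j$ terms, tensored with $id_n$. And in the base case you should note that the classical minimization, usually stated for real-valued functions, extends to complex values by splitting into real and imaginary parts (the harmonic extension has real coefficients) --- a silent step the paper's proof shares.
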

\begin{proof}
We noticed $\ca_{n+1}\subset M_3(\bc)^{\otimes n}\otimes \bc(V_1)$, so that the generic element $a$ of $\ca_{n+1}$ can be identified with an element of $\bc\big(V_1\to M_3(\bc)^{\otimes n}\big)$.

For fixed $b=\ca_n$,
we have $\rho_n(a)=b$ if and only if, for each $\alpha,\beta\in\Sigma_n$, $\rho_0(a_{\alpha\beta})=b_{\alpha,\beta}$. Which means $b_{\alpha\beta}={a_{\alpha\beta}}|_{V_0}$\, $\forall \alpha,\beta\in \Sigma_n$.
\\
For such $a$ extending $b$, the classical result of \cite{Kiga} implies $\displaystyle \ce_1(c_{\alpha,\beta})\geq \frac{3}{5}\ce_0(b_{\alpha\beta})$, with equality if and only if $c_{\alpha\beta}$ is the harmonic extension of $b_{\alpha\beta}$.
\\
Summing up, we get that, whenever $\rho_n(a) =b$, we have $\ce_{n+1}[a]\geq \frac{3}{5}\ce_n[b]$, with equality if and only if $a=(id_n\otimes \f)b$. \end{proof}

Summarizing the results from Section \ref{sec:symext}, the elements of the form
$\f_{[n,\infty)}(a), a\in\ca_n,n\in\bn$ form
 a dense subspace of finite energy elements in the noncommutative Gasket.

\subsection{The Dirichlet form}
As an immediate consequence of the two previous subsections, we get the following Proposition:
\begin{prop}\label{energia}{\bf Energy form on $\ca_\infty$}
\item[$(1)$] For $b\in \ca_{\infty}$, the sequence $\displaystyle \frac{5^m}{3^m}\ce_m[\rho_m(b)]$ is nondecreasing. Hence its limit
 $$\ce_\infty[b]=\lim_m \displaystyle \frac{5^m}{3^m}\,\ce_m[\rho_m(b)]$$ exists in $[0,+\infty]$.
 \item[$(2)$] $\ce_\infty$ is a densely defined quadratic form on $\ca_{\infty}$, closed for the uniform topology.
\\
In other terms, its domain
 $$\mathcal B=\{b\in \ca_{\infty}\,|\,\ce_\infty[b]<+\infty\}$$
 is dense in $\ca_\infty$ and a complete space for the norm
 $\left(||b||_{\ca_{\infty}}^2+\ce_\infty[b]\right)^{1/2}$\,.
\end{prop}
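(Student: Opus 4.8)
Both statements will follow from the harmonic structure of Proposition \ref{arm} combined with the compatibility $\rho_m=\rho_m\circ\rho_{m+1}$ on $\ca_\infty$ recorded in Proposition \ref{restrizioni}. For part $(1)$, fix $b\in\ca_\infty$ and put $c_m=\rho_m(b)\in\ca_m$. Since $\rho_m(\rho_{m+1}(b))=\rho_m(b)=c_m$, the element $\rho_{m+1}(b)\in\ca_{m+1}$ is an extension of $c_m$ in the sense of Proposition \ref{arm}$(1)$, and therefore lies above the infimum $\tfrac35\ce_m[c_m]$; this gives $\ce_{m+1}[\rho_{m+1}(b)]\ge\tfrac35\ce_m[\rho_m(b)]$, and multiplying by $(5/3)^{m+1}$ shows that $\tfrac{5^m}{3^m}\ce_m[\rho_m(b)]$ is nondecreasing, so its limit $\ce_\infty[b]\in[0,+\infty]$ exists.

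For the density asserted in part $(2)$, I would use the dense subspace already identified, namely the $\f_{[n,\infty)}(a)$ with $a\in\ca_n$, $n\in\bn$, and verify that these have finite energy. Iterating the eigenform identity of Proposition \ref{arm}$(2)$ gives $\ce_m[\f_{[n,m]}(a)]=(3/5)^{m-n}\ce_n[a]$ for $m\ge n$, while the extension property of Proposition \ref{EA2}$(3)$ yields $\rho_m(\f_{[n,\infty)}(a))=\f_{[n,m]}(a)$. Consequently the defining sequence $\tfrac{5^m}{3^m}\ce_m[\rho_m(\f_{[n,\infty)}(a))]$ equals the constant $\tfrac{5^n}{3^n}\ce_n[a]$ for every $m\ge n$, so $\ce_\infty[\f_{[n,\infty)}(a)]=\tfrac{5^n}{3^n}\ce_n[a]<+\infty$ and $\mathcal B$ is dense in $\ca_\infty$.

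The substantive point is closedness. Here I would write $\ce_\infty=\sup_m q_m$ with $q_m=\tfrac{5^m}{3^m}\,\ce_m\circ\rho_m$, observing that each $q_m$ is a nonnegative quadratic form on $\ca_\infty$ that is continuous for the uniform norm, since $\rho_m$ is a $*$-morphism (hence a contraction) and $\ce_m$ is a continuous form on the finite-dimensional algebra $\ca_m$. By part $(1)$ this supremum is an increasing limit, so $\ce_\infty^{1/2}$ is a $[0,+\infty]$-valued seminorm and, being a supremum of continuous functions, $\ce_\infty$ is lower semicontinuous for the uniform topology. Completeness of $\big(\mathcal B,(\|\cdot\|_{\ca_\infty}^2+\ce_\infty[\,\cdot\,])^{1/2}\big)$ then follows by the standard lower-semicontinuity argument: a sequence $b_k$ that is Cauchy for the graph norm converges in norm to some $b\in\ca_\infty$, and for $k,l$ large one has $\ce_\infty[b_k-b_l]\le\eps$; letting $l\to\infty$ and using lower semicontinuity gives $\ce_\infty[b_k-b]\le\eps<+\infty$, whence $b\in\mathcal B$ and $b_k\to b$ in the graph norm. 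I expect this closedness step to be the main obstacle: the real work is to present $\ce_\infty$ as a genuine lower-semicontinuous $[0,+\infty]$-valued quadratic form, rather than a mere pointwise limit, so that the seminorm and lower-semicontinuity machinery applies without gaps.
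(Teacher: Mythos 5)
Your proposal is correct and follows essentially the same route as the paper's (very terse) proof: monotonicity of $\frac{5^m}{3^m}\ce_m[\rho_m(b)]$ from the harmonic-structure infimum of Proposition \ref{arm} together with $\rho_m=\rho_m\circ\rho_{m+1}$, and density of $\mathcal B$ from the stationarity of that sequence on the harmonic extensions $\f_{[n,\infty)}(a)$, which is exactly the observation the paper makes. Your closedness argument --- writing $\ce_\infty$ as an increasing supremum of norm-continuous quadratic forms $\frac{5^m}{3^m}\ce_m\circ\rho_m$, deducing lower semicontinuity, and running the standard graph-norm Cauchy argument --- is the standard completion of the step the paper subsumes under ``everything is a consequence of the properties above,'' so it fills in detail rather than departing from the paper's method.
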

\begin{proof}
Everything is a consequence of the properties above. For the density of $\mathcal B$ in $\ca_\infty$, observe that for $b=\varphi_{[n,\infty)}(b_n)$, the sequence $ \frac{5^m}{3^m}\,\ce_m[\rho_m(b)]$ is stationary.
\end{proof}

\begin{lem}\label{norm1}\-

\item[$(1)$] For $b\in \mathcal B$, the following inequality holds true
   $$\displaystyle ||\rho_{n+1}(b)-\rho_n(b)\otimes I_{n+2} ||^2 \leq \frac{3^{n+1}}{5^{n+1}}\,\ce_\infty[b]
$$ for the operator norm in $M_3(\bc)^{\otimes (n+2)}$.

\item[$(2)$] There exists  $C$ independent of $n$ such that, $\forall b\in \mathcal B$, $$\displaystyle ||\rho_{n+1}(b)-\rho_0(b)\otimes I_{[2,n+2]} ||^2 \leq C\,\ce_\infty[b]
$$
and
$$\displaystyle ||b-\rho_0(b)\otimes I_{[2,\infty)} ||^2 \leq C\,\ce_\infty[b]\,.
$$
\end{lem}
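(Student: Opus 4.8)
The plan is to reduce everything to part $(1)$, and to prove part $(1)$ by recognising the left-hand side as (a piece of) the oscillation of $\rho_{n+1}(b)$, which is in turn dominated by the level-$(n+1)$ energy. The first step is to make the two restriction maps completely explicit. Since $\ca_{n+1}\subset M_3(\bc)^{\otimes(n+1)}\otimes\bc(V_0)$ and the last tensor leg is the diagonal subalgebra, $\rho_{n+1}(b)$ is block-diagonal in that leg, $\rho_{n+1}(b)=\sum_{i=1}^3\rho_{n+1}(b)(v_i)\otimes e_{ii}$. By Lemma \ref{restrizione} and Proposition \ref{restrizioni} the one-step restriction $\ca_{n+1}\to\ca_n$ is $id_{n+1}\otimes e_{22}^*$, so $\rho_n(b)=(id_{n+1}\otimes e_{22}^*)(\rho_{n+1}(b))=\rho_{n+1}(b)(v_2)$. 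Hence
$$\rho_{n+1}(b)-\rho_n(b)\otimes I_{n+2}=\sum_{i=1}^3\big(\rho_{n+1}(b)(v_i)-\rho_{n+1}(b)(v_2)\big)\otimes e_{ii},$$
and, this operator being block-diagonal, its norm is $\max_i\|\rho_{n+1}(b)(v_i)-\rho_{n+1}(b)(v_2)\|\leq\osc(\rho_{n+1}(b))$.

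For part $(1)$ I would then bound the oscillation by the energy. Since $\tr$ is the unnormalised trace (as is visible from \eqref{tren}), every matrix $m$ satisfies $\|m\|^2\leq\tr(m^*m)=\tr|m|^2$. Applying this to each difference and summing gives $\osc(\rho_{n+1}(b))^2\leq\sum_{i\neq j}\tr|\rho_{n+1}(b)(v_i)-\rho_{n+1}(b)(v_j)|^2=\ce_{n+1}[\rho_{n+1}(b)]$. Because the sequence $\tfrac{5^m}{3^m}\ce_m[\rho_m(b)]$ is nondecreasing with limit $\ce_\infty[b]$ (Proposition \ref{energia}), each term is at most the limit, so $\ce_{n+1}[\rho_{n+1}(b)]\leq\tfrac{3^{n+1}}{5^{n+1}}\ce_\infty[b]$. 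Chaining the three inequalities yields part $(1)$.

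For part $(2)$ I would telescope. Put $T_k=\rho_k(b)\otimes I_{[k+2,n+2]}$, so that $T_{n+1}=\rho_{n+1}(b)$ and $T_0=\rho_0(b)\otimes I_{[2,n+2]}$. Then $T_{k+1}-T_k=\big(\rho_{k+1}(b)-\rho_k(b)\otimes I_{k+2}\big)\otimes I_{[k+3,n+2]}$, whose norm equals that of its nontrivial factor and is therefore at most $(3/5)^{(k+1)/2}\,\ce_\infty[b]^{1/2}$ by part $(1)$. The triangle inequality over $k=0,\dots,n$ together with the convergent geometric series $\sum_{k\geq0}(3/5)^{(k+1)/2}=:C^{1/2}$ gives the first estimate with $C$ independent of $n$. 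The infinite version follows by letting $n\to\infty$: by Lemma \ref{omegan} one has $b=\lim_n\rho_{n+1}(b)$ in the UHF norm, while $\rho_0(b)\otimes I_{[2,n+2]}$ is, as an element of $\UHF(3^\infty)$, just $\rho_0(b)\otimes I_{[2,\infty)}$ for every $n$; continuity of the norm then transfers the uniform bound to the limit.

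The only delicate point—the place I would expect to be the main obstacle—is the bookkeeping of the tensor-leg positions and the identification $\rho_n(b)=\rho_{n+1}(b)(v_2)$, i.e. checking that composing the defining formula for $\rho_{n+1}$ with the single $e_{22}^*$ of Lemma \ref{restrizione} really reproduces $\rho_n$. Once this concrete picture is secured, the remaining analytic content is elementary: the inequality $\|m\|^2\leq\tr|m|^2$, the monotonicity from Proposition \ref{energia}, and a summable geometric series.
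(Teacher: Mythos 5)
Your proof is correct and takes essentially the same route as the paper's: you identify $\rho_n(b)$ as the $e_{22}^*$-component (value at $v_2$) of $\rho_{n+1}(b)$, dominate the block-diagonal operator norm by the $L^2(\tr)$-norm and hence by $\ce_{n+1}[\rho_{n+1}(b)]$, and invoke the monotonicity of $\frac{5^m}{3^m}\ce_m[\rho_m(b)]$ from Proposition \ref{energia}. For part $(2)$ the paper only remarks that it is ``a mere consequence of $(1)$''; your telescoping sum with the geometric series $\sum_{k\geq0}(3/5)^{(k+1)/2}$, together with the limit $b=\lim_n\rho_n(b)$ from Lemma \ref{omegan}, is exactly the intended completion.
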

\begin{proof}
$(2)$ is a mere consequence of $(1)$.

To prove $(1)$, observe first that $\displaystyle \ce_{n+1}[\rho_{n+1}(b)]\leq \frac{3^{n+1}}{5^{n+1}}\ce_\infty[b]$ (prop. \ref{energia}).

Then write $\rho_{n+1}(b)\in M_3(\bc)^{\otimes(n+1)}\otimes \bc^3$ as
$$\rho_{n+1}(b)=x\otimes e_{11}+y\otimes e_{22}+z\otimes e_{33}$$
with $x,y,z\in M_3(\bc)^{\otimes(n+1)}$ and $y=\big(id_{n+1}\otimes e_{22}^*\big)(\rho_{n+1}(b))=\rho_n(b)$.

Compute $\ce_m$ according to Definition \ref{formaEn}\,:
\begin{equation*}\begin{split}
 \ce_{n+1}[\rho_{n+1}(b)]&=\sum_{\b,\a\in \{1,2,3\}^{n+1}}|x_{\b,\a}-y_{\b,\a}|^2+|y_{\b,\a}-z_{\b,\a}|^2+|z_{\b,\a}-x_{\b,\a}|^2\\
 &=||x-y||^2_{L^2(Tr)}+||y-z||^2_{L^2(Tr)}+||z-x||^2_{L^2(Tr)}\\
 &=||x-\rho_n(b)||^2_{L^2(Tr)}+||\rho_n(b)-z||^2_{L^2(Tr)}+||z-x||^2_{L^2(Tr)}\,.
\end{split}\end{equation*}
As the operator norm is dominated by the $L^2$-norm (with respect to the nonnormalized trace), we get
\begin{equation*}
\begin{split}
||\rho_{n+1}(b)-\rho_n(b)\otimes Id||^2_{op} &=||x\otimes e_{11}+y\otimes e_{22}+z\otimes e_{33} -\rho_n(b)\otimes (e_{11}+e_{22}+e_{33} )||^2\\
&=\max(||x-\rho_n(b)||^2,||z-\rho_n(b)||^2)\\
&\leq \max(||x-\rho_n(b)||^2_{L^2(Tr)},||z-\rho_n(b)||^2_{L^2(Tr)})\\
&\leq \ce_{n+1}[\rho_{n+1}(b)]\\
&\leq  \frac{3^{n+1}}{5^{n+1}}\ce_\infty[b]\,.
\end{split}
\end{equation*}
\end{proof}

\begin{cor}\label{norm2}
For $b\in \mathcal B\,\cap\,  \ca_\infty^0$
$$
||b||^2\leq C\,\ce_\infty[b]\,.
$$
\end{cor}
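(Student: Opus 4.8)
The plan is to read off this corollary directly from the second inequality in Lemma \ref{norm1}(2), exploiting the defining property of the ideal $\ca_\infty^0$. First I would recall that, by definition, $\ca_\infty^0$ is the kernel of the homomorphism $\rho_0\colon\ca_\infty\to\bc(V_0)$; hence for every $b\in\ca_\infty^0$ one has $\rho_0(b)=0$, and therefore the amplified element $\rho_0(b)\otimes I_{[2,\infty)}$ is simply $0$ in $\UHF(3^\infty)$.

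With this observation in hand, the bound is essentially immediate. Lemma \ref{norm1}(2) provides a constant $C$, independent of $n$, such that for all $b\in\mathcal B$,
$$
\|b-\rho_0(b)\otimes I_{[2,\infty)}\|^2\leq C\,\ce_\infty[b]\,.
$$
Specializing to $b\in\mathcal B\cap\ca_\infty^0$ and substituting $\rho_0(b)=0$, the subtracted term disappears and the left-hand side collapses to $\|b\|^2$, yielding $\|b\|^2\leq C\,\ce_\infty[b]$ as claimed.

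There is no genuine obstacle here: the entire analytic content—the domination of the operator norm by the $L^2(\Tr)$-norm and the telescoping estimate relating $\rho_{n+1}(b)-\rho_n(b)\otimes I$ to the energy at level $n+1$—has already been carried out in Lemma \ref{norm1}. What this corollary adds is only the remark that on the ideal $\ca_\infty^0$ the ``constant part'' $\rho_0(b)$ of an element vanishes, so that the full norm of $b$ is controlled by its energy. I would therefore present the argument as a one-line deduction, taking care to cite Lemma \ref{norm1}(2) for the inequality and the definition of $\ca_\infty^0$ for the vanishing of $\rho_0(b)$.
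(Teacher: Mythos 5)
Your proof is correct and is exactly the paper's own argument: the authors prove this corollary by applying Lemma \ref{norm1}(2) with $\rho_0(b)=0$, which is precisely your one-line deduction. Nothing further is needed.
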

\begin{proof}
Apply $(2)$ of lemma \ref{norm1} with $\rho_0(b)=0$.
\end{proof}

Notice that $\mathcal B\cap \ca_\infty^0$ has codimension $3$ in $\mathcal B$.

\medskip

In the sequel, $\tau$ will denote the normalized trace on $\UHF(3^\infty)$, as well as its  restrictions to the subalgebras $M_3(\bc)^{\otimes n}$, $\bc(V_n)$, $\ca_\infty$, $C(\Sigma_\infty)$ and  $C(K)$. On $C(\Sigma_\infty)$, it is the equidistributed Bernoulli measure; on $C(K)$, it coincides with the symmetric self-similar measure.

$E_\tau$ will denote the conditional expectation from $\UHF(3^\infty)$ onto $C(\Sigma_\infty)$. Its restriction to $M_3(\bc)^{\otimes n}$ is the restriction of a matrix to its diagonal\,: $E_\tau(X)_{\alpha,\beta}=\delta_{\alpha,\beta} X_{\alpha,\beta}$, $\alpha,\beta\in \Sigma_n$. One checks easily that $E_\tau(\ca_n)=\bc(V_n)$ (check on each generator of $\ca_n$), and consequently that $E_\t$ sends $\ca_\infty$ onto $C(K)$.

Moreover, it is obvious from the Definition
\ref{formaEn} of $\ce_n$ that $\ce_n[E_\tau(b)]\leq \ce_n[b]$, and consequently $E_\tau(\mathcal B)\subset \mathcal B$, with $\ce_\infty[E_\tau(b)]\leq \ce_\infty[b]$.

\begin{prop}
[Noncommutative Sobolev inequality]\label{Feller}
There exists a constant $C'$ such that
$$||b||_{\ca_{\infty}}^2 \leq C'\,\big(\ce_\infty[b]+\tau(b^*b)\big)\,,\,b\in \mathcal B\,.$$

\end{prop}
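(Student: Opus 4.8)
The plan is to bootstrap from Corollary~\ref{norm2}, which already yields $\|b\|^2\le C\,\ce_\infty[b]$ on the ideal $\mathcal B\cap\ca_\infty^0=\ker\big(\rho_0|_{\mathcal B}\big)$. Since this ideal has codimension $3$ in $\mathcal B$, the missing information is exactly the triple of boundary values $\rho_0(b)\in\bc(V_0)=\bc^3$; and because $V_0$ is a null set for the diffuse trace $\tau$, these three directions are invisible to $\ce_\infty$. The role of the term $\tau(b^*b)$ is precisely to control them, and the whole argument amounts to making this splitting quantitative.

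First I would set $c:=\rho_0(b)\otimes I_{[2,\infty)}$ and write $b=(b-c)+c$. The deviation $b-c$ is controlled by the energy through the second inequality of Lemma~\ref{norm1}(2), which gives
\[
\|b-c\|^2\le C\,\ce_\infty[b].
\]
To estimate $c$ I would pass to the $L^2(\tau)$-norm: since $\tau$ is a state, $\tau\big((b-c)^*(b-c)\big)\le\|b-c\|^2\le C\,\ce_\infty[b]$, so by the triangle inequality in $L^2(\tau)$ together with $2xy\le x^2+y^2$,
\[
\tau(c^*c)\le 2\big(\tau(b^*b)+C\,\ce_\infty[b]\big).
\]

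The key elementary observation is then the equivalence of operator and $L^2$ norms on the $3$-dimensional commutative piece: writing $\rho_0(b)=\diag(\lambda_1,\lambda_2,\lambda_3)$, the element $c$ has operator norm $\max_j|\lambda_j|$ whereas $\tau(c^*c)=\tfrac13\sum_j|\lambda_j|^2$ for the normalized trace, so $\|c\|^2\le\sum_j|\lambda_j|^2=3\,\tau(c^*c)$. Combining the three displays via $\|b\|^2\le 2\|b-c\|^2+2\|c\|^2$ produces $\|b\|^2\le C'\big(\ce_\infty[b]+\tau(b^*b)\big)$ with, say, $C'=14C$ (assuming $C\ge 1$). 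I do not expect a genuine obstacle, since the only substantive input — the operator-norm bound on $b-\rho_0(b)\otimes I_{[2,\infty)}$ by the energy — is already supplied by Lemma~\ref{norm1}(2); the one point requiring care is that the trace term cannot be dropped, as it is exactly what compensates for the three boundary directions on which $\ce_\infty$ is blind, and it is the finite-dimensionality of $\bc^3$ that lets the weak $L^2$-control coming from $\tau$ upgrade to the operator norm there.
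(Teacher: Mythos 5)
Your proof is correct, but it takes a genuinely different route from the paper. The paper splits $b$ by the trace-preserving conditional expectation onto the diagonal, $b=E_\tau(b)+\big(b-E_\tau(b)\big)$, shows via self-similarity (Proposition \ref{autosim}) that the two pieces are $\ce_\infty$-orthogonal, handles the off-diagonal part $b-E_\tau(b)\in\ca_\infty^0$ by Corollary \ref{norm2}, and — crucially — handles the diagonal part $E_\tau(b)\in C(K)$ by importing the \emph{classical} Sobolev inequality \eqref{clasineq} on the gasket from Kigami's theory. You instead split $b=(b-c)+c$ with $c=\rho_0(b)\otimes I_{[2,\infty)}$, control $b-c$ in operator norm directly by the second inequality of Lemma \ref{norm1}(2) (the same telescoping estimate underlying Corollary \ref{norm2}), and upgrade the $L^2(\tau)$-control of $c$, obtained from the triangle inequality, to operator-norm control using that $c$ lies in the $3$-dimensional algebra $\bc(V_0)\otimes I$, where $\|c\|^2=\max_j|\lambda_j|^2\leq 3\,\tau(c^*c)$ for the normalized trace. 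One point worth making explicit: $c$ need \emph{not} belong to $\ca_\infty$ (the constant extension of $\a^0_j$ is not even a well-defined function on $V_1$, since it assigns different values to the identified labels $11=33$), but this is harmless because all your estimates take place in $\UHF(3^\infty)$, where $b-c$, $c$ and $\tau$ make sense and where Lemma \ref{norm1}(2) is stated. What each approach buys: yours is shorter, yields an explicit constant $C'=14\max(C,1)$, and is fully self-contained — it avoids the classical input \eqref{clasineq} altogether, and indeed, restricted to $b\in C(K)\cap\mathcal B$, it reproves that classical inequality; the paper's route is heavier but establishes along the way the Pythagorean identity $\ce_\infty[b]=\ce_\infty[E_\tau(b)]+\ce_\infty[b-E_\tau(b)]$ and the compatibility of the noncommutative energy with the classical one, which have independent interest. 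One cosmetic correction to your motivating sentence: the three boundary directions are not entirely ``invisible'' to $\ce_\infty$, since $\ce_0[\rho_0(b)]\leq\ce_\infty[b]$ already controls the differences $|\lambda_i-\lambda_j|$; only the common mean of the boundary values escapes the energy, so strictly speaking the trace term compensates for a one-dimensional, not three-dimensional, kernel — but this affects only the heuristics, not your estimates, which are correct as written.
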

\begin{proof}

Let us fix $b\in \mathcal B$ and notice that

$(a)$   $E_\tau(b)$ lies in $C(K)$, hence $(e_{ij}^*\otimes id)E_\tau(b)=0$, for $i\not=j$\,;

$(b)$  $b-E_\tau(b)$ has vanishing diagonal components, i.e. $(e_{ii}^*\otimes id)\big(b-E_\tau(b)\big)=0$ for $i=1,2,3$\,;

$(c)$  the quadratic form $\ce_\infty$ being autosimilar, one can apply Lemma  \ref{autosim} which, passing to the limit and polarizing, provides for the associated sesquilinear form\,:
$$\ce_\infty\big(E_\tau(b)\,,\,b-E_\tau(b)\big)=\frac{5}{3}\sum_{ij} \ce_\infty\big(\underbrace{e_{ij}^*\otimes id_{2,\infty}E_\tau(b)}_{=0 \text{ for }i\not=j}, \underbrace{e_{ij}^*\otimes id_{2,\infty}(b-E_\tau(b)}_{=0 \text{ for }i=j})
\big)=0\,;$$
from which we conclude\,: $\ce_\infty[b]=\ce_\infty[E_\tau(b)]+\ce_\infty[b-E_\tau(b)]$\,;

$(d)$  the projection $\rho_0(b)$ depends  only on diagonal components of $b$, hence $\rho_0(E_\tau(b))=\rho_0(b)$;

$(e)$  $b-E_\tau(b)\in \ca_\infty^0$ satisfies the assumptions of Corollary \ref{norm2}\,;

$(f)$  the result we seek to prove is true in the classical case \cite{Kiga}:  there exists $C_0$ such that
\begin{equation}\label{clasineq}
||b||_{C(K)}^2 \leq C_0\,\big(\ce_\infty[b]+\tau(b^*b)\big)\,,\,b\in \mathcal B\cap C(K)\,.
\end{equation}
By \eqref{clasineq} and Corollary \ref{norm2}, we get for any $b\in \mathcal B$
\begin{equation*}
\begin{split}
||b||^2&\leq 2\big(||E_\tau(b)||^2+||b-E_\tau(b)||^2) \\
&\leq 2C_0\big(\ce_\infty[E_\tau(b)] + \tau(E_\tau(b)^*E_\tau(b))\big)+2C \ce_\infty[b-E_\tau(b)]\\
&\leq 2\max(C,C_0)\big(\ce_\infty[b]+\tau(b^*b)\big)\,.
\end{split}
\end{equation*}
 \end{proof}
Let us summarize the previous results as a theorem:
\begin{thm}
\item[$(1)$] $\cb$ is an algebra and is complete for the norm $\big(||b||_{|L^2(\tau)}^2+\ce[b]\big)^{1/2}$.
\item[$(2)$] $\ce$ is a symmetric Dirichlet form on $L^2(\ca_{\infty},\tau)$.
\item[$(3)$] The associated Dirichlet algebra and Dirichlet space coincide and are equal to $\cb$.
\end{thm}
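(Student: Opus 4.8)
The plan is to obtain all three assertions from the finite-level forms $\ce_n$ of Definition \ref{formaEn}, transported to $\ca_\infty$ through the monotone limit $\ce_\infty[b]=\lim_m (5/3)^m\ce_m[\rho_m(b)]$ of Proposition \ref{energia} and controlled by the Sobolev estimate of Proposition \ref{Feller}. The recurring device is that each $\rho_m$ is a $\ast$-morphism (Proposition \ref{restrizioni}), so it commutes with products and with continuous functional calculus; consequently every algebraic or order property established for the $\ce_m$ passes to $\ce_\infty$ after multiplying by $(5/3)^m$ and letting $m\to\infty$.

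For (1), I would first prove a Leibniz inequality at each level. Writing $b\in M_3(\bc)^{\otimes n}\otimes\bc(V_0)$ as a matrix-valued function $v\mapsto b(v)$ on $V_0$ and using $\|XY\|_{L^2(\tr)}\le\|X\|\,\|Y\|_{L^2(\tr)}$, the identity $(ab)(v_i)-(ab)(v_j)=a(v_i)\big(b(v_i)-b(v_j)\big)+\big(a(v_i)-a(v_j)\big)b(v_j)$ together with Minkowski's inequality in $\ell^2$ over the pairs $i\ne j$ yields
$$
\ce_n[ab]^{1/2}\le\|a\|\,\ce_n[b]^{1/2}+\|b\|\,\ce_n[a]^{1/2}.
$$
Applying this to $\rho_m(a),\rho_m(b)$, using $\rho_m(ab)=\rho_m(a)\rho_m(b)$ and $\|\rho_m(\cdot)\|\le\|\cdot\|$, then multiplying by $(5/3)^{m/2}$ and passing to the limit, gives $\ce_\infty[ab]^{1/2}\le\|a\|\,\ce_\infty[b]^{1/2}+\|b\|\,\ce_\infty[a]^{1/2}$, so $\mathcal B$ is a $\ast$-subalgebra (closure under adjoints follows from the reality noted below). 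Completeness for the norm $(\tau(b^*b)+\ce_\infty[b])^{1/2}$ is then immediate from the completeness for $(\|b\|_{\ca_\infty}^2+\ce_\infty[b])^{1/2}$ proved in Proposition \ref{energia}(2): Proposition \ref{Feller} together with $\tau(b^*b)\le\|b\|_{\ca_\infty}^2$ shows the two graph norms are equivalent on $\mathcal B$, so the same Cauchy sequences with the same limits are involved.

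For (2), reality and symmetry follow level by level: since $\tr|X|^2=\tr(X^*X)=\tr(XX^*)=\tr|X^*|^2$, Definition \ref{formaEn} gives $\ce_n[b^*]=\ce_n[b]$, hence $\ce_\infty[b^*]=\ce_\infty[b]$; closedness on $L^2(\ca_\infty,\tau)$ is exactly the completeness from (1). The essential point is the Markov (unit-contraction) property. Here I would use that each $\ce_n$ is the amplification of the classical Dirichlet form $\ce_0$ on $\bc(V_0)=\bc^3$ by the matrix factor $M_3(\bc)^{\otimes n}$: its generator is $id\otimes\Delta_0$ and the associated semigroup is $id\otimes e^{-t\Delta_0}$, completely positive, unital and $\tr$-symmetric, so $\ce_n$ is a (completely) Dirichlet form. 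Equivalently, for self-adjoint $c$ the truncation $c^\#=(0\vee c)\wedge 1$ obeys $\ce_n[c^\#]\le\ce_n[c]$, because $t\mapsto(0\vee t)\wedge 1$ is $1$-Lipschitz and hence an $L^2(\tr)$-contraction on self-adjoint matrices (divided-difference estimate). Given self-adjoint $b\in\mathcal B$, the morphism property gives $\rho_m(b^\#)=(0\vee\rho_m(b))\wedge 1=\rho_m(b)^\#$, whence $(5/3)^m\ce_m[\rho_m(b^\#)]\le(5/3)^m\ce_m[\rho_m(b)]$; letting $m\to\infty$ yields $\ce_\infty[b^\#]\le\ce_\infty[b]$, and $b^\#\in\ca_\infty$ forces $b^\#\in\mathcal B$. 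This is the Markov property, so $\ce_\infty$ is a symmetric Dirichlet form.

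For (3), the Dirichlet space is the form domain, i.e. the completion of $\mathcal B$ in the graph norm inside $L^2(\ca_\infty,\tau)$; by (1) this completion is already $\mathcal B$, and Proposition \ref{Feller} guarantees there are no unbounded finite-energy elements, so the Dirichlet space equals $\mathcal B$. The Dirichlet algebra is the set of bounded elements of the form domain; since $\mathcal B\subset\ca_\infty$ consists of bounded operators, every element of the form domain is bounded, and this set again equals $\mathcal B$, so the two coincide. The main obstacle is the Markov property in (2): verifying that truncation does not increase $\ce_n$ (the amplification/operator-Lipschitz step) and checking that this order inequality is stable under the $\ast$-morphisms $\rho_m$ and the monotone limit. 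Once that is secured, the algebra and completeness statements are routine consequences of the Leibniz inequality and the Sobolev equivalence of norms.
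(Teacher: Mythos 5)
Your proposal is correct, and its skeleton for part $(1)$ coincides with the paper's: the paper proves completeness exactly as you do, by observing that the Sobolev inequality of Proposition \ref{Feller} (together with the trivial bound $\tau(b^*b)\le \|b\|_{\ca_\infty}^2$) makes the norms $\big(\tau(b^*b)+\ce_\infty[b]\big)^{1/2}$ and $\big(\|b\|_{\ca_\infty}^2+\ce_\infty[b]\big)^{1/2}$ equivalent on $\mathcal B$, and then invoking the completeness already established in Proposition \ref{energia}~$(2)$. Where you genuinely diverge is that the paper simply declares $(2)$ and $(3)$ ``immediate consequences of $(1)$'' and offers no argument for the algebra property or for Markovianity, whereas you supply both: the Leibniz estimate $\ce_n[ab]^{1/2}\le \|a\|\,\ce_n[b]^{1/2}+\|b\|\,\ce_n[a]^{1/2}$ transported through the morphisms $\rho_m$ and the monotone limit, and the contraction property $\ce_n[c^\#]\le\ce_n[c]$ via the divided-difference (operator-Lipschitz) bound $\|f(A)-f(B)\|_{L^2(\tr)}\le\|A-B\|_{L^2(\tr)}$ for $1$-Lipschitz $f$ and self-adjoint matrices, combined with $\rho_m(b^\#)=\rho_m(b)^\#$ since $\rho_m$ is a $*$-morphism. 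This is the honest content behind the paper's ``immediate'': in the noncommutative setting the Markov property is not a formality, and your route correctly exploits that the functional-calculus truncation coincides with the relevant $L^2(\tr)$-projection for self-adjoint elements, so the Hilbert-space and $C^*$-formulations of Markovianity agree here. (Your alternative remark that $\ce_n$ is generated by $id\otimes\Delta_0$ with $id\otimes e^{-t\Delta_0}$ completely positive, unital and trace-symmetric is also valid and closer in spirit to the paper's implicit stance, which treats the amplified $\ce_n$ as manifestly Dirichlet in Definition \ref{formaEn}.) Your handling of $(3)$ — closedness of $\mathcal B$ in the $L^2$-graph norm forces the form domain to be $\mathcal B$ itself, and the Sobolev bound prevents the domain from containing unbounded elements, so Dirichlet space and Dirichlet algebra coincide — is the correct expansion of what the paper leaves unsaid. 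In short: same strategy as the paper for completeness, with the omitted verifications carried out correctly; the only stylistic difference is that the paper buys brevity by leaning on the finite-level forms being Dirichlet by construction, while your argument makes that stability under $\rho_m$ and monotone limits explicit.
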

\begin{proof}
$(2)$ and $(3)$ are immediate consequences of $(1)$. For $(1)$, it is enough to observe that, according to Proposition
 \ref{energia}, the norms $\big(||b||_{|L^2(\tau)}^2+\ce[b]\big)^{1/2}$ and $\big(||b||_{\ca_{\infty}}^2+\ce[b]\big)^{1/2}$ are equivalent. Lemma \ref{Feller} provides the result.
\end{proof}

\section{A spectral triple for the noncommutative gasket}
Spectral triples $(H,D,\pi)$ allow to introduce the analogue of a Riemannian structure on a differentiable manifold $M$ where, for example, $H=L^2(\Lambda^* (M))$ is the Hilbert space of square integrable differential forms, $D=d+d^*$ is the Dirac operator and $\pi$ the action of the algebra $C(M)$ on $H$ by pointwise multiplication.\\
We now consider the discrete spectral triple on the continuous functions on the gasket defined in \cite{GuIs10} . We refer to \cite{GuIs16} for further details. 
Set $(\ch_n,D_n, \pi_n)$ as follows: $\ch_n=\ell^2(E_n)$, where $E_n$ is the set of oriented edges of level $n$, $D_n=2^n F_n$, where $F_n$ changes the orientation of edges, $\pi_n(f)e=f(e^+)$, with $e^+$ the target of $e$. Then we define
$(\ch,D, \pi)$ as $\ch=\oplus_n\ch_n$, $D=\oplus_n D_n$, $\pi=\oplus_n\pi_n$.

In view of the description of $C(K)$ given in Theorem \ref{Kinfinity}, we can reformulate the triples $(\ch_n,D_n, \pi_n)$ as follows. Set $\ch_n=(\bc^3)^{\otimes n}\otimes E$, with $E=\{x\in M_3(\bc):x_{jj}=0, j=1,2,3.\}$, with the scalar product on $E$ given by $(x,y)=\tr(x^*y)$. Any edge in $E_n$ can be written as $w_\s e$, where $|\s|=n$ and $e\in  E_0$. The map $w_\s$ is identified with the element $e_{\s_1\s_1}\otimes e_{\s_2\s_2} \otimes\dots\otimes e_{\s_n\s_n}\in(\bc^3)^{\otimes n}$, while denoting by $\ell_1,\ell_2,\ell_3$ three consecutive edges in the boundary of the triangle oriented clockwise, and by $\tilde \ell_1,\tilde \ell_2,\tilde \ell_3$ the edges with the opposite orientations, the identification is as follows: $\ell_1\leftrightarrow e_{21},\ell_2\leftrightarrow e_{32},\ell_3\leftrightarrow e_{13}$, the change in the orientation being given by the transposition of matrices.
In this way the operator $F_n$ on elements $x\otimes m\in (\bc^3)^{\otimes n}\otimes E$ acts as $F_n (x\otimes m)=x\otimes m^T$. It is a matter of computation to show that the action of $C(K)$ on $\ch_n$ is recovered by the position $\pi_n(a)z=\rho_n(a)z$, $z\in \ch_n$, where, since $\bc(V_n)$ is contained in $M_3(\bc)^{\otimes n}\otimes\bc^3\subset M_3(\bc)^{\otimes n}\otimes M_3(\bc)$, the action $(a\otimes b)(x\otimes m)$ is intended as $ax\otimes bm$, with the first product being the natural action of a matrix on a vector, and the second being the matrix multiplication; indeed the product between a diagonal matrix and a matrix in $E$ is still in $E$. We have proved that
\begin{prop}
The spectral triples $(\ch_n,D_n,\pi_n)$  on $C(K)$ can be equivalently described as follows: $\ch_n=(\bc^3)^{\otimes n}\otimes E$, $D_n=2^nF_n$, with $F_n (x\otimes m)=x\otimes m^T$, 
$\pi_n(a)z=\rho_n(a)z$, $z\in \ch_n$.
\end{prop}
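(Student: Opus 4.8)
The plan is to produce an explicit unitary $U\colon\ell^2(E_n)\to(\bc^3)^{\otimes n}\otimes E$ carrying the original triple to the claimed one. The starting observation is that every oriented edge of level $n$ factors uniquely as $w_\s e$ with $\s\in\Sigma_n$ and $e\in E_0$: since distinct cells of level $n$ meet only at vertices, the sets $w_\s(E_0)$ are pairwise disjoint, so $|E_n|=6\cdot3^n=\dim\big((\bc^3)^{\otimes n}\otimes E\big)$ and $\{w_\s e\}_{\s,e}$ is an orthonormal basis of $\ell^2(E_n)$. Writing $e_{\s\s}:=e_{\s_1\s_1}\otimes\cdots\otimes e_{\s_n\s_n}$ and letting $\phi\colon E_0\to E$ be the bijection $\ell_1\leftrightarrow e_{21}$, $\ell_2\leftrightarrow e_{32}$, $\ell_3\leftrightarrow e_{13}$ (with the reversed orientations sent to transposes), I would set $U(w_\s e)=e_{\s\s}\otimes\phi(e)$.

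To see that $U$ is unitary it is enough to check that it sends the orthonormal basis $\{w_\s e\}$ to an orthonormal basis. The $e_{\s\s}$ are orthonormal in $(\bc^3)^{\otimes n}$, and for the off-diagonal matrix units one computes $(e_{ij},e_{kl})=\tr(e_{ji}e_{kl})=\delta_{ik}\delta_{jl}$, so the six vectors $\phi(e)$ form an orthonormal basis of $E$ for the inner product $(x,y)=\tr(x^*y)$. Hence $U$ carries a basis isometrically to a basis and extends to a unitary. For the Dirac operators, reversing the orientation of $w_\s e$ fixes the cell $\s$ and sends $e$ to its opposite, which under $\phi$ is exactly $\phi(e)^T$; thus $U F_n U^*=\mathrm{id}\otimes T$ with $T$ the transpose on $E$ (well defined, since transposition preserves vanishing diagonals), and $D_n=2^nF_n$ goes to $2^n(\mathrm{id}\otimes T)$.

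The remaining, and most delicate, point is the intertwining of the representations. For $a\in C(K)$ the given action is diagonal multiplication $\pi_n(a)(w_\s e)=a\big(w_\s(e^+)\big)\,w_\s e$. On the other side $\rho_n(a)=a|_{V_n}$ lies in $\bc(V_n)\subset M_3(\bc)^{\otimes n}\otimes\bc^3$; tracing the identification $f\mapsto a_f$ used in the classical computation (equivalently Lemma~\ref{indicesBis}), it is the diagonal element $\sum_\s e_{\s\s}\otimes\mathrm{diag}\big(a(w_\s v_1),a(w_\s v_2),a(w_\s v_3)\big)$. Its action on $e_{\s\s}\otimes e_{ij}$ picks out the cell $\s$ and multiplies the matrix unit $e_{ij}\in E$ by the row-indexed entry, giving $a(w_\s v_i)\,e_{\s\s}\otimes e_{ij}$. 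Comparing with the pushed-forward original action, the two agree precisely when $\phi$ matches row index to target vertex and column index to source, i.e. $e^+=v_i$ whenever $\phi(e)=e_{ij}$; this is exactly the content of the convention $\ell_1\leftrightarrow e_{21},\ell_2\leftrightarrow e_{32},\ell_3\leftrightarrow e_{13}$ under the clockwise numbering, whence $U\pi_n(a)U^*=\rho_n(a)$ on basis vectors. The main obstacle is therefore purely one of bookkeeping: confirming that $\rho_n(a)$ really is the diagonal matrix of target values, and that the row/column convention defining $\phi$ is compatible with the source/target structure of $E_0$; once both are pinned down, the three intertwining identities hold on the basis $\{w_\s e\}$ and the proposition follows.
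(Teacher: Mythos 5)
Your proposal is correct and takes essentially the same route as the paper: the paper's own argument is exactly the identification $w_\sigma e \leftrightarrow e_{\sigma_1\sigma_1}\otimes\cdots\otimes e_{\sigma_n\sigma_n}\otimes\phi(e)$ with the convention $\ell_1\leftrightarrow e_{21}$, $\ell_2\leftrightarrow e_{32}$, $\ell_3\leftrightarrow e_{13}$ and orientation reversal given by transposition, after which it declares the intertwining of the representations ``a matter of computation''. You simply make the implementing unitary explicit and carry out that computation (diagonal left multiplication picks the row index, which your convention correctly matches to the target vertex $e^+$), so your write-up is a fleshed-out version of the paper's proof rather than a different argument.
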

We can now define spectral triples on $\ca_\infty$. We keep the spaces $\ch_n$ and  $\ch$ and the operators $F_n$, $D_n$ and $D$  as above. The representations $\pi_n$ are formally the same as above, namely $\pi_n(a)x=\rho_n(a)x$, but $\rho_n$ is now extended to $\ca_\infty$.

In order to show that there is a dense $^*$-algebra of elements in $\ca_\infty$ for which the commutator with $D$ is bounded, we consider the symmetric extension corresponding to $t=1/2$ defined in  \eqref{t-ext}. We denote such extension by the letter $\th$, and call it the affine extension.

\begin{prop}\label{QMS}
The seminorm $a\in\ca_\infty\mapsto L(a)=\|[D,a]\|$ is a Lip-norm in the sense of Rieffel \cite{Rief}, namely $(\ca_\infty,L)$ is a quantum metric space.
More precisely, the set $\cl=\{a\in\ca_\infty:L(a)<\infty\}$ is a dense $*-$algebra of $\ca_\infty$, and $(\ca_\infty)_{1,1}=\{a\in\ca_\infty:\|a\|\leq1,L(a)\leq1\}$ is relatively compact in norm.
\end{prop}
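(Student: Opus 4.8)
The plan is to reduce the whole statement to the single explicit formula
$$
L(a)=\sup_{n}2^{n}\,\osc(\rho_n(a)),
$$
and then to exploit the geometric decay that the weight $2^{n}$ forces on the oscillations. First I would compute $[D,a]$ directly on each summand $\ch_n=(\bc^3)^{\otimes n}\otimes E$. Writing $\rho_n(a)=\sum_{i}A_i\otimes e_{ii}$ with $A_i\in M_3(\bc)^{\otimes n}$ the diagonal blocks in the last ($\bc(V_0)$) tensor factor, one checks on the basis that $\pi_n(a)(e_\s\otimes e_{ij})=(A_ie_\s)\otimes e_{ij}$ and $F_n(e_\s\otimes e_{ij})=e_\s\otimes e_{ji}$, so that
$$
[D_n,\pi_n(a)](e_\s\otimes e_{ij})=2^{n}\big((A_i-A_j)e_\s\big)\otimes e_{ji},
$$
whence $\|[D_n,\pi_n(a)]\|=2^{n}\max_{i\ne j}\|A_i-A_j\|=2^{n}\osc(\rho_n(a))$. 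Since $D=\oplus_nD_n$ and $\pi=\oplus_n\pi_n$ are block diagonal, passing to the supremum over $n$ yields the formula for $L$.

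With the formula in hand the first two assertions follow quickly. For $L(a)=0\iff a\in\bc I$: if $L(a)=0$ then $\osc(\rho_n(a))=0$ for every $n$, i.e. all three corner blocks coincide and $\rho_n(a)=A^{(n)}\otimes I_3$; applying $\rho_{n-1}=id\otimes e_{22}^*$ gives $A^{(n)}=A^{(n-1)}\otimes I_3$, so inductively $\rho_n(a)=\lambda I$ for a fixed scalar, and $a=\lim_n\rho_n(a)=\lambda I$ by Lemma \ref{inter}. That $\cl$ is a $*$-subalgebra is the Leibniz rule $L(ab)\le L(a)\|b\|+\|a\|L(b)$ together with $L(a^*)=L(a)$ (as $D=D^*$). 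For density I would check that each affine extension $\th_{[n,\infty)}(b)$, $b\in\ca_n$, lies in $\cl$: for $m\ge n$ one has $\rho_m(\th_{[n,\infty)}(b))=\th_{[n,m]}(b)$, and Proposition \ref{ext1}$(3)$ gives $\osc(\th_{[n,m]}(b))\le 2^{-(m-n)}\osc(b)$, so $2^{m}\osc(\rho_m(\cdots))\le 2^{n}\osc(b)$ stays bounded; these elements are dense by Proposition \ref{EA3}$(2)$, and $\cl$ is a subspace, hence dense.

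The heart of the matter is the relative compactness of $(\ca_\infty)_{1,1}$, and the key point is an \emph{operator-norm} (not merely $L^2$) comparison of consecutive restrictions. Writing $\rho_{m+1}(a)=\sum_iA_i^{(m+1)}\otimes e_{ii}$ with $A_i^{(m+1)}\in M_3(\bc)^{\otimes(m+1)}$, and noting $\rho_m(a)=(id\otimes e_{22}^*)\rho_{m+1}(a)=A_2^{(m+1)}$, the difference of the two UHF-embeddings is, in $M_3(\bc)^{\otimes(m+2)}$,
$$
\rho_{m+1}(a)-\rho_m(a)\otimes I_3=(A_1^{(m+1)}-A_2^{(m+1)})\otimes e_{11}+(A_3^{(m+1)}-A_2^{(m+1)})\otimes e_{33},
$$
a sum over two orthogonal corner projections, so its norm equals $\max(\|A_1^{(m+1)}-A_2^{(m+1)}\|,\|A_3^{(m+1)}-A_2^{(m+1)}\|)\le\osc(\rho_{m+1}(a))\le 2^{-(m+1)}L(a)$. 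Telescoping and using $a=\lim_m\rho_m(a)$ then gives the uniform estimate $\|a-\rho_k(a)\|\le 2^{-k}L(a)$.

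Finally I would assemble total boundedness. Fix $\eps>0$ and choose $k$ with $2^{-k}<\eps/4$. As $\rho_k$ is a contraction into the finite-dimensional algebra $\ca_k$, the set $\rho_k((\ca_\infty)_{1,1})$ is bounded, hence totally bounded; selecting finitely many $a_1,\dots,a_N\in(\ca_\infty)_{1,1}$ whose images $(\eps/2)$-net it, any $a\in(\ca_\infty)_{1,1}$ admits $a_l$ with $\|\rho_k(a)-\rho_k(a_l)\|<\eps/2$, so
$$
\|a-a_l\|\le\|a-\rho_k(a)\|+\|\rho_k(a)-\rho_k(a_l)\|+\|\rho_k(a_l)-a_l\|<2^{-k}+\tfrac{\eps}{2}+2^{-k}<\eps .
$$
Thus $(\ca_\infty)_{1,1}$ is totally bounded, hence relatively compact since $\ca_\infty$ is complete. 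Together with $L(a)=0\iff a\in\bc I$ and the density of $\cl$, Rieffel's criterion \cite{Rief} then yields that $L$ is a Lip-norm, i.e. $(\ca_\infty,L)$ is a compact quantum metric space. The one delicate step to get right is the operator-norm comparison above: it is essential that the defect $\rho_{m+1}(a)-\rho_m(a)\otimes I_3$ is concentrated on the two off-diagonal corner blocks, which is exactly what upgrades the $L^2$-estimate of Lemma \ref{norm1} to the operator-norm bound making the telescoped series summable; everything else is the balancing of the factor $2^{n}$ in $D_n$ against the $2^{-n}$ decay of the oscillations imposed by $L(a)\le 1$.
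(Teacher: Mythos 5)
Your proof is correct and follows essentially the same route as the paper: the identity $\|[D_n,\pi_n(a)]\|=2^n\osc(\rho_n(a))$, the decay $\osc(\th_{[n,m]}(b))\leq 2^{-(m-n)}\osc(b)$ from Proposition \ref{ext1}$(3)$ for density, the telescoped operator-norm bound $\|a-\rho_k(a)\|\leq 2^{-k}L(a)$, and $\eps$-nets built from the finite-dimensional restrictions $\rho_k$ are exactly the paper's ingredients. The only (harmless) variations are that you choose net centers inside $(\ca_\infty)_{1,1}$ itself rather than re-extending a net in $(\ca_k)_{1,1}$ via $\th_{[k,\infty)}$ as the paper does, you spell out the corner-block argument behind $\|\rho_{m+1}(a)-\rho_m(a)\otimes I_3\|\leq\osc(\rho_{m+1}(a))$ which the paper only asserts, and you add the (correct) verification that $L(a)=0$ forces $a\in\bc I$.
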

\begin{proof}
Let us choose $a\in\ca_n$. We first observe that $\|[F_n,a]\|=\osc(a)$. 
Indeed, since $a\in M_3(\bc)^{\otimes n}\otimes\bc^3$, it can be described as a diagonal matrix $\diag (a_1,a_2,a_3)$, with entries $a_i\in M_3(\bc)^{\otimes n}$. Analogously, since $\ch_n=(\bc^3)^{\otimes n}\otimes E$, a vector $x\in \ch_n$ can be described as a matrix $(x_{ij})_{i,j=1,2,3}$ with entries $x_{ij}\in (\bc^3)^{\otimes n}$. By a simple computation, $((a-F_n a F_n)x)_{ij}=(a_i-a_j)x_{ij}$, from which
$\|(a-F_n a F_n)x\|^2=\sum_{i,j}\|(a_i-a_j)x_{ij}\|^2\leq \max_{ij}\|a_i-a_j\|^2\|x\|^2=\osc(a)^2\|x\|^2$. We then have $\|a-F_n a F_n\|\leq \osc(a)$, and this bound is clearly attained. Finally, since $F_n$ is a unitary operator, $\|[F_n,a]|=\|F_n(a-F_naF_n)\|=\osc(a)$.
As a consequence, making use of Proposition \ref{ext1} (3),
$$
\|[D_{n+p},\th_{[n,n+p]}(a)]\|=2^{n+p}\|[F,\th_{[n,n+p]}(a)]\|
=2^{n+p}\osc(\th_{[n,n+p]}(a))\leq 2^{n}\osc(a)=\|[D_{n},a]\|.
$$
Moreover, as $a\in\ca_{n}$, $\osc(\r_{n-1}(a))\leq2\osc(a)$, therefore, for $k\leq n$,
$$
\|[D_k,\rho_k(a)]\|=2^k\osc(\rho_k(a))\leq2^k2^{n-k}\osc(a)=\|[D_n,a]\|.
$$
This shows that
\begin{align*}
\|[D,\th_{[n,\infty)}(a)]\|
&=\sup_{k\in\bn}\|[D_k,\rho_k\circ \th_{[n,\infty)}(a)]\|\\
&=\max\left(\max_{k=1,\dots n}\|[D_k,\rho_k(a)]\|,\sup_{k>n}\|[D_k,\rho_k\circ \th_{[n,\infty)}(a)]\|\right)=\|[D_n,a]\|,
\end{align*}
therefore the elements $\{\th_{[n,\infty)}(a):a\in\ca_n\}$ have bounded commutator with $D$. Since such set is dense by Proposition \ref{EA3}, the seminorm $L(a)$ is densely defined.
\\
We now observe that, for $a\in\ca_\infty$, $\|\r_{n+1}(a)-\r_n(a)\|\leq\osc(\r_{n+1}(a))$, which implies that
\begin{align*}
\|\r_{n+k}(a)-\r_n(a)\|
&\leq\sum_{j=0}^{k-1}\|\r_{n+j+1}(a)-\r_{n+j}(a)\|
\leq\sum_{j=0}^{k-1}\osc(\r_{n+j+1}(a))\\
&=\sum_{j=0}^{k-1}2^{-(n+j+1)}\|[D_{n+j+1},\r_{n+j+1}(a)]\|
\leq 2^{-n}L(a),
\end{align*}
therefore, sending $k\to\infty$, $\|a-\r_n(a)\|\leq 2^{-n}L(a)$.
\\
In order to show that $L$ is a Lip-norm, it is enough to build $\eps$-nets for the set $(\ca_\infty)_{1,1}$.
We recall that both norm and Lip-norm are preserved by the restriction $\r_n$ and the extension $\th_{[n,\infty)}$.
First we choose $n$ such that $2^{-n}\leq\frac\eps3$, then we pick an $\frac\eps3$-net $X_n$ in $(\ca_n)_{1,1}=\{b\in\ca_n:\|b\|\leq1,\|[D_n,b]\|\leq 1 \}$; we claim that $\{\th_{[n,\infty)}(b):b\in X_n\}$ is an $\eps$-net  for the set $(\ca_\infty)_{1,1}$. Indeed, given $a\in(\ca_\infty)_{1,1}$, we can find $x_a\in X_n$ such that $\|\r_n(a)-x_a\|<\eps/3$. Then,
$$
\|a-\th_{[n,\infty)}(x_a)\|
\leq \| a-\r_n(a)\| + \|\r_n(a)-x_a\| + \|x_a-\th_{[n,\infty)}(x_a)\| <\eps.
$$
\end{proof}

\begin{thm}
The triple $(\ch,D,\pi)$ on $\ca_\infty$ given by $\ch=\oplus_n\ch_n$, $D=\oplus_n D_n$, $\pi = \oplus_n\pi_n$, is a spectral triple with metric dimension $d=\frac{\log 3}{\log 2}$. The formula $\Res_{s=d}\tr(a|D|^{-s})$ produces the restriction of the normalized trace on the $UHF(3^\infty)$ up to a positive constant, the formula
$$
\Res_{s=\delta}\tr\big(|[D,a]|^2|D|^{-s}\big);\ \delta=2-\frac{\log5/3}{\log2},
$$
reproduces the Dirichlet form up to a positive factor, on the domain of the Dirichlet form.
\end{thm}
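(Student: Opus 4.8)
The plan is to treat the three assertions in turn, exploiting that the pair $(\ch,D)$ is literally the commutative datum of \cite{GuIs16}: only the representation $\pi=\oplus_n\rho_n$ has changed. Since the spectrum of $|D|$ does not see $\pi$, the spectral-triple axioms and the metric dimension are inherited. Concretely, each $\pi_n=\rho_n$ is a $*$-morphism, so $\pi$ is a $*$-representation; $D=\oplus_n 2^nF_n$ is self-adjoint with $F_n$ self-adjoint and $F_n^2=I$, whence $|D|=2^n$ on $\ch_n$, a space of dimension $6\cdot3^n$. Thus $|D|$ has discrete spectrum with multiplicity $6\cdot3^n$ at $2^n$, giving $D$ compact resolvent, while the bounded-commutator property on the dense $*$-algebra $\cl$ is exactly Proposition \ref{QMS}. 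The metric dimension is the abscissa of convergence of $\tr|D|^{-s}=6\sum_n(3\cdot2^{-s})^n$, namely $d=\log3/\log2$.

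For the trace formula I would compute the operator trace block by block. Writing $\rho_n(a)=\sum_i b_i\otimes e_{ii}$, with $b_i\in M_3(\bc)^{\otimes n}$ its values on $V_0$, the action on $\ch_n=(\bc^3)^{\otimes n}\otimes E$ sends $x\otimes m$ to $\sum_i (b_i x)\otimes(e_{ii}m)$; since $m\mapsto e_{ii}m$ is the rank-two projection of $E$ onto $\{e_{il}:l\ne i\}$, this gives $\tr_{\ch_n}\pi_n(a)=2\sum_i\tr(b_i)=2\,\tr_{M_3(\bc)^{\otimes(n+1)}}\rho_n(a)=6\cdot3^n\,\tau(\rho_n(a))$, where $\tau$ is the normalized UHF trace. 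Hence $\tr(a|D|^{-s})=6\sum_n(3\cdot2^{-s})^n\,\tau(\rho_n(a))$. Because $\rho_n(a)\to a$ in norm (Lemma \ref{omegan}) one has $\tau(\rho_n(a))\to\tau(a)$, so with $z=3\cdot2^{-s}$ the series reads $6\sum_n z^n c_n$ with $c_n\to\tau(a)$. Splitting $c_n=\tau(a)+\eps_n$, $\eps_n\to0$, an Abelian argument shows the only simple pole at $z=1$ (i.e. $s=d$) comes from the geometric part $6\tau(a)/(1-z)$; since $1-z\sim\ln2\,(s-d)$, the residue is $\tfrac{6}{\ln2}\,\tau(a)$, which is the UHF trace up to a positive constant.

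For the energy formula the computational heart is the identity $\tr_{\ch_n}\bigl|[F_n,\rho_n(a)]\bigr|^2=\ce_n[\rho_n(a)]$. Indeed $[F_n,\rho_n(a)]=F_n(\rho_n(a)-F_n\rho_n(a)F_n)$ and the unitarity of $F_n$ give $\bigl|[F_n,\rho_n(a)]\bigr|^2=\bigl|\rho_n(a)-F_n\rho_n(a)F_n\bigr|^2$; by the computation in the proof of Proposition \ref{QMS} this operator acts on the $(i,j)$-component of $x$ by multiplication by $b_i-b_j$, so its trace equals $\sum_{i\ne j}\tr|b_i-b_j|^2$, which is precisely $\ce_n[\rho_n(a)]$ of Definition \ref{formaEn}. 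Interpreting the expression blockwise (each $\ch_n$ being finite-dimensional, so that it is defined even where $[D,a]$ is unbounded), we obtain $\tr(|[D,a]|^2|D|^{-s})=\sum_n 2^{(2-s)n}\,\ce_n[\rho_n(a)]$. By Proposition \ref{energia} the sequence $c_n=(5/3)^n\ce_n[\rho_n(a)]$ increases to $\ce_\infty[a]$ for $a$ in the form domain $\mathcal B$, so $\ce_n[\rho_n(a)]=(3/5)^n c_n$ and the series becomes $\sum_n\bigl(2^{2-s}\cdot\tfrac35\bigr)^n c_n$. Putting $w=2^{2-s}\cdot\tfrac35$, the critical value $w=1$ is $s=2-\log_2(5/3)=\delta$, and the same Abelian argument, with $1-w\sim\ln2\,(s-\delta)$, yields $\Res_{s=\delta}\tr(|[D,a]|^2|D|^{-s})=\tfrac{1}{\ln2}\,\ce_\infty[a]$, i.e. the Dirichlet form up to a positive factor, on all of $\mathcal B$.

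The main obstacle is twofold. The decisive point is the identity $\tr|[F_n,\rho_n(a)]|^2=\ce_n[\rho_n(a)]$, which makes the abstract residue coincide with the combinatorial energy; once the block action is identified it is a short $L^2$ computation. The delicate analytic step is the passage from the convergent Dirichlet-type series to the residue: one must justify that the null tail $\eps_n\to0$ (respectively $c_n-\ce_\infty[a]\to0$) contributes no simple pole at the critical abscissa, which is exactly the Abelian statement $(1-w)\sum_n w^n\eps_n\to0$ as $w\to1^-$ combined with $1-w\sim\ln2\,(s-s_0)$, as used in \cite{CGIS1,GuIs16}. This is precisely the mechanism selecting the limit $\ce_\infty[a]=\lim_n(5/3)^n\ce_n[\rho_n(a)]$, rather than any finite-level energy, as the coefficient of the pole.
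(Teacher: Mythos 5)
Your proposal is correct. For the spectral-triple axioms and the metric dimension you argue exactly as the paper does (the spectrum of $|D|$ is the commutative one, so compact resolvent and $d=\log3/\log2$ are inherited, and bounded commutators come from Proposition \ref{QMS}); likewise your energy computation rests on the same key identity as the paper's, namely $\tr_{\ch_n}\big|[D_n,\pi_n(a)]\big|^2=2^{2n}\,\ce_n[\rho_n(a)]$ (you derive it via $|[F_n,b]|^2=|b-F_nbF_n|^2$ and the block action from Proposition \ref{QMS}, the paper by expanding on the basis $\s\otimes e_{jk}$ --- the same computation), followed by the same Abelian residue evaluation using the monotone convergence $(5/3)^n\ce_n[\rho_n(a)]\nearrow\ce_\infty[a]$ from Proposition \ref{energia}. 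Where you genuinely diverge is the trace formula. The paper first treats harmonic extensions $b=\f_{[n,\infty)}(a)$, for which $\tr\rho_{n+k}(b)=3^k\tr a$ exactly (since $\tr\circ\f_n=3\tr$), so the tail of the series is exactly geometric and the residue is computed in closed form; it then passes to general $a\in\ca_\infty$ by density (Proposition \ref{EA3}) and boundedness of the two functionals. You instead compute the block trace for \emph{arbitrary} $a$: $\tr_{\ch_n}\pi_n(a)=2\,\tr\rho_n(a)=6\cdot3^n\tau(\rho_n(a))$, so that $\tr(a|D|^{-s})=6\sum_n(3\cdot2^{-s})^n\tau(\rho_n(a))$, and conclude by Abel's theorem together with $\tau(\rho_n(a))\to\tau(a)$ (Lemma \ref{omegan}). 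Your route buys something real: it establishes the existence of the residue for \emph{every} $a\in\ca_\infty$ in one stroke, whereas the paper's density step tacitly presumes that the residue functional is defined and norm-continuous off the dense subalgebra --- a point the paper asserts rather than verifies; conversely, the paper's route keeps the dense-set computation exact, with no Abelian/limiting input. A minor bonus of your computation: your constant $\tfrac{6}{\log2}\tau(a)$ versus the paper's $\tfrac{3}{\log2}\tau(a)$ differ by the factor $2=\tr_E(m\mapsto e_{ii}m)$, the rank of the projection induced on $E$, which the paper's displayed sum silently drops by identifying $\tr_{\ch_j}\pi_j(b)$ with the matrix trace $\tr\rho_j(b)$; this is immaterial since the statement is only up to a positive constant, but your bookkeeping is the accurate one.
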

\begin{proof}
The bounded commutator property follows by Proposition \ref{QMS}.
The compact resolvent property and the computation of the metric dimension depend only on the spectrum of $D$, so they  follow by the analogous results for the triple on $C(K)$.

We now study the trace formula. By definition of the harmonic extension $\f_0:\ca_0\to\ca_1\subset\bc^3\otimes\bc^3$ we get $\tr\f_0(x)=3\tr x$, hence for $\f_n:\ca_n \to \ca_{n+1}$ we get $\tr\f_n(x)=3\tr x$ and for $\f_{n,n+k}:\ca_n \to \ca_{n+k}$ we get $\tr\f_{n,n+k}(x)=3^k\tr x$. We now fix $n\geq0$, $a\in\ca_n$ and consider $b=\f_{n,\infty}(a)\in\ca_\infty$. We have $\tr\r_{n+k}(b)= \tr\r_{n+k}(\f_{n,\infty}(a))= \tr\r_{n+k}(\f_{n+k,\infty}(\f_{n,n+k}(a)))=\tr \f_{n,n+k}(a)=3^k\tr a$. Then,
\begin{align*}
\Res_{s=d}\tr(b|D|^{-s})
&=\Res_{s=d}\left(\sum_{j=0}^{n-1}2^{-sj}\tr\r_j(b)+\sum_{k=0}^\infty2^{-s(n+k)} \tr\r_{n+k}(b)\right)\\
&=\Res_{s=d}2^{-sn}\tr a\sum_{k=1}^\infty e^{(\log3-s\,\log2)k}
=\frac{3^{-n}}{\log2}\tr a
\end{align*}
Therefore, for the normalized trace $\t$ on $\UHF(3^\infty)$, which coincides with $3^{n+1}\tr$ on $\ca_n$, and for any $c\in\ca_\infty$, $a=\r_n(c)$, we get
$$
\Res_{s=d}\tr(\f_{n,\infty}(\r_n(c))|D|^{-s})=\frac3{\log2}\tau(\r_n(c)).
$$
Since, for any $c\in\ca_\infty$, $\f_{n,\infty}(\r_n(c))\to c$ in $\ca_\infty$ and $\r_n(c)\to c$ in $\UHF(3^\infty)$, and both functionals are bounded, we get the thesis.

As for the energy, recall that $\ca_n \subset M_3(\bc)^{\otimes n}\otimes\bc^3$, $\ch_n=(\bc^3)^{\otimes n}\otimes E$. Then, a generic element of $\ca_n$ may be written as $\sum_{i=1,2,3}a_i\otimes e_{ii}$. Then
$$
[D_n,\sum_{i=1,2,3}a_i\otimes e_{ii}]x\otimes e=
2^n\sum_{i=1,2,3}a_i x\otimes \left(e^Te_{ii}-e_{ii}e^T\right)
$$
hence
\begin{align*}
\tr\left(\big|[D_n,\sum_{i=1,2,3}a_i\otimes e_{ii}]\big|^2\right)
&=2^{2n}\sum_{|\s|=n,
\,j\ne k}
\| \sum_{i=1,2,3}
a_i \s\otimes \left(e_{kj}e_{ii}-e_{ii}e_{kj}\right)\|^2\\
&=2^{2n}\sum_{|\s|=n,
\,j\ne k}
\| (a_j-a_k)\s\otimes e_{kj}\|^2
\\&
=2^{2n}\sum_{j\ne k}
\tr(| (a_j-a_k)|^2)=2^{2n}\ce_n[\sum_{i=1,2,3}a_i\otimes e_{ii}].
\end{align*}
Finally, when $a$ has finite energy,
\begin{align*}
\Res_{s=\delta} \tr\big( |[D,\pi(a)]|^2|D|^{-s} \big)
& = \Res_{s=\delta} \sum_n e^{((2-s) \log2-\log\frac53)n} \left(\frac53\right)^n\ce_n[\r_n(a)]
= \frac1{\log2} \ce_\infty[a].
\end{align*}
\end{proof}

\begin{ack}
This work has been partially supported by GNAMPA, MIUR, the European Networks ``Quantum Spaces - Noncommutative Geometry" HPRN-CT-2002-00280, and ``Quantum Probability and Applications to Physics, Information and Biology'', LYSM Laboratoire Ypatia Science Mathematiques-Laboratorio Ypatia Scienze Matematiche and  the ERC Advanced Grant 227458 OACFT ``Operator Algebras and Conformal Field Theory".
D. G. and T. I. acknowledge the MIUR Excellence Department Project awarded to the Department of Mathematics, University of Rome Tor Vergata, CUP E83C18000100006 and the University of Rome Tor Vergata funding scheme ``Beyond Borders'', CUP E84I19002200005.
\end{ack}


\end{document}